\newcommand{\change}[1]{{ #1}}
\newcommand{\delete}[1]{{ }}
\newcommand*{\TitleFont}{%
      \usefont{\encodingdefault}{\rmdefault}{b}{n}%
      \fontsize{16}{20}%
      \selectfont}
\title{\TitleFont  Integrated density of states of the Anderson Hamiltonian with two-dimensional white noise}
\author{Toyomu Matsuda \footnote{Institut für Mathematik, Freie Universität Berlin, Arnimallee 7, 14195 Berlin, Germany; \\
\hspace*{1.7em}E-mail: \href{mailto:toyomu.matsuda@fu-berlin.de}{toyomu.matsuda@fu-berlin.de}}}
\date{}
\begin{document}
\maketitle
\begin{abstract}
  We construct the integrated density of states of the Anderson Hamiltonian with two-dimensional white noise
  by proving the convergence of the Dirichlet eigenvalue counting measures associated with the Anderson Hamiltonians on the boxes.
  We also determine the logarithmic asymptotics of the left tail of the integrated density of states.
  Furthermore, we apply our result to a moment explosion of the parabolic Anderson model in the plane.

  \vspace{\topsep}
  \noindent
  \href{https://mathscinet.ams.org/mathscinet/msc/msc2020.html}{\emph{MSC 2020}} -
                         60H25, 
                         60H20, 
                         35J10, 
                         35P20, 
                         82B44. 
  \\
  \noindent
   \emph{Keywords} - integrated density of states, Anderson Hamiltonian, parabolic Anderson model, white noise,
   Lifshitz tails, self-intersection local time.
\end{abstract}
\tableofcontents
\section{Introduction}
{\bfseries Background.} The time evolution of a particle in quantum mechanics is described by a Schrödinger operator,
often given by $-\frac{1}{2} \Delta - V$ where $\Delta = \sum_{j=1}^d \partial_j^2$ is a Laplacian and $V$ is a potential.
We are interested in \emph{random Schrödinger operators}, also called \emph{Anderson Hamiltonians}, where the potentials are random.
Such operators appear in models where disorder need to be taken into consideration.
Mathematical study of random Schrödinger operators has a long history and is still active.
We refer the reader \change{for overview} to \change{Carmona and Lacroix} \cite{carmona1990spectral}, \change{Kirsch} \cite{kirsch_random_2008} and \change{Aizenman and Warzel} \cite{aizenman2015random}.

Anderson Hamiltonians can also be used to describe heat flow in disordered media.
The \emph{parabolic Anderson model} (PAM) is the \emph{stochastic partial differential equations} (SPDEs) of the following form:
\begin{equation*}
  \begin{cases}
    \partial_t u(t, x) = \frac{1}{2} \Delta u(t, x) + V(x) u(t, x) \quad \mbox{in } (0, \infty) \times X, \\
    u(0, \cdot) = u_0,
  \end{cases}
\end{equation*}
where $X$ is the space for the spatial variable $x$ and can be either discrete or continuous.
Then, the solution $u$ is described by the semigroup associated with the Anderson Hamiltonian $-\frac{1}{2} \Delta - V$.
The PAM is one of the most fundamental SPDEs and attracted attention from many mathematicians, \change{see the monograph König} \cite{konig2016parabolic}. a

The \emph{integrated density of states} (IDS) is  a fundamental object in the theory of random Schrödinger operators.
It heuristically measures the number of energy levels per unit volume below a given energy.
Given an Anderson Hamiltonian on $\R^d$, the IDS is defined as the limit
\begin{equation}\label{eq:convergence_to_IDS}
  N(\lambda) \defby \lim_{L \to \infty} \frac{1}{L^d} \sum_{n=1}^{\infty} \indic_{(-\infty, \lambda_{n,L}]}(\lambda),
\end{equation}
where $\{\lambda_{n, L}\}_{n=1}^{\infty}$ is the eigenvalues of the Anderson Hamiltonian on
$[-\frac{L}{2}, \frac{L}{2}]^d$.
It is a natural question whether the limit exists and is not trivial.
This question is positively answered for various models, see the references above, \change{Veselic} \cite{veselic2007existence} or \change{Kirsch and Metzger} \cite{ids2007}.
We note that properties of the IDS have been studied more than its construction.
For instance, the tail behavior of the IDS $N(\lambda)$ as $\lambda$ tends to the bottom of the spectrum, called \emph{Lifshitz tails},
has been extensively investigated for various models.

\delete{A natural choice of the random potential is an entirely uncorrelated one, namely white noise.
While such choice is common in discrete settings,
this is not the case in continuous settings.}
\change{In this paper, we are interested in white noise potential, that is, centered $\delta$-correlated Gaussian field, in continuous settings.
\emph{Formally}, this can be viewed as a limiting model of discrete Anderson Hamiltonian with i.i.d. potential.}
The problem is that white noise has low regularity and
can be realized only as a distribution-valued random variable.
Therefore, it is already not obvious to make sense of the Anderson Hamiltonian, let alone to construct the IDS.
Nevertheless, \change{Fukushima and Nakao} \cite{fukushima_spectra_1977}
constructed the IDS of the Anderson Hamiltonian with one-dimensional white noise by using the theory of Dirichlet forms.
Moreover, it rigorously proved the result from physicists
that the IDS has an exact form, which enables us to study the Lifshitz tails.
For later development of the Anderson Hamiltonian with one-dimensional white noise,
see \change{Dumaz and Labbé} \cite{dumaz_localization_2020} and the references given in its introduction.

Mathematical treatment of continuous Anderson Hamiltonians with white noise in higher dimensions
had been out of reach until the theory of regularity structures \cite{hairer_theory_2014} \change{of Hairer}
and the theory of paracontrolled distributions \cite{gubinelli_paracontrolled_2015} \change{of Gubinelli, Imkeller and Perkowski} emerged.
Both theories provide rigorous approaches to so-called \emph{singular SPDEs}, SPDEs inaccessible by classical approaches
due to the presence of ill-defined nonlinear terms or products.
For examples, they enable us to solve the PAM with white noise in two and three dimensions,
as shown in \cite{gubinelli_paracontrolled_2015}
and \change{Hairer and Labbé} \cite{hairer_Labbe_2015}, \cite{hairer_multiplicative_2018}.
There, the solution of the PAM is given by
the limit in probability of the solution of the PAM with the renormalized potential $V_{\epsilon} - c_{\epsilon}$
as $\epsilon \to 0+$, where
$V_{\epsilon}$ is smoothed white noise and $c_{\epsilon}$ is an appropriately chosen constant diverging to infinity.
It is believed that the PAM with white noise in four and higher dimensions cannot be solved.

Inspired by these theories, \change{Allez and Chouk} \cite{allez2015continuous} constructed continuous Anderson Hamiltonians with white noise on
two-dimensional toruses, followed by \change{Labbé} \cite{labbe_continuous_2019} and \change{Gubinelli, Ugurcan and Zachhuber} \cite{gubinelli_semilinear_2020}
to construct continuous Anderson Hamiltonians
with white noise in three dimensions and
by \change{Mouzard} \cite{mouzard2020weyl} to construct Anderson Hamiltonians on two-dimensional manifolds.
These works showed that the spectrum of the Anderson Hamiltonian consists of the eigenvalues.
Asymptotic behavior of the eigenvalues as the underlying boxes grow is studied in \change{Chouk and van Zuijlen} \cite{chouk2020asymptotics}, which is
crucially used in \change{König, Perkowski and van Zuijlen} \cite{konig2020longtime}
to prove the long time asymptotics of the mass of the two-dimensional PAM.
We also remark that
the work \cite{gubinelli_semilinear_2020} studies nonlinear wave equations and nonlinear Schrödinger equations whose
linear parts are given by the Anderson Hamiltonian with white noise in two and three dimensions.

\vspace{\topsep}
\noindent
{\bfseries Main result.}
The aim of this paper is to construct the IDS of the Anderson Hamiltonian with white noise in two dimensions and
determine the Lifshitz tail asymptotics.
Let $\{\lambda_{n,L}\}_{n=1}^{\infty}$ be the spectrum of the Anderson Hamiltonian on $Q_L \defby [-\frac{L}{2}, \frac{L}{2}]^2$
with Dirichlet boundary conditions. We denote by $N_L(\lambda)$
the distribution function $L^{-2} \sum_{n=1}^{\infty} \indic_{\{\lambda_{n,L} \leq \lambda\}}$.
The main theorem of this paper is the following:
\begin{theorem}\label{thm:main}
  Almost surely, the measures $\{N_L(d\lambda)\}_L$ converge vaguely to some limit $N(d\lambda)$.
  The measure $N(d\lambda)$ is deterministic and if we set $N(\lambda) \defby N((-\infty, \lambda])$,
  \begin{equation}\label{eq:lifshitz_tails_white_noise}
    \lim_{\lambda \to -\infty} \frac{1}{-\lambda} \log N(\lambda) = -\kappa^{-1},
  \end{equation}
  where $\kappa$ is the best constant of Ladyzhenskaya's inequality (a special case of the Gagliardo-Nirenberg inequality):
  \begin{equation}\label{eq:GN_ineq}
     \int_{\R^2} \abs{f(x)}^4 dx
    \leq C \int_{\R^2} \abs{f(x)}^2 dx \int_{\R^2} \abs{\nabla f(x)}^2 dx.
  \end{equation}
\end{theorem}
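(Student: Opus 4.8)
\emph{Strategy.} The plan is to reduce the whole statement to the Laplace transform of $N_L(d\lambda)$. Writing $\mathcal{H}_L$ for the Anderson Hamiltonian on $Q_L$ (it has compact resolvent, so $e^{-t\mathcal{H}_L}$ is trace class for every $t>0$), one has the identity
\begin{equation*}
  \int_{\R} e^{-t\lambda}\, N_L(d\lambda) \;=\; \frac{1}{L^2}\,\mathrm{Tr}\!\left(e^{-t\mathcal{H}_L}\right),\qquad t>0 .
\end{equation*}
I would prove, in order: (i) almost sure convergence of the right-hand side to a \emph{deterministic} limit $\varphi(t)$ for $t$ in a suitable range, which upgrades to the vague convergence $N_L(d\lambda)\to N(d\lambda)$ with $\int e^{-t\lambda}N(d\lambda)=\varphi(t)$; and (ii) the identification of the threshold beyond which $\varphi(t)=\infty$, from which the left tail of $N$ is extracted. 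The bridge between the operator side and the probabilistic side is the Feynman--Kac formula for $\mathrm{Tr}(e^{-t\mathcal{H}_L})$, which after integrating out the white noise produces an exponential moment of the renormalised self-intersection local time of planar Brownian motion.

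\emph{Step 1 (convergence and determinism).} Since $\mathcal{H}_L$ carries Dirichlet conditions, inserting interior walls only raises eigenvalues, so the functional $A\mapsto\mathrm{Tr}(e^{-t\mathcal{H}_A})$ is superadditive in the box; as the white noise, together with the finitely many renormalised products entering the construction of $\mathcal{H}$, is stationary and ergodic under translations, the Akcoglu--Krengel superadditive ergodic theorem gives, for each fixed $t$ with $\sup_L L^{-2}\mathbb{E}[\mathrm{Tr}(e^{-t\mathcal{H}_L})]<\infty$,
\begin{equation*}
  \frac{1}{L^2}\,\mathrm{Tr}\!\left(e^{-t\mathcal{H}_L}\right)\;\longrightarrow\;\varphi(t):=\sup_{L}\frac{1}{L^2}\,\mathbb{E}\!\left[\mathrm{Tr}\!\left(e^{-t\mathcal{H}_L}\right)\right]\qquad\text{almost surely and in }L^1 .
\end{equation*}
The requisite finiteness (and the trace-class property above) comes from the resolvent and heat-semigroup bounds provided by the paracontrolled (or regularity-structures) construction of $\mathcal{H}$. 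Running this over a countable dense set of admissible $t$, and using $N_L((-\infty,a])\le e^{ta}L^{-2}\mathrm{Tr}(e^{-t\mathcal{H}_L})$ to obtain vague precompactness of $\{N_L\}$ and to exclude escape of mass to $-\infty$, one gets that almost surely $N_L(d\lambda)\to N(d\lambda)$ vaguely, where $N$ is the unique measure whose bilateral Laplace transform on that range equals the deterministic function $\varphi$; in particular $N$ is deterministic. (Superadditivity alone suffices here; no Neumann bracketing is needed.)

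\emph{Step 2 (identifying $\varphi$; the upper bound on the tail).} Applying Feynman--Kac to $\mathrm{Tr}(e^{-t\mathcal{H}_L})$, dividing by $L^2$, using translation invariance and letting $L\to\infty$ yields $\varphi(t)=c(t)\,\mathbb{E}[e^{\gamma_t}]$ for an explicit finite positive $c(t)$, where $\gamma_t$ is the renormalised self-intersection local time of a Brownian bridge of length $t$ --- the finite limit of $\tfrac12\int_0^t\!\int_0^t\rho_\varepsilon(B_s-B_u)\,ds\,du$ minus its divergent mean as $\rho_\varepsilon\to\delta_0$. By the large-deviation analysis of this functional (via the Donsker--Varadhan principle for the Brownian occupation measure, or the method of moments), together with the fact that in dimension two the scaling $f\mapsto\sigma f(\sigma\,\cdot)$ multiplies \emph{both} $\|\nabla f\|_{L^2}^2$ and $\|f\|_{L^4}^4$ by the same factor $\sigma^2$ --- so the governing variational problem is scale invariant and displays a sharp dichotomy --- one has
\begin{equation*}
  \mathbb{E}\!\left[e^{\gamma_t}\right]<\infty \iff t<t_c,\qquad t_c:=\inf\!\left\{\,\|\nabla f\|_{L^2}^2/\|f\|_{L^4}^4\ :\ \|f\|_{L^2}=1\,\right\}=\kappa^{-1},
\end{equation*}
the last equality being precisely the definition of the best constant in \eqref{eq:GN_ineq}. (This is in essence the classical sharp exponential-integrability / critical-exponent result for the renormalised planar self-intersection local time, where the critical exponent equals the reciprocal of the Ladyzhenskaya constant.) Thus $\varphi$ is finite on $(0,\kappa^{-1})$, and Chebyshev's inequality gives $N(\lambda)\le e^{-t\lambda}\varphi(t)$ for all $t<\kappa^{-1}$, whence $\limsup_{\lambda\to-\infty}(-\lambda)^{-1}\log N(\lambda)\le-\kappa^{-1}$.

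\emph{Step 3 (the matching lower bound --- the main obstacle).} For the reverse inequality I would show that a unit cell contains, with probability $e^{-|\lambda|/\kappa+o(|\lambda|)}$, a deep localised well of the potential supporting an eigenvalue of $\mathcal{H}$ below $\lambda$; as such events are essentially independent across distinct unit cells, this forces $N(\lambda)\ge e^{-|\lambda|/\kappa-o(|\lambda|)}$ and hence $\liminf_{\lambda\to-\infty}(-\lambda)^{-1}\log N(\lambda)\ge-\kappa^{-1}$, which with Step 2 proves \eqref{eq:lifshitz_tails_white_noise}. Quantitatively: take a near-optimiser $f_\ast$ of \eqref{eq:GN_ineq} with $\|f_\ast\|_{L^2}=1$, put $\psi_\sigma:=\sigma f_\ast(\sigma\,\cdot)$ (supported in a box of side $O(\sigma^{-1})$), and test the quadratic form of $\mathcal{H}$ against $\psi_\sigma$: up to the renormalisation constant and the paracontrolled corrections, $\langle\mathcal{H}\psi_\sigma,\psi_\sigma\rangle\approx\tfrac12\sigma^2\|\nabla f_\ast\|_{L^2}^2-\langle\xi,\psi_\sigma^2\rangle$, with $\langle\xi,\psi_\sigma^2\rangle$ a centred Gaussian of variance $\|\psi_\sigma\|_{L^4}^4=\sigma^2\|f_\ast\|_{L^4}^4=\kappa\,\sigma^2\|\nabla f_\ast\|_{L^2}^2$. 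Imposing $\langle\mathcal{H}\psi_\sigma,\psi_\sigma\rangle\le\lambda$ and optimising the resulting Gaussian probability over $\sigma$ leads to $\sigma^2\|\nabla f_\ast\|_{L^2}^2\sim 2|\lambda|$ and to the exponent $-|\lambda|/\kappa$; summing over the order $L^2\sigma^2$ disjoint sub-boxes in $Q_L$ then produces at least of order $\sigma^2 e^{-|\lambda|/\kappa}$ eigenvalues below $\lambda$ per unit volume. The difficulty is to make this rigorous for the \emph{singular} operator $\mathcal{H}$: one must take $\psi_\sigma$ in the paracontrolled form domain, control the renormalisation constant and the paracontrolled corrections to the naive quadratic form \emph{uniformly as} $\sigma\to\infty$, and establish the lower bound on the rare well-event while keeping those nonlinear corrections subleading --- a small-deviation estimate considerably more delicate than in the classical Lifshitz-tail theory with function-valued potentials. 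I expect Step 3, not Steps 1--2, to be the crux of the argument.
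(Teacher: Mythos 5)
Your Steps 1--2 follow essentially the paper's strategy (Laplace transform of $N_L$, Feynman--Kac, reduction to the exponential moment of a renormalised self-intersection local time with critical exponent $\kappa^{-1}$, Chebyshev for the upper tail bound), with two differences worth noting. First, the paper does not invoke the Akcoglu--Krengel superadditive ergodic theorem on $\mathrm{Tr}(e^{-t\H_L})$: it writes $L^{-2}\mathrm{Tr}(e^{-t\H_L})=L^{-2}\int_{Q_L}u_L^{x,0}(t,x)\,dx$, compares $u_L$ with the full-plane solution $u_\infty^{0,x}(t,0)$, and applies the ordinary additive ergodic theorem; this sidesteps a real wrinkle in your route, namely that the enhanced noise $\bm{\xi}_L^{\n}$ (and hence $\H_L$) depends on the box $L$ through $\sigma_L(D)$, so the restriction of $\H_L$ to a sub-box is not the Anderson Hamiltonian of that sub-box and superadditivity is not immediate for the singular operator -- the paper only uses Dirichlet superadditivity at the level of the smooth approximations $\xi_\epsilon$ and then passes to the limit. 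Second, because the ergodic argument needs second moments, the direct convergence of the Laplace transform is only obtained for $t\in(0,(2\kappa)^{-1})$, and the identity is then extended to $(0,\kappa^{-1})$ by analytic continuation; your claim of convergence "for $t$ in a suitable range" would need this extra step. Also, the sharp exponential integrability you quote is classical for the Brownian motion's SILT; the paper must (and does, in its Section 3) transfer it to the Brownian bridge, since the trace forces bridge endpoints.

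The genuine gap is Step 3, and you have correctly diagnosed it as the crux but not closed it. The well-construction you sketch -- testing the quadratic form against $\psi_\sigma=\sigma f_*(\sigma\cdot)$, controlling the renormalisation constant and paracontrolled corrections uniformly as $\sigma\to\infty$, and performing the Gaussian small-deviation estimate -- is precisely the hard analysis that is \emph{not} redone in this paper. Instead, the paper combines two ingredients: (a) the comparison $N(\lambda)\ge\wnexpect[N_1(\lambda)]\ge\wnP(\lambda_{1,1}\le\lambda)$, obtained from Dirichlet superadditivity at the level of the mollified operators plus vague convergence of $N_{\infty,\epsilon}$ to $N$ (Lemma~\ref{lem:Dirichlet_comparisons}); and (b) the already-established tail asymptotics of the principal eigenvalue of the Anderson Hamiltonian on a fixed box from Chouk and van Zuijlen \cite{chouk2020asymptotics}, which yields $\wnP(\lambda_{1,1}\le\lambda)\ge 1-e^{c\lambda e^{\mu\lambda}}$ for any $\mu>\kappa^{-1}$ and hence $\liminf_{\lambda\to-\infty}(-\lambda)^{-1}\log N(\lambda)\ge-\mu$. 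Without either importing that eigenvalue tail estimate or actually carrying out your test-function scheme in the paracontrolled form domain, your argument does not establish the lower bound in \eqref{eq:lifshitz_tails_white_noise}, so as written the proof of the Lifshitz-tail asymptotics is incomplete.
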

The proof is given in Theorem \ref{thm:convergence_to_IDS} and Theorem \ref{thm:lifshitz_tails_white_noise}.
The constant $\kappa$ has appeared in many related works. For instance, the work \cite{bass_self-intersection_2004} \change{of Bass and Chen} shows that
we have
\begin{equation*}
  \lim_{t \to \infty} \frac{1}{t} \log \P(\gamma([0,1]_{\leq}^2) \geq t) = -\kappa^{-1},
\end{equation*}
where $\gamma$ is the \emph{self-intersection local time} (SILT) of the two-dimensional Brownian motion and
$[0,1]_{\leq}^2 \defby \set{(s,t) \in [0,1]^2 \given s \leq t}$.
In \cite{chouk2020asymptotics}, the constant $\kappa$ is used to describe the eigenvalue asymptotics of the Anderson Hamiltonian
with two-dimensional white noise.
Both results will be crucially used in this work.

The moment of the PAM and the exponential moment of the SILT are related through the Feynman-Kac formula.
By exploiting this relation, \change{Gu and Wu} \cite{gu_moments_2018} proved that the PAM with two-dimensional white noise has
finite moment for small $t$ and conjectured that the moment explodes for large $t$.
As an easy application of Theorem \ref{thm:main} and ideas from its proof, the following is proved,
whose proof is given in Corollary \ref{cor:moment_of_pam}.
\begin{corollary}\label{cor:main}
  Let $u = u(t, x)$ be the solution of the PAM in $\R^2$ with white noise potential and with $u(0, \cdot) = \delta_0$.
  Then,
  \begin{equation*}
    \wnexpect[ u(t, 0) ]
    \begin{cases}
      < \infty \quad &\mbox{if } t \in (0, \kappa^{-1}), \\
      = \infty \quad &\mbox{if } t \in (\kappa^{-1}, \infty).
    \end{cases}
  \end{equation*}
\end{corollary}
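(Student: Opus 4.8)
The plan is to connect the moment $\wnexpect[u(t,0)]$ to the integrated density of states $N(\lambda)$ via a spectral representation of the heat semigroup, and then feed in the tail asymptotics \eqref{eq:lifshitz_tails_white_noise}. First I would recall that, by the Feynman–Kac formula for white noise potential, $u(t,0) = (e^{t\mathcal{H}}\delta_0)(0) = p_t^{\mathcal{H}}(0,0)$ where $\mathcal{H} = \frac12\Delta + V$ is the Anderson Hamiltonian on $\R^2$ and $p_t^{\mathcal{H}}$ its heat kernel; more convenient for the moment computation is to work on the box $Q_L$ with Dirichlet conditions and use the eigenfunction expansion $p_t^{\mathcal{H},L}(x,x) = \sum_{n=1}^\infty e^{t\lambda_{n,L}} \varphi_{n,L}(x)^2$. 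Integrating over $x \in Q_L$ and taking expectation,
\begin{equation*}
  \wnexpect\Bigl[\int_{Q_L} p_t^{\mathcal{H},L}(x,x)\,dx\Bigr]
  = \wnexpect\Bigl[\sum_{n=1}^\infty e^{t\lambda_{n,L}}\Bigr]
  = L^2 \int_{\R} e^{t\lambda}\, \wnexpect[N_L(d\lambda)],
\end{equation*}
so the finiteness of the moment is governed by the integrability of $e^{t\lambda}$ against the (expected) eigenvalue counting measure near $\lambda = -\infty$. One then wants to pass $L\to\infty$ using Theorem~\ref{thm:main}: since $N_L(d\lambda) \to N(d\lambda)$ and, by \eqref{eq:lifshitz_tails_white_noise}, $N(\lambda) \sim e^{-\lambda/\kappa + o(\lambda)}$ as $\lambda\to-\infty$, the integral $\int_{\R} e^{t\lambda} N(d\lambda)$ converges precisely when $t < \kappa^{-1}$ and diverges when $t > \kappa^{-1}$. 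This heuristic already pins down the two regimes; the work is to make the translation between $\wnexpect[u(t,0)]$ and $\int e^{t\lambda}N(d\lambda)$ rigorous and uniform in $L$.

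For the finiteness claim ($t < \kappa^{-1}$): I would establish an upper bound of the form $\wnexpect[u(t,0)] \le C \limsup_L \int e^{t\lambda}\wnexpect[N_L(d\lambda)]$, relating the full-space kernel at a point to the box-averaged trace (e.g.\ by monotonicity of Dirichlet heat kernels in the domain, taking $x=0$ in the center of $Q_L$, or directly via a Feynman–Kac bound), and then show $\limsup_L \int_{-\infty}^{0} e^{t\lambda}\wnexpect[N_L(d\lambda)] < \infty$ for $t<\kappa^{-1}$. The latter would follow from a uniform-in-$L$ version of the upper Lifshitz tail estimate; rather than re-deriving it, I would invoke the eigenvalue asymptotics of \cite{chouk2020asymptotics} (which already give, uniformly, $\wnexpect[e^{t\lambda_{1,L}}] \le$ something like $e^{C L^2 \cdot o(1)}$ or a polynomial bound once divided by $L^2$) together with the tail of the exponential moment of the self-intersection local time from \cite{bass_self-intersection_2004} fed through Feynman–Kac. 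Alternatively, and perhaps more cleanly, I would go directly: by Feynman–Kac, $\wnexpect[u(t,0)] = \wnexpect_{BM}[ \wnexpect_V[ \exp(\text{Wick-renormalized } \int_0^t V(B_s)\,ds)]] = \wnexpect_{BM}[\exp(\tfrac12 \gamma_t)]$ where $\gamma_t$ is (the renormalized) self-intersection local time of Brownian bridge/motion up to time $t$; then the Bass–Chen asymptotics $\frac1T\log\P(\gamma([0,1]_\le^2)\ge T)\to -\kappa^{-1}$, rescaled to time $t$ (Brownian scaling gives $\gamma$ on $[0,t]$ distributed as $t\cdot\gamma$ on $[0,1]$ up to a logarithmic correction from renormalization), yields $\wnexpect[\exp(\tfrac12\gamma_t)] = \wnexpect[\exp(\tfrac{t}{2}\gamma_1 + \dots)] < \infty \iff t/2 < (2\kappa)^{-1} \iff t < \kappa^{-1}$. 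I expect this direct route to be the shorter one for the finite half.

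For the explosion claim ($t > \kappa^{-1}$): here the spectral representation is the right tool, because lower bounds on heat kernels at a point from Feynman–Kac are delicate, whereas $p_t^{\mathcal{H},L}(x,x) \ge e^{t\lambda_{1,L}}\varphi_{1,L}(x)^2$ gives a clean lower bound driven by the top eigenvalue. Integrating, $\wnexpect[\int_{Q_L} p_t^{\mathcal{H},L}(x,x)dx] \ge \wnexpect[e^{t\lambda_{1,L}}]$, and by \cite{chouk2020asymptotics} the top eigenvalue $\lambda_{1,L}$ grows like (a constant times) $\log L$ or faster with high probability — more precisely one has a lower bound on $\P(\lambda_{1,L} \ge c\log L)$ — so $\wnexpect[e^{t\lambda_{1,L}}] \gtrsim L^{ct}$, which combined with $\int e^{t\lambda}\wnexpect[N_L(d\lambda)] = L^{-2}\wnexpect[\sum_n e^{t\lambda_{n,L}}] \ge L^{-2}\wnexpect[e^{t\lambda_{1,L}}]$ diverges as $L\to\infty$ once $ct > 2$. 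To turn a divergent box quantity into divergence of the full-space moment $\wnexpect[u(t,0)]$ I would again use domain monotonicity (the full-space kernel dominates the Dirichlet one) and the fact that by stationarity of the noise I can place the box anywhere, so $\wnexpect[u(t,0)] = \wnexpect[p_t^{\mathcal{H}}(0,0)] \ge \wnexpect[p_t^{\mathcal{H},L}(0,0)]$ for every $L$, and the right side can be shown unbounded in $L$ by the eigenvalue bound (taking care that $\varphi_{1,L}(0)^2$ is not too small — this is where a bit of work is needed, perhaps using a Harnack-type estimate or averaging over translated boxes containing $0$).

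The main obstacle will be precisely this last point: converting statements about the box-averaged trace $\int_{Q_L} p_t^{\mathcal{H},L}(x,x)\,dx$ (which is what $N_L$ naturally controls) into statements about the pointwise kernel $p_t^{\mathcal{H}}(0,0) = u(t,0)$. For the finiteness half this is a genuine issue (an upper bound on the average does not bound the value at a point), which is why I favor bypassing it via the direct Feynman–Kac/self-intersection-local-time computation, where no spectral detour is needed and the Bass–Chen constant $\kappa$ enters transparently. For the explosion half the inequality goes the easy way (pointwise kernel $\ge$ Dirichlet kernel), and the only care needed is controlling the eigenfunction value $\varphi_{1,L}(0)$, which should be handled by a soft argument exploiting translation invariance of the white noise. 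Throughout, the renormalization constant $c_\epsilon$ in the definition of $\mathcal{H}$ only shifts the whole spectrum by a deterministic (divergent) amount in the approximation and drops out in the limit, so it does not affect the threshold $\kappa^{-1}$; I would remark on this rather than track it.
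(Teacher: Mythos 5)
Your strategy matches the paper's: the finite half via Feynman--Kac and the exponential moment of the self-intersection local time, and the explosion half via the trace/first-eigenvalue lower bound combined with the Chouk--van Zuijlen tail estimate (the paper records exactly this as an ``alternative'' proof of the blow-up, its primary route being $\int_{\R}e^{-t\lambda}N(d\lambda)\le\wnexpect[u(t,0)]$ together with the divergence of the Laplace transform of $N$ from Theorem~\ref{thm:convergence_to_IDS}). Two points where your sketch underestimates the work. First, because the initial condition is $\delta_0$ and you evaluate at $0$, the Feynman--Kac functional is driven by the Brownian \emph{bridge}, and the statement you need — that the renormalized bridge SILT $\zeta$ satisfies $\expect[e^{t\zeta}]<\infty$ exactly for $t<\kappa^{-1}$, uniformly in the mollification parameter $\epsilon$ so that Fatou applies — is \emph{not} a direct consequence of the Bass--Chen result for the Brownian motion; the conditioning degenerates near the terminal time, and the paper's Section~\ref{sec:brownian_bridge} (Theorem~\ref{thm:construction_of_SILT_of_BB}-(iii), via a decomposition of $[0,1]_{\le}^2$ isolating the corner $[0,\delta]\times[1-\delta,1]$ and a Girsanov comparison) exists precisely to supply this. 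Writing ``bridge/motion'' and invoking Brownian scaling glosses over the main technical input of the finite half. Second, for the explosion half your worry about $\varphi_{1,L}(0)^2$ disappears once you average correctly: either take expectations and use translation invariance of the white noise, so that $\wnexpect[u(t,0)]$ dominates $L^{-2}$ times the expected trace and hence $L^{-2}\wnexpect[e^{-t\lambda_{1,L}}]$, which is already infinite for fixed $L$ when $t>\kappa^{-1}$ by Lemma~\ref{lem:eigenvalue_tail_estimate}; or, as the paper does, apply the ergodic theorem to $x\mapsto u_{\infty}^{0,x}(t,0)$ (shifting the noise, not the point), which needs no integrability and no Harnack estimate — the latter would be delicate for this singular operator.
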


We now discuss the strategy of proving Theorem \ref{thm:main}.
A standard approach to the proof of \eqref{eq:convergence_to_IDS} is the path integral approach (\cite{nakao_spectral_1977},
\cite[Section VI.1.2]{carmona1990spectral}), namely the approach to prove
the convergence of the Laplace transform.
An advantage of this approach is to obtain a representation of the Laplace transform.
The convergence of the Laplace transform can be proved as a consequence of the ergodic theorem applied to
the heat kernel of the Anderson Hamiltonian. There, the Feynman-Kac formula plays an important role as a technical tool.
However, in our setting, what the Feynman-Kac formula for the PAM with white noise should be is not obvious, since the white noise is not a function.
Although the work \cite{konig2020longtime} derived a Feynman-Kac representation for the PAM on a box, this is not useful for our problem.
To overcome the lack of useful Feynman-Kac formula, we view the heat kernel as the solution of the PAM with initial condition $\delta_0$ and
use moment estimate of the solution as technical input to apply the ergodic theorem.
For the moment estimate, we exploit the SILT in the spirit of \cite{gu_moments_2018}.
However, since we have to work on the PAM with initial condition $\delta_0$, we have to use the SILT of Brownian bridges.
It will be revealed that basic properties do not change between the SILT of the Brownian motion and that of Brownian bridges.

The proof of \eqref{eq:lifshitz_tails_white_noise} consists of two steps. We first show
\begin{equation*}
  \int_{\R} e^{-t\lambda} N(d\lambda)
  \begin{cases}
    < \infty, \quad &\mbox{if } t \in (0, \kappa^{-1}), \\
    = \infty, \quad &\mbox{if } t \in (\kappa^{-1}, \infty),
  \end{cases}
\end{equation*}
by applying the estimate of the exponential moment of the SILT of a Brownian bridge.
This readily implies $\limsup_{\lambda \to -\infty} (-\lambda)^{-1} \log N(\lambda) = -\kappa^{-1}$.
We next show
\begin{equation}\label{eq:liminf_lifshitz_tails_white_noise}
  \liminf_{\lambda \to -\infty} (-\lambda)^{-1} \log N(\lambda) \geq - \kappa^{-1}
\end{equation}
by using the tail estimate of
the first eigenvalue of the Anderson Hamiltonian obtained in \cite{chouk2020asymptotics}.

\change{
\begin{remark}\label{rem:gaussian_lifshitz}
  Let us describe the relation between our result and those of other Gaussian potentials. Fukushima and Nakao
  \cite{fukushima_spectra_1977} showed that the IDS $N^{\operatorname{1D}}$ with one-dimensional white noise satisfies
  $\log N^{\operatorname{1D}}(\lambda) \sim -c^{\operatorname{1D}} (-\lambda)^{\frac{3}{2}}$ as $\lambda \to -\infty$
  for some constant $c_{\operatorname{1D}} \in (0, \infty)$.
  As shown in Pastur~\cite{pastur_77}, Nakao~\cite{nakao_spectral_1977} and Kirsch and Martinelli~\cite{kirsch_ids_82},
  for a continuous centered Gaussain potential $U$, the IDS $N^U$ satisfies
  $\log N^U(\lambda) \sim - (2 \expect[U(0)^2])^{-1} (-\lambda)^2$.
  We will prove in Appendix~\ref{sec:lifshiz_tail_riesz} that for a centered Gaussian potential $V$ on $\R^d$ with
  \begin{equation*}
    \expect[V(x)V(y)] = \abs{x-y}^{-\alpha}, \quad \alpha < \min\{d, 2\},
  \end{equation*}
  the IDS $N^V$ satisfies $\log N^V(\lambda) \sim -c^V (-\lambda)^{\frac{4-\alpha}{2}}$.
  Thus, the decay of the Lifshitz tail is connected with the scaling property of the Gaussian potential.
  Our main result, Theorem~\ref{thm:main}, is distinct in that we establish this connection in the setting where renormalization is necessary.

  It turns out that leading coefficients of Lifshitz tails are also connected with covariances of Gaussian potentials. For this regard,
  see Remark~\ref{rem:variations}.

  For further topics such as restricted IDS and the Wegner estimate on Gaussian potentials, the reader is referred to the survey \cite{Leschke2005} by
  Leschke, Müller and Warzel.
\end{remark}
}

\vspace{\topsep}
\noindent
{\bfseries Structure of the paper. }Section \ref{sec:pam_and_ah} introduces notation and review basic results
on the PAM and the Anderson Hamiltonian with two-dimensional white noise.
In Section \ref{sec:brownian_bridge}, we study SILT of Brownian bridges.
The goal of this section is to extend some well-known results of the SILT of the
Brownian motion to those of Brownian bridges. The reader who is not interested in
technical aspects can skip this section.
Section \ref{sec:ids} is the main part of this
paper as it gives the proof of Theorem \ref{thm:main}.
In Appendix \ref{sec:lifshiz_tail_riesz}, we study the Lifshitz tails of the IDS with the Gaussian potential whose covariance is
given by a Riesz potential.
Appendix \ref{sec:gaussian} is for technical computations related to the white noise. Finally, Appendix \ref{sec:laplace_transform}
contains two elementary lemmas on Laplace transforms.

\vspace{\topsep}
\noindent
{\bfseries Notation. }
\begin{itemize}
  \item $\N_0 \defby \{0, 1, 2, \ldots\}$, $\N \defby \{1, 2, \ldots\}$.
  \item $[a,b]^2_{\leq} \defby \set{(s,t) \in [a, b]^2 \given s \leq t}$, $Q_L \defby [-\frac{L}{2}, \frac{L}{2}]^2$.
  \item $\delta_x$ is the Dirac's delta function: $\int_{\R^2} f(y) \delta_x(y) dy = f(x)$.
  \item $p_t(x) \defby \frac{1}{2\pi t} \exp(-\frac{\abs{x}^2}{2t})$, $t\in (0, \infty)$, $x \in \R^2$.
  \item $c_{\epsilon} \defby \frac{1}{2\pi} \log \epsilon^{-1}$.
  \item $B$ is the $d$-dimensional Brownian motion with $B_0 = 0$ and
        $X^t$ is the $d$-dimensional Brownian bridge with $X^t_0 = X^t_t = 0$.
        We set $d=2$ except for Appendix \ref{sec:lifshiz_tail_riesz}.
        We denote by $B_{[0,t]}$ the set $\set{B(s) \given s \in [0, t]}$ and use similar notation for $X^t$.
  \item We use the notation $\P$ and $\expect$ for the probability and the expectation with respect to $B$ or $X^t$.
        We use the notation $\wnP$ and $\wnexpect$ for the probability and the expectation with respect to the white noise.
\end{itemize}

\section{PAM and Anderson Hamiltonian}\label{sec:pam_and_ah}
In this section, we introduce notation and review fundamental results on the PAM and the Anderson Hamiltonian
with two-dimensional white noise.
Some technical results on the white noise are given in Appendix \ref{sec:gaussian}.
\begin{definition}\label{def:white_noise}
  We denote by $\S$  the space of Schwartz functions on $\R^2$ and denote its dual by $\S'$.
  \begin{enumerate}[(i)]
    \item Let $\P^{\operatorname{WN}}$ be the probability measure on $\S'$ under which $f \mapsto \inp{f}{\phi}$ is
    centered Gaussian with variance $\norm{\phi}_{L^2(\R^2)}$ for $\phi \in \S$.
    We set $\wnexpect[\cdot] \defby \expect_{\P^{\operatorname{WN}}}[\cdot]$ and
    denote by $\xi$ the identity map in $\S'$. The random variable $\xi$ is called the \emph{white noise} in $\R^2$.
    We set $\xi_{\epsilon} \defby \xi \conv p_{\frac{\epsilon}{2}}$,
    where $p$ is the heat kernel given in Notation.
    \item For $x \in \R^2$, we define the map $T_x: \S' \to \S'$ by $\inp{T_x f}{\phi} \defby \inp{f}{\phi(\cdot - x)}$.
  \end{enumerate}
\end{definition}
We introduce notation on function spaces from \cite{chouk2020asymptotics}.
\begin{definition}
  Let $L \in (0, \infty)$, $p, q \in [1, \infty]$ and $\alpha \in \R$.
  \begin{enumerate}[(i)]
    \item As defined in \cite[Definition 4.9]{chouk2020asymptotics}, we denote
    by $B_{p, q}^{\diri, \alpha}(Q_L)$ the Dirichlet Besov space
    and denote by
    $B_{p,q}^{\n, \alpha}(Q_L)$ be the Neumann Besov space.
    We write $\csp_L^{\#,\alpha} \defby B_{\infty, \infty}^{\#, \alpha}(Q_L)$ for
    $\# \in \{\diri, \n\}$.
    \item  Let $(\n_{k, L})_{k \in \N_0^2}$ be the Neumann basis of $L^2(Q_L)$ given by
      \begin{equation*}
        \n_{k,L}(x) \defby c_{k, L} \indic_{Q_L}(x) \cos\Big\{\frac{\pi k_1}{L} \Big(x_1 + \frac{L}{2} \Big)\Big\}
        \cos\Big\{\frac{\pi k_2}{L} \Big(x_2 + \frac{L}{2}\Big)\Big\}
        \quad x = (x_1, x_2) \in \R^2,
      \end{equation*}
    where $c_{k, L}$ is the normalizing constant and let $\sigma_L(D)$ be the Fourier multiplier defined by
    \begin{equation*}
      \sigma_L(D) f \defby \sum_{k \in \N^2_0} (1 + \pi^2 L^{-2} \abs{k}^2)^{-1} \inp{f}{\n_{k,L}} \n_{k,L}.
    \end{equation*}
    \item
    We write $\X_{L}^{\n, \alpha}$ for the space of enhanced Neumann potentials, defined as the closure of the set
    \begin{equation*}
      \set{(f, f \resonant \sigma_L(D) f - c) \given f \in \S_{\n}, c \in \R}
      \quad \mbox{in } \csp^{\n, \alpha}_{L}
    \times \csp^{\n, 2 \alpha+ 2}_{L},
    \end{equation*}
    where the symbol $\resonant$ is the resonant product \cite[Definition 4.24]{chouk2020asymptotics} and $\S_{\n}$ is the space of evenly extended smooth functions
    \cite[Definition 4.4]{chouk2020asymptotics}.
    \item For $\bm{\theta} \in \X_L^{\n, \alpha}$, we write the product $\bm{\theta} \diamond f$ as in
    Definition \cite[Definition 5.3]{chouk2020asymptotics}. The reader should be noted that the symbol $\diamond$ has nothing to do with the Wick product.
  \end{enumerate}
\end{definition}
The next definition is concerned with the enhanced white noise.
\begin{definition}\label{def:enhanced_white_noise}
  Recall the definition of the constant $c_{\epsilon}$ from Notation.
  \begin{enumerate}[(i)]
    \item Let $L \in (0, \infty)$ and $\epsilon \in (0, 1)$. We set
    \begin{equation*}
      T_x \xi_{L, \epsilon}^{\n} \defby \sum_{k \in \N_0^2} \inp{T_x \xi_{\epsilon}}{\n_{k,L}} \n_{k,L},
      \quad \xi^{\n}_{L, \epsilon} \defby T_0 \xi^{\n}_{L, \epsilon},
    \end{equation*}
    \begin{equation*}
      T_x \Xi_{L, \epsilon}^{\n} \defby T_x \xi^{\n}_{L, \epsilon} \resonant \sigma_L(D) T_x \xi^{\n}_{L, \epsilon} - c_{\epsilon},
      \quad \Xi_{L, \epsilon}^{\n} \defby T_0 \Xi_{L, \epsilon}^{\n},
    \end{equation*}
    $T_x \bm{\xi}_{L, \epsilon}^{\n} \defby (T_x \xi_{L, \epsilon}^{\n}, T_x \Xi_{L, \epsilon}^{\n})$
    and $\bm{\xi}_{L, \epsilon}^{\n} \defby T_0 \bm{\xi}_{L, \epsilon}^{\n}$.
    \item We fix a countable and unbounded set $\mathbb{I}$ of $[1, \infty)$ such that $rL \in \mathbb{I}$ for every
    $r \in \Q \cap [1, \infty)$ and $L \in \mathbb{I}$. By Proposition \ref{prop:unif_conv_of_enhanced_noise},
    there exist a set $\mathbb{J} \subseteq \Q \cap (0, 1)$ with $\inf \mathbb{J} = 0$ and a $\wnP$-full set $\S'_0 \subseteq \S'$
    such that for every $\omega \in \S_0'$, $L \in \mathbb{I}$, $M \in [1, \infty)$ and $\alpha \in (-\frac{4}{3}, -1)$,
    \begin{equation*}
      \lim_{\epsilon, \epsilon' \in \mathbb{J}, \epsilon, \epsilon' \to 0+} \sup_{x \in Q_M}
      \norm{T_x \bm{\xi}_{L, \epsilon}^{\n}(\omega) - T_x \bm{\xi}_{L, \epsilon'}^{\n}(\omega)}_{\X_{L}^{\n, \alpha}} = 0.
    \end{equation*}
    \item For $L \in \mathbb{I}$ and $x \in \R^2$, we set $T_x \bm{\xi}_L^{\n}(\omega) \defby \lim_{\epsilon \in \mathbb{J},
    \epsilon \to 0+} T_x \bm{\xi}_{L, \epsilon}^{\n}(\omega)$ on $\S'_0$
    and $\bm{\xi}_L^{\n} \defby T_0 \bm{\xi}_L^{\n}$.
    \end{enumerate}
\end{definition}

\begin{remark}
In the rest of this section and Section \ref{sec:ids},
we assume that $L$ and $\epsilon$ belong to $\mathbb{I}$ and $\mathbb{J}$ respectively and
that $\xi$ is the identity map in $\S'_0$, so that we will not be annoyed with exceptional $\wnP$-zero sets.
For instance, $\lim_{L \to \infty}$ means $\lim_{L \in \mathbb{I}, L \to \infty}$.
\end{remark}

Let $\alpha \in (-\frac{4}{3}, -1)$, $\gamma \in (-1, 2+\alpha)$, $p \in [1, \infty]$, $\bm{\theta} \in \X_{L}^{\n, \alpha}$
and $\phi \in B_{p, \infty}^{\diri, \gamma}(Q_L)$. The theory of paracontrolled distributions \cite{gubinelli_paracontrolled_2015} enables us
to solve the following PDE with Dirichlet boundary conditions:
\begin{equation*}
  \begin{cases}
    \partial_t u_L^{\phi, \bm{\theta}} =
    \frac{1}{2} \Delta u_L^{\phi, \bm{\theta}} + \bm{\theta} \diamond u_L^{\phi, \bm{\theta}}  \,\,
     \mbox{ in } (0, \infty) \times Q_L, \\
    u_L^{\phi, \bm{\theta}}(0, \cdot) = \phi, \quad \mbox{ and }\,\,\, u_L^{\phi, \bm{\theta}}(t, \cdot) = 0 \,\, \mbox{ on }
    (0, \infty) \times \partial Q_L.
  \end{cases}
\end{equation*}
See \cite[Section 2]{konig2020longtime} for details.
\begin{definition}
  For $L \in \mathbb{I}$, $\epsilon \in \mathbb{J}$, $x \in Q_L$ and $y \in \R^2$,
  we set
  \begin{equation*}
    u_{L, \epsilon}^{x,y} \defby u_L^{\delta_x, T_y \bm{\xi}_{L, \epsilon}^{\n}} \quad \mbox{and} \quad
    u_{L}^{x,y} \defby u_L^{\delta_x, T_y \bm{\xi}_{L}^{\n}}.
  \end{equation*}
\end{definition}
With the parameters as above, \cite[Lemma 2.8]{konig2020longtime} implies the map
\begin{equation*}
  B_{p, \infty}^{\diri, \gamma}(Q_L) \times \X_{L}^{\n, \alpha} \ni (\phi, \bm{\theta})
  \mapsto u_L^{\phi, \bm{\theta}}(t, \cdot) \in \csp^{\diri, \beta}_{L}
\end{equation*}
 is continuous for
$\beta \in (0, \alpha + 2)$.
In particular,
we have $\lim_{\epsilon \to 0+} u_{L, \epsilon}^{x,y}(t, \cdot) = u_L^{x, y}(t, \cdot)$ in $\csp^{\diri, \beta}_L$.

Recall that, as mentioned in Notation, the Brownian motion $B$ in $\R^2$ starts
from $0$.
By the Feynman-Kac formula \cite[Section II.3.2]{carmona1990spectral}, we have
\begin{equation}\label{eq:feynman-kac}
  u_{L, \epsilon}^{x, y} (t, z) = p_t(x-z) \expect \Big[ \exp\Big( \int_0^t \{T_y \xi_{\epsilon}(z + B_s) - c_{\epsilon}\} ds \Big)
  \indic_{\{z + B_{[0, t]} \subseteq Q_L\}} \Big\vert B_t = x - z \Big]
\end{equation}
for $t \in (0, \infty)$, $x, z \in Q_L$ and $y \in \R^2$.
Therefore, if $L_1 \leq L_2$ then $u_{L_1, \epsilon}^{x,y}(t, z) \leq u_{L_2, \epsilon}^{x, y}(t, z)$ and hence
$u_{L_1}^{x,y}(t, z) \leq u_{L_2}^{x, y}(t, z)$. This implies the limits
in the following definition make sense.
\begin{definition}\label{def:def_of_u}
  For $t \in [0, \infty)$ and $x, y, z \in \R^2$, we set
  \begin{equation*}
    u_{\infty}^{x,y}(t, z) \defby \lim_{L \to \infty} u_{L}^{x,y}(t, z) \in [0, \infty],
    \quad u(t, z) \defby u_{\infty}^{0, 0}(t, z).
  \end{equation*}
\end{definition}
\begin{lemma}\label{lem:shift_of_u_infty}
  For every $t \in [0, \infty)$ and $x, y, z \in \R^2$, we have $u_{\infty}^{x, y}(t, x) = u_{\infty}^{x-z, y+z}(t, x-z)$.
\end{lemma}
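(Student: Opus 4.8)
The plan is to reduce the identity to the Feynman--Kac representation \eqref{eq:feynman-kac} for the mollified objects and then pass to the limits in $\epsilon$ and $L$. The case $t=0$ is immediate since $u_{\infty}^{x,y}(0,\cdot)=\delta_x$, so I would treat $t\in(0,\infty)$. Fix $x,y,z\in\R^2$ and $L\in\mathbb{I}$ large enough that $x\in Q_L$, and pick $L''\in\mathbb{I}$ with $L''\ge L+2\max\{\abs{z_1},\abs{z_2}\}$, which is possible because $\mathbb{I}$ is unbounded; this choice ensures both $x-z\in Q_{L''}$ and the set inclusion $Q_L-z\subseteq Q_{L''}$.

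The crucial observation is that the two Feynman--Kac exponents coincide pathwise: for a fixed $\epsilon\in\mathbb{J}$, regarding $\xi_{\epsilon}$ as a function we have $T_y\xi_{\epsilon}(x+B_s)=\xi_{\epsilon}(x+y+B_s)=T_{y+z}\xi_{\epsilon}\big((x-z)+B_s\big)$, and the renormalisation constant $c_{\epsilon}$ is the same in both representations. Hence, applying \eqref{eq:feynman-kac} with $z=x$ for the first object and with base point $x-z$ and box $Q_{L''}$ for the second,
\begin{align*}
  u_{L,\epsilon}^{x,y}(t,x)
  &= p_t(0)\,\expect\Big[\exp\Big(\int_0^t\{\xi_{\epsilon}(x+y+B_s)-c_{\epsilon}\}\,ds\Big)\,\indic_{\{x+B_{[0,t]}\subseteq Q_L\}}\,\Big|\,B_t=0\Big],\\
  u_{L'',\epsilon}^{x-z,y+z}(t,x-z)
  &= p_t(0)\,\expect\Big[\exp\Big(\int_0^t\{\xi_{\epsilon}(x+y+B_s)-c_{\epsilon}\}\,ds\Big)\,\indic_{\{(x-z)+B_{[0,t]}\subseteq Q_{L''}\}}\,\Big|\,B_t=0\Big].
\end{align*}
On the event $\{x+B_{[0,t]}\subseteq Q_L\}$ we have $(x-z)+B_{[0,t]}\subseteq Q_L-z\subseteq Q_{L''}$, so the first indicator is dominated by the second; since the integrand is nonnegative and identical in the two lines, this yields $u_{L,\epsilon}^{x,y}(t,x)\le u_{L'',\epsilon}^{x-z,y+z}(t,x-z)$.

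To finish I would let $\epsilon\to0$ on both sides, using the convergence $u_{L,\epsilon}^{\cdot,\cdot}(t,\cdot)\to u_L^{\cdot,\cdot}(t,\cdot)$ in $\csp_L^{\diri,\beta}$ with $\beta>0$ recalled before Definition \ref{def:def_of_u}, which gives uniform convergence on the relevant boxes and hence $u_L^{x,y}(t,x)\le u_{L''}^{x-z,y+z}(t,x-z)$. Since $L''\in\mathbb{I}$ and $M\mapsto u_M^{x-z,y+z}(t,x-z)$ is nondecreasing along $\mathbb{I}$ (the monotonicity in the box noted after \eqref{eq:feynman-kac}), the right-hand side is bounded by $u_{\infty}^{x-z,y+z}(t,x-z)$; letting $L\to\infty$ then gives $u_{\infty}^{x,y}(t,x)\le u_{\infty}^{x-z,y+z}(t,x-z)$. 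Applying this inequality with $(x,y,z)$ replaced by $(x-z,y+z,-z)$ produces the reverse inequality, so equality holds. I do not expect a genuine obstacle here --- the statement is pure translation-invariance bookkeeping --- but the one point that needs attention is choosing the enlarged box $L''$ inside the index set $\mathbb{I}$ and large enough that the monotone comparison with $u_{\infty}$ survives the spatial shift by $z$.
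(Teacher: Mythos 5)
Your proof is correct and follows essentially the same route as the paper: both arguments apply the Feynman--Kac formula \eqref{eq:feynman-kac} at the point $x$ (equivalently, condition on $B_t=0$ to get the Brownian bridge), observe that the exponent is invariant under the simultaneous shift $(x,y)\mapsto(x-z,y+z)$, dominate the indicator of $Q_L$ by that of an enlarged box containing $Q_L-z$ (the paper uses $Q_{2L}$ under the standing assumption $x,z\in Q_{L_0}$, you use a tailored $L''\in\mathbb{I}$), and then pass to the limits in $\epsilon$ and $L$ before concluding by symmetry. Your extra care with the choice of $L''$ inside $\mathbb{I}$ and with the $\epsilon\to0$ limit is fine but not a substantive difference.
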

\begin{proof}
  For $x, z \in Q_{L_0}$ and $L \geq L_0$, the Feynman-Kac formula \eqref{eq:feynman-kac} implies
  \begin{align*}
    u_{L, \epsilon}^{x, y}(t, x)
    &= \frac{1}{2\pi t} \expect \Big[ \exp\Big( \int_0^t \{T_y \xi_{\epsilon}(x + X^t_s) - c_{\epsilon}\} ds \Big)
    \indic_{\{x + X^t_{[0, t]} \subseteq Q_L\}} \Big] \\
    &\leq \frac{1}{2\pi t} \expect \Big[ \exp\Big( \int_0^t \{T_{y+z} \xi_{\epsilon}(x - z + X^t_s) - c_{\epsilon}\} ds \Big)
    \indic_{\{x - z + X^t_{[0, t]} \subseteq Q_{2L}\}} \Big] \\ &= u_{2L, \epsilon}^{x-z, y+z}(t, x-z).
  \end{align*}
  Hence $u_{\infty}^{x,y}(t, x) \leq u_{\infty}^{x-z, y+z}(t, x-z)$, and hence $u_{\infty}^{x,y}(t, x) = u_{\infty}^{x-z, y+z}(t, x-z)$ by symmetry.
\end{proof}
We next discuss relation between $u_{\infty}^{x,y}$ and the solution of the PAM in $\R^2$.
\begin{definition}
  We denote by $\tilde{u}^{\phi}_{\infty, \epsilon}$ the solution of the following PAM in $\R^2$:
  \begin{equation*}
    \begin{cases}
      \partial_t \tilde{u}^{\phi}_{\infty, \epsilon} = \frac{1}{2} \Delta \tilde{u}^{\phi}_{\infty, \epsilon} +
      (\xi_{\epsilon} - c_{\epsilon}) \tilde{u}^{\phi}_{\infty, \epsilon}, \,\, \mbox{in } (0, \infty) \times \R^2, \\
      \tilde{u}^{\phi}_{\infty, \epsilon}(0, \cdot) = \phi.
    \end{cases}
  \end{equation*}
  According to \cite{hairer_Labbe_2015}, if $\phi$ is a H\"older distribution of regularity better than $-1$ with
  at most exponential growth at infinity, the solution $\tilde{u}^{\phi}_{\infty, \epsilon}$ converges uniformly
  on compact sets of $(0, \infty) \times \R^2$ in probability and we denote the limit by $\tilde{u}^{\phi}_{\infty}$.
  Although this result cannot be applied when $\phi = \delta_x$, the work \cite{hairer_multiplicative_2018} constructed the solution
  of the three-dimensional PAM with initial condition $\delta_x$, and by adopting its approach we can prove that the solution
  $\tilde{u}^{\delta_x}_{\infty, \epsilon}$ converges in probability and we denote the limit by $\tilde{u}^{x}_{\infty}$.
\end{definition}
\begin{proposition}\label{prop:pam_in_plane}
  For each $x \in \R^2$, we have $u_{\infty}^{x,0} = \tilde{u}^x_{\infty}$ $\wnP$-almost surely.
\end{proposition}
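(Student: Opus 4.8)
The plan is to prove the identity first at the level of the mollified equations, where everything is classical, and then remove the mollification by combining the monotonicity in $L$ with the Feynman--Kac representation; the last step is the delicate one and I would handle it through the self-intersection local time of Brownian bridges, exactly the tool developed in Section~\ref{sec:brownian_bridge}. First I would observe that for fixed $\epsilon$ one has $\tilde u^{\delta_x}_{\infty,\epsilon}(t,z)=\lim_{L\to\infty}u_{L,\epsilon}^{x,0}(t,z)$, the limit being increasing: since $\xi_\epsilon$ is a smooth stationary Gaussian field, $\tilde u^{\delta_x}_{\infty,\epsilon}$ is the classical solution of $\partial_t v=\tfrac12\Delta v+(\xi_\epsilon-c_\epsilon)v$ with $v(0,\cdot)=\delta_x$, given by the Feynman--Kac formula obtained from \eqref{eq:feynman-kac} (with $y=0$) by dropping the confinement indicator $\indic_{\{z+B_{[0,t]}\subseteq Q_L\}}$ --- its finiteness following from the confinement of the Brownian bridge together with the subexponential growth of $\xi_\epsilon$ --- and monotone convergence gives the claim.

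Combined with the continuity of the solution map as $\epsilon\to0$ recalled before the statement, one inequality is then immediate: for each $L$, $u_L^{x,0}(t,z)=\lim_{\epsilon\to0}u_{L,\epsilon}^{x,0}(t,z)\le\liminf_{\epsilon\to0}\tilde u^{\delta_x}_{\infty,\epsilon}(t,z)$, so passing to a subsequence along which $\tilde u^{\delta_x}_{\infty,\epsilon}(t,z)\to\tilde u^x_\infty(t,z)$ $\wnP$-a.s.\ gives $u_L^{x,0}(t,z)\le\tilde u^x_\infty(t,z)$ and hence, letting $L\to\infty$, $u_\infty^{x,0}(t,z)\le\tilde u^x_\infty(t,z)$ $\wnP$-a.s. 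Running this over a countable dense set of $(t,z)$ with a single null set and then using continuity of $\tilde u^x_\infty$ makes the inequality simultaneous in $(t,z)$.

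For the reverse inequality I would first treat small times by a first-moment computation. Taking $\wnexpect$ in the Feynman--Kac formulas and performing the Gaussian integral, $\wnexpect[u_{L,\epsilon}^{x,0}(t,z)]$ and $\wnexpect[\tilde u^{\delta_x}_{\infty,\epsilon}(t,z)]$ become, respectively, the expectation of $\exp(\tfrac12\int_0^t\int_0^t p_\epsilon(X^t_s-X^t_r)\,ds\,dr-c_\epsilon t)$ over the relevant Brownian bridge restricted to $Q_L$, and the same expectation without the restriction. By the results of Section~\ref{sec:brownian_bridge}, for $t<\kappa^{-1}$ these mollified exponentials converge in $L^1$ of the bridge to $\exp(\gamma)$, with $\gamma$ the renormalized self-intersection local time of the bridge; feeding this, together with monotone convergence in $L$, into both sides identifies $\wnexpect[u_\infty^{x,0}(t,z)]=\wnexpect[\tilde u^x_\infty(t,z)]$. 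With the a.s.\ inequality above and nonnegativity, this forces $u_\infty^{x,0}(t,z)=\tilde u^x_\infty(t,z)$ $\wnP$-a.s.\ whenever $t<\kappa^{-1}$, and for all such $(t,z)$ simultaneously by continuity. To remove the restriction on $t$, I would establish the flow identity $u_\infty^{x,0}(t+s,z)=\int_{\R^2}u_\infty^{x,0}(s,w)\,u_\infty^{w,0}(t,z)\,dw$ by passing the Chapman--Kolmogorov relation for the mollified Dirichlet propagators to the limit (first $\epsilon\to0$, using the symmetry $u_{L,\epsilon}^{w,0}(t,z)=u_{L,\epsilon}^{z,0}(t,w)$ and uniform convergence on $Q_L$, then $L\to\infty$ monotonically), note the analogous identity for $\tilde u$, and bootstrap from small times by iterating in $n$ time-steps of length $<\kappa^{-1}$.

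I expect the main obstacle to be the small-time reverse inequality, that is, identifying $\wnexpect[u_\infty^{x,0}(t,z)]$ and $\wnexpect[\tilde u^x_\infty(t,z)]$ with the same exponential moment of the renormalized self-intersection local time of a Brownian bridge and, in particular, justifying the interchanged double limit $\lim_{L}\lim_{\epsilon}\wnexpect[u_{L,\epsilon}^{x,0}(t,z)]$. This rests squarely on the uniform-integrability (Scheff\'e-type) control of the mollified self-intersection local time exponentials of Brownian bridges, valid exactly for $t<\kappa^{-1}$ --- precisely what Section~\ref{sec:brownian_bridge} is built to provide --- and on the box-truncation error being controlled uniformly in $\epsilon$ by the same estimates. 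A purely PDE-theoretic route, showing that $u_\infty^{x,0}$ solves the full-space PAM via domain-uniform Schauder bounds and appealing to uniqueness, would bypass the restriction to small $t$ but demands heavier input from the paracontrolled solution theory.
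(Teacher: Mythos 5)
Your first inequality, $u_\infty^{x,0}\le\tilde u_\infty^x$, is exactly the paper's: compare the Feynman--Kac formulas \eqref{eq:feynman-kac} and \eqref{eq:feynman_kac_for_u_infty} at fixed $\epsilon$, drop the confinement indicator, and pass to the limits. For the reverse inequality you take a genuinely different and much heavier route. The paper computes no moments at all: it integrates both sides over the terminal point $y$, so that $\int_{\R^2}u_L^{x,0}(t,y)\,dy=U_L(t,x)$ (the box solution with flat initial condition) and $\int_{\R^2}\tilde u^{\delta_x}_{\infty,\epsilon}(t,y)\,dy=\tilde u^{\indic}_{\infty,\epsilon}(t,x)$, and then invokes the almost sure convergence $U_L\to\tilde u^{\indic}_\infty$ from \cite[Lemma 4.3]{konig2020longtime} together with Fatou. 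Since a nonnegative function dominated by another with the same finite integral must coincide with it, equality follows at once, for all $t>0$ simultaneously and with no restriction on $t$.

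Your route has two concrete weak points that the paper's avoids. First, for $z\ne x$ the bridge in your first-moment computation is pinned at distinct endpoints, so you need the $L^p$ convergence and the uniform integrability of the mollified self-intersection local time for bridges from $0$ to $w\ne 0$; Section~\ref{sec:brownian_bridge} proves this only for the bridge pinned at $0$, and the paper obtains only a one-sided Fatou bound in the off-diagonal case (Remark~\ref{rem:moment_of_pam_with_general_initial_condition}). Moreover, the uniform integrability in $\wnP$ needed to move $\lim_{\epsilon}$ inside $\wnexpect$ is obtained from second moments, which forces $t<(2\kappa)^{-1}$, not $t<\kappa^{-1}$ --- harmless for your plan, but your ``exactly for $t<\kappa^{-1}$'' is optimistic. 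Second, and more seriously, the bootstrap to large $t$ rests on the Chapman--Kolmogorov identity for $\tilde u_\infty$, which you merely assert. For $u_\infty^{x,0}$ it follows by monotone convergence from the Dirichlet propagators, but for $\tilde u_\infty$ there is no monotonicity in $\epsilon$ and no obvious almost sure dominating function integrable over $w\in\R^2$ uniformly in $\epsilon$; one would have to extract the semigroup property from the construction of $\tilde u^{\delta_x}_\infty$ in \cite{hairer_multiplicative_2018} (short-time smoothing followed by restarting from function-valued data), a nontrivial additional input. The integrated-mass argument is shorter and removes the time restriction entirely.
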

\begin{proof}
By the Feynman-Kac formula \cite[Section II.3.2]{carmona1990spectral}, we have
\begin{equation}\label{eq:feynman_kac_for_u_infty}
  \tilde{u}^{\delta_x}_{\infty, \epsilon}(t, y)
  = p_t(x-y) \expect \Big[\exp\Big(\int_0^t \{\xi_{\epsilon}(y+B_s) - c_{\epsilon}\} ds \Big) \Big\vert B_t = x-y \Big].
\end{equation}
Therefore, by comparing \eqref{eq:feynman-kac} and \eqref{eq:feynman_kac_for_u_infty},
we obtain $u_{L, \epsilon}^{x, 0} \leq \tilde{u}_{\infty, \epsilon}^{\delta_x}$.
By letting $\epsilon \to 0+$ and then $L \to \infty$, we get
\begin{equation}\label{eq:compare_u_and_tilde_u}
  u_{\infty}^{x, 0} \leq \tilde{u}_{\infty}^x.
\end{equation}
Furthermore, due to the property of conditional expectation, \eqref{eq:feynman_kac_for_u_infty} implies
\begin{equation}\label{eq:int_of_tilde_u_infty}
  \int_{\R^2} \tilde{u}^{\delta_x}_{\infty, \epsilon}(t, y)dy
  = \expect \Big[\exp\Big(\int_0^t \{\xi_{\epsilon}(y+B_s) - c_{\epsilon}\} ds \Big)\Big]
  = \tilde{u}^{\indic}_{\infty, \epsilon}(t, x),
\end{equation}
where the second equality is again thanks to the Feynman-Kac formula.
If we set $U_L(t, x) \defby u_L^{\indic, \bm{\xi}_L^{\n}}(t, x)$, we similarly
have
\begin{equation}\label{eq:U_L_and_int_of_u_L}
  U_L(t, x) = \int_{\R^2} u_L^{x, 0}(t, y) dy,
\end{equation}
as given in \cite[(43)]{konig2020longtime}.

By \eqref{eq:U_L_and_int_of_u_L} and the monotone convergence theorem, we have
\begin{equation*}
  \lim_{L \to \infty} U_L(t, x) = \int_{\R^2} u_{\infty}^{x, 0}(t, y) dy.
\end{equation*}
By \eqref{eq:compare_u_and_tilde_u}, we have
\begin{equation}\label{eq:compare_integrals_of_u_and_tilde_u}
  \int_{\R^2} u_{\infty}^{x, 0}(t, y) dy \leq
  \int_{\R^2} \tilde{u}_{\infty}^x(t, y)dy.
\end{equation}
By Fatou's lemma and \eqref{eq:int_of_tilde_u_infty}, we have
\begin{equation*}
  \int_{\R^2} \tilde{u}_{\infty}^x(t, y) dy
  \leq \liminf_{\epsilon \to 0} \int_{\R^2} \tilde{u}^x_{\infty, \epsilon}(t, y)dy
  = \tilde{u}_{\infty}^{\indic}(t, x).
\end{equation*}
However, according to \cite[Lemma 4.3]{konig2020longtime}, we have
$\lim_{L \to \infty} U_L = \tilde{u}_{\infty}^{\indic}$ $\wnP$-a.s.
Therefore, the inequality in \eqref{eq:compare_integrals_of_u_and_tilde_u}
is in fact an equality and hence, combined with \eqref{eq:compare_u_and_tilde_u},
$u_{\infty}^{x, 0} = \tilde{u}_{\infty}^x$ $\wnP$-a.s.
\end{proof}
Finally, we review the Anderson Hamiltonian $\H_L^{\bm{\theta}}$ on $Q_L$.
\begin{definition}
  Let $\alpha \in (-\frac{4}{3}, -1)$, $L \in \mathbb{I}$ and $\bm{\theta} \in \X_{L}^{\n, \alpha}$.
  According to \cite[Theorem 5.4]{chouk2020asymptotics},
  we can construct the unbounded self-adjoint operator $\H_L^{\bm{\theta}}$ defined by
  \begin{equation*}
    \H_L^{\bm{\theta}} u \defby -\frac{1}{2} \Delta u - \bm{\theta} \diamond u,
    \quad u \in \operatorname{Dom}(\H_L^{\bm{\theta}})
  \end{equation*}
  The operator $\H_L^{\bm{\theta}}$ has eigenvalues $\lambda_{1, L}^{\bm{\theta}} \leq \lambda_{2, L}^{\bm{\theta}} \leq \cdots$
  (counting multiplicities) which diverge to infinity.
  We have an orthonormal basis $\{v_{n, L}^{\bm{\theta}}\}_{n=1}^{\infty}$ of $L^2(Q_L)$ such that
  $v_{n, L}^{\bm{\theta}}$ is an eigenvector of $\H_L^{\bm{\theta}}$ with eigenvalue $\lambda_{n, L}^{\bm{\theta}}$.
\end{definition}
By \cite[Theorem 2.12]{konig2020longtime}, we have
\begin{equation*}
  u_L^{\delta_x, \bm{\theta}}(t, \cdot) = \sum_{n=1}^{\infty} e^{-t\lambda^{\bm{\theta}}_{n, L}} v_{n,L}^{\bm{\theta}}(x)
  v_{n,L}^{\bm{\theta}}(\cdot) \quad \mbox{in } L^2(Q_L),
  \quad \mbox{for } x \in Q_L, t \in (0, \infty).
\end{equation*}
Thus, Fubini's theorem yields
\begin{equation}\label{eq:trace_of_exp_of_AH}
  \sum_{n=1}^{\infty} e^{-t \lambda_{n,L}^{\bm{\theta}}} =
  \sum_{n=1}^{\infty} e^{-t \lambda_{n,L}^{\bm{\theta}}} \int_{Q_L} v_{n, L}^{\bm{\theta}} (x)^2 dx
  = \int_{Q_L} u_L^{\delta_x, \bm{\theta}}(t, x) dx.
\end{equation}
\begin{definition} Let $L \in \mathbb{I}$.
  \begin{enumerate}[(i)]
    \item We denote by
    $\H_L \defby \H_L^{\bm{\xi}_L^{\n}}$ the Anderson Hamiltonian
    with white noise on $Q_L$.
    \item We set $\lambda_{n, L} \defby \lambda_{n, L}^{\bm{\xi}_L^{\n}}$. We introduce the eigenvalue counting function $N_L$ defined by
    \begin{equation}\label{eq:def_of_N_L}
      N_L(\lambda) \defby \frac{1}{L^2} \sum_{n=1}^{\infty} \indic_{\{\lambda_{n, L} \leq \lambda\}}.
    \end{equation}
    The function $N_L$ is nondecreasing and right-continuous, and thus naturally induces the Lebesgue-Stieltjes measure $N_L(d\lambda)$.
  \end{enumerate}
\end{definition}

\section{Self-intersection local time of Brownian bridges}\label{sec:brownian_bridge}
Here we study SILT of two-dimensional Brownian bridges, which is used to
construct the IDS of the Anderson Hamiltonian with white noise in Section \ref{sec:ids}.
The SILT $\gamma$ of the Brownian motion $B$ has been extensively studied, see \cite{chen_random_2010} and references therein.
It is formally given by
\begin{equation*}
  \gamma([0,t]_{\leq}^2) = \iint_{[0,t]_{\leq}^2} \delta_0( B_s - B_r) dr ds.
\end{equation*}
For rigorous construction, one naturally considers the limit $\lim_{\epsilon \to 0+} \iint_{[0,t]_{\leq}^2}
p_{\epsilon}(B_s - B_r) dr ds$. However, this limit does not converge. In fact, the construction of
$\gamma([0,t]_{\leq}^2)$ requires renormalization, and hence we have the following
definition.
\begin{definition}\label{def:def_of_gamma}
  We set
  \begin{equation*}
    \gamma([0,t]_{\leq}^2) \defby \lim_{\epsilon \to 0+}\iint_{[0,t]_{\leq}^2} \{p_{\epsilon}(B_s - B_r) - \expect[p_{\epsilon}(B_s - B_r)] \}dr ds,
  \end{equation*}
  where the convergence in $L^2(\P)$ is proved by \cite[Theorem 2.3.2 and Theorem 2.4.1]{chen_random_2010}.
  We note that the convergence actually takes place in $L^p(\P)$ for every $p \in (0, \infty)$ by \eqref{eq:uniform_integrability_of_e_to_beta}.
\end{definition}
By the scaling property of $B$, we have $\gamma([0,t]_{\leq}^2) \dequal t \gamma([0,1]_{\leq}^2)$.
According to \cite{bass_self-intersection_2004} or \cite[Theorem 4.3.1]{chen_random_2010}, we have
\begin{equation*}
  \lim_{t \to \infty} \frac{1}{t} \log \P(\gamma([0,1]_{\leq}^2) \geq t) = - \kappa^{-1},
\end{equation*}
where $\kappa$ is the best constant of Ladyzhenskaya's inequality \eqref{eq:GN_ineq}.

Recall from Notation that we denote by $X^t$ the Brownian bridge with
$X^t_0 = X^t_t = 0$.
The aim of this section is to prove this form of the large deviation where the Brownian motion is replaced by the Brownian bridge $X^1$.
The following lemma serves as an important technical tool.
Note that the process $(B_s - \frac{s}{t} (B_t - x))_{s \in [0, t]}$ has the same law as the Brownian bridge from $0$ at $s = 0$ to $x$ at $s = t$.
\begin{lemma}[{\cite[Lemma 3.1]{nakao_spectral_1977}}]\label{lem:girsanov_brownian_bridge}
  Let $x \in \R^d$ and $u \in (0, t)$. We define the probability measure $\tilde{\P}$ by
  \begin{equation*}
    d\tilde{\P} \defby \Big(\frac{t}{t-u}\Big)^{\frac{d}{2}}
    \exp\Big( - \frac{\abs{x}^2u}{2t(t-u)} - \frac{\abs{B_u}^2}{2(t-u)} + \frac{\inp{x}{B_u}}{t-u} \Big) d\P.
  \end{equation*}
  Then, the law of $(B_s)_{s \in [0, u]}$ under $\tilde{\P}$ is that of $(B_s - \frac{s}{t} (B_t - x))_{s \in [0, u]}$ under $\P$.
\end{lemma}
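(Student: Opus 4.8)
The plan is to exploit the fact that the Radon--Nikodym density defining $\tilde\P$ is a deterministic function of the single random variable $B_u$; this reduces the statement to a comparison of one-dimensional marginal laws plus the Markov property.

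First I would write $Z \defby \big(\tfrac{t}{t-u}\big)^{d/2}\exp\big(-\tfrac{\abs{x}^2 u}{2t(t-u)} - \tfrac{\abs{B_u}^2}{2(t-u)} + \tfrac{\inp{x}{B_u}}{t-u}\big)$, so that $Z = g(B_u)$ for the explicit function $g$ on $\R^d$ read off from this expression. For a bounded measurable functional $F$ of the path $(B_s)_{s\in[0,u]}$ we have $\expect_{\tilde\P}[F] = \expect[F\, g(B_u)]$. Conditioning on $B_u$ and using that the conditional law of $(B_s)_{s\le u}$ given $B_u=z$ is the law $\Pi_z$ of the Brownian bridge from $0$ to $z$ on $[0,u]$, this equals $\int_{\R^d} g(z)\, p_u^{(d)}(z)\, \Pi_z(F)\, dz$, where $p_u^{(d)}(z) = (2\pi u)^{-d/2}\exp(-\abs{z}^2/2u)$.

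On the other side I would set $\beta_s \defby B_s - \tfrac{s}{t}(B_t - x)$; as recalled just before the statement, $(\beta_s)_{s\in[0,t]}$ is the Brownian bridge from $0$ to $x$ on $[0,t]$. By its Markov property, conditionally on $\beta_u = z$ the restriction $(\beta_s)_{s\le u}$ again has law $\Pi_z$, whence $\expect[F((\beta_s)_{s\le u})] = \int_{\R^d} q(z)\, \Pi_z(F)\, dz$, with $q$ the density of $\beta_u$. A direct computation shows $\beta_u = B_u - \tfrac{u}{t}(B_t-x)$ is Gaussian with mean $\tfrac{u}{t}x$ and covariance $\tfrac{u(t-u)}{t}I_d$, so $q(z) = \big(\tfrac{t}{2\pi u(t-u)}\big)^{d/2}\exp\big(-\tfrac{t}{2u(t-u)}\abs{z - \tfrac{u}{t}x}^2\big)$.

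Comparing the two integral representations, the lemma reduces to the pointwise identity $g(z)\, p_u^{(d)}(z) = q(z)$ on $\R^d$, which is elementary Gaussian algebra: the prefactors agree since $(2\pi u)^{-d/2}\big(\tfrac{t}{t-u}\big)^{d/2} = \big(\tfrac{t}{2\pi u(t-u)}\big)^{d/2}$, and in the exponents the $\abs{z}^2$-coefficients combine via $\tfrac1u + \tfrac1{t-u} = \tfrac{t}{u(t-u)}$ while the linear-in-$z$ and $z$-free terms match after expanding $\abs{z - \tfrac{u}{t}x}^2$. (As a byproduct this also yields $\expect[Z] = \int q = 1$, so $\tilde\P$ is a probability measure.) I do not expect a genuine analytic obstacle; the only steps that deserve a word of care are the two conditioning arguments --- identifying the law of a Brownian motion conditioned on its terminal value with a Brownian bridge, and invoking the Markov property of the Brownian bridge so that its $[0,u]$-restriction conditioned on the time-$u$ value carries the same law $\Pi_z$ --- both of which are standard.
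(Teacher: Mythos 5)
Your proof is correct. The paper itself gives no argument for this lemma: it is quoted from Nakao's work, and the (commented-out) sketch there goes through Girsanov's theorem, observing that the Brownian bridge from $0$ to $x$ on $[0,t]$ solves the SDE $dY_s = \frac{x-Y_s}{t-s}\,ds + dB_s$ and reading off the exponential martingale density at time $u$. You instead exploit that the density is a function of $B_u$ alone: disintegrating both sides over the time-$u$ marginal and using that a Brownian motion (resp.\ a Brownian bridge) conditioned on its value at time $u$ restricts on $[0,u]$ to the bridge law $\Pi_z$, you reduce the lemma to the pointwise identity $g(z)p_u^{(d)}(z) = q(z)$, which I have checked: the prefactors combine as you say, and expanding $\abs{z-\frac{u}{t}x}^2$ with $\frac{1}{u}+\frac{1}{t-u}=\frac{t}{u(t-u)}$ matches all three terms in the exponent (the variance $\frac{u(t-u)}{t}I_d$ of $\beta_u$ is also right). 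Your route is more elementary --- no stochastic calculus, only the Markov property and Gaussian algebra --- and it delivers $\expect[Z]=1$ for free; the Girsanov route is shorter if one already has the bridge SDE at hand and generalizes more readily to changes of measure whose density depends on the whole path rather than on the endpoint alone. For this particular lemma, where the density is an endpoint functional, your argument is entirely adequate and self-contained.
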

\begin{definition}\label{def:def_of_chi_epsilon}
  For a Borel set $A \subseteq [0,t]_{\leq}^2$, we set
  \begin{equation*}
    \chi^t_{\epsilon}(A) \defby \iint_A p_{\epsilon}(X^t_s - X^t_r) dr ds
  \end{equation*}
  and
  \begin{equation*}
    \beta_{\epsilon} (A) \defby \iint_A p_{\epsilon}(B_s - B_r) dr ds.
  \end{equation*}
  In addition, we set
  \begin{equation*}
    \alpha_{\epsilon}(A) \defby \iint_A p_{\epsilon}(B_s - \tilde{B}_r) dr ds,
  \end{equation*}
  where $\tilde{B}$ is an independent copy of $B$.
\end{definition}
The SILT of $X^t$ should be given as the limit of $\chi^t_{\epsilon}([0,t]_{\leq}^2)
- \expect[\chi^t_{\epsilon}([0,t]_{\leq}^2)]$ as $\epsilon \to 0+$.
We first estimate the renormalization constant $\expect[\chi^t_{\epsilon}([0,t]_{\leq}^2)]$.
\begin{lemma}\label{lem:renormalization_constant_of_SILT_of_BB}
  For $t \in (0, \infty)$ and $0 \leq a \leq b \leq c \leq d \leq t$, we have, as $\epsilon \to 0+$,
  \begin{align*}
    \expect[\chi^t_{\epsilon}([a,b]_{\leq}^2)] &= \frac{1}{2\pi} \Big[  (b-a)\log \epsilon^{-1} + (b-a)  \log t \\
    &\hspace{2cm}+ \int_0^{b-a}\{ \log s - \log (t-s)\} ds \Big] + o(1), \\
    \expect[\chi^t_{\epsilon}([a, b] \times [c, d])]
    &= \frac{1}{2\pi} \int_{c-a}^{d-a} \{\log s - \log(t-s)\} ds  \\
    &\hspace{2cm}- \frac{1}{2\pi} \int_{c-b}^{d-b} \{\log s - \log(t-s)\} ds + o(1).
  \end{align*}
\end{lemma}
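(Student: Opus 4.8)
The plan is to reduce the statement to deterministic integrals by computing the one-point function $\expect[p_\epsilon(X^t_s-X^t_r)]$ in closed form, and then to extract the $\epsilon\to 0+$ asymptotics by an elementary algebraic splitting followed by dominated convergence. The starting point is the representation $X^t_s=B_s-\frac st B_t$: a direct covariance computation shows that for $0\leq r\leq s\leq t$ the increment $X^t_s-X^t_r$ is centered Gaussian in $\R^2$ with covariance $g(s-r)\,I_2$, where $g(u)\defby u(t-u)/t$. Combined with the Gaussian identity $\int_{\R^2}p_\epsilon(z)\,\frac{1}{2\pi\sigma^2}e^{-|z|^2/(2\sigma^2)}\,dz=p_{\epsilon+\sigma^2}(0)=\frac{1}{2\pi(\epsilon+\sigma^2)}$, this gives $\expect[p_\epsilon(X^t_s-X^t_r)]=\frac{1}{2\pi(\epsilon+g(s-r))}$. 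Since $p_\epsilon\geq 0$, Tonelli's theorem moves the expectation inside the double integral, so that for any Borel $A\subseteq[0,t]_{\leq}^2$,
\[
  \expect[\chi^t_\epsilon(A)]=\frac{1}{2\pi}\iint_A\frac{dr\,ds}{\epsilon+g(s-r)},
\]
which is finite for every $\epsilon>0$ because $g\geq 0$ on $[0,t]$.

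For the first identity I take $A=[a,b]_{\leq}^2$ and change variables $u=s-r$; the slice $\{a\leq r\leq s\leq b,\ s-r=u\}$ has length $(b-a)-u$, so $\expect[\chi^t_\epsilon([a,b]_{\leq}^2)]=\frac{1}{2\pi}\int_0^{b-a}\frac{(b-a)-u}{\epsilon+g(u)}\,du$. Using $u-g(u)=u^2/t$ I write
\[
  \frac{1}{\epsilon+g(u)}=\frac{1}{\epsilon+u}+\frac{u^2/t}{(\epsilon+u)\,(\epsilon+g(u))}.
\]
The first term contributes $\int_0^{b-a}\frac{(b-a)-u}{\epsilon+u}\,du=(b-a+\epsilon)\log\frac{b-a+\epsilon}{\epsilon}-(b-a)=(b-a)\log\epsilon^{-1}+\int_0^{b-a}\log s\,ds+o(1)$, using $\int_0^m\log s\,ds=m\log m-m$. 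In the second term the integrand is nonnegative and bounded above, uniformly in $\epsilon$, by $\frac{(b-a)-u}{t-u}$, which is continuous and bounded on $[0,b-a]\subseteq[0,t]$ (it is identically $1$ in the extreme case $b-a=t$); dominated convergence then gives the limit $\int_0^{b-a}\frac{(b-a)-u}{t-u}\,du$, and writing $\frac{(b-a)-u}{t-u}=1-\frac{t-(b-a)}{t-u}$ and using $\int_0^m\log(t-s)\,ds=t\log t-(t-m)\log(t-m)-m$ identifies this with $(b-a)\log t-\int_0^{b-a}\log(t-s)\,ds$. Adding the two contributions yields the claimed formula.

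For the second identity I take $A=[a,b]\times[c,d]$ with $b\leq c$. Here $g(s-r)>0$ for a.e.\ $(r,s)\in A$ (it can vanish only at the isolated corner where $s-r\in\{0,t\}$), so $\frac{1}{\epsilon+g(s-r)}\to\frac{1}{g(s-r)}$ a.e.; by the partial fraction $\frac{t}{u(t-u)}=\frac1u+\frac1{t-u}$ this limit equals $\frac{1}{s-r}+\frac{1}{t-(s-r)}$, which is integrable over $A$ (its logarithmic singularities along $\{s=r\}$ and along $\{s-r=t\}$ have finite integral) and dominates the integrands for all $\epsilon>0$, so dominated convergence gives
\[
  \expect[\chi^t_\epsilon([a,b]\times[c,d])]\longrightarrow\frac{1}{2\pi}\int_a^b\!\!\int_c^d\Bigl(\frac{1}{s-r}+\frac{1}{t-(s-r)}\Bigr)\,ds\,dr.
\]
Performing the $r$-integral first gives $\int_a^b(\cdots)\,dr=\bigl[\log(s-a)-\log(t-(s-a))\bigr]-\bigl[\log(s-b)-\log(t-(s-b))\bigr]$, and then the substitutions $\sigma=s-a$ and $\sigma=s-b$ in the $s$-integral turn this into $\int_{c-a}^{d-a}\{\log\sigma-\log(t-\sigma)\}\,d\sigma-\int_{c-b}^{d-b}\{\log\sigma-\log(t-\sigma)\}\,d\sigma$, which is exactly the claimed formula.

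I do not expect a genuine obstacle: the content is a Gaussian second-moment computation followed by one-variable calculus. The only points demanding care are (i) choosing a dominating function that is valid uniformly in $\epsilon$ and remains integrable up to the degenerate configurations ($b-a=t$, $b=c$, $a=0$, $d=t$) where a denominator vanishes at the boundary of the domain, and (ii) keeping the change-of-variables bookkeeping straight — the slice weight $(b-a)-u$ and the substitutions $\sigma=s-a,\ s-b$ — so that the integration limits land on the stated endpoints.
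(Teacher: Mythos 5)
Your proposal is correct and follows essentially the same route as the paper: both reduce the claim to the explicit Gaussian one-point function $\expect[p_{\epsilon}(X^t_s-X^t_r)]=\frac{1}{2\pi(\epsilon+g(s-r))}$ with $g(u)=u(t-u)/t$ and then do one-variable calculus. The only difference is in the final step, where the paper evaluates $\int_0^{s-a}\frac{dr}{\epsilon+r-r^2/t}$ in closed form by completing the square and then expands in $\epsilon$, whereas you split off $\frac{1}{\epsilon+u}$ algebraically and treat the remainder by dominated convergence — both are valid and your domination bounds (including the degenerate boundary cases) check out.
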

\begin{proof}
  Let $s \geq r$. Since $(X^t_s)_{s \in [0, t]} \dequal (B_s - \frac{s}{t} B_t)_{s \in [0, t]}$, $X^t_s - X^t_r$ is centered Gaussian with
  \begin{equation*}
    \expect[(X^t_s - X^t_r)^2]
    = s - r - \frac{(s-r)^2}{t}.
  \end{equation*}
  Therefore, we have
  \begin{align*}
    2 \pi \epsilon \expect[p_{\epsilon}(X_s^t - X_r^t)]
    &=  \int_{\R^2} \exp\Big( -\frac{1}{2\epsilon} \Big(s-r - \frac{(s-r)^2}{t}\Big) \abs{x}^2 \Big)
    \frac{e^{-\frac{\abs{x}^2}{2}}}{2\pi}  dx \\
    &= \Big( 1+ \frac{1}{\epsilon} \Big(s-r - \frac{(s-r)^2}{t} \Big) \Big)^{-1}
  \end{align*}
  and
  \begin{align*}
    2 \pi \iint_{[a,b]_{\leq}^2} \expect[p_{\epsilon}(X_s^t - X_r^t)] dr ds
    &= \int_a^b \int_a^s \frac{dr}{\epsilon + s-r - \frac{(s-r)^2}{t}} \, ds \\
    &= \int_a^b \int_0^{s-a} \frac{dr}{\epsilon + r - \frac{r^2}{t}} \, ds.
  \end{align*}
  We compute
  \begin{align*}
    \int_0^{s-a} \frac{dr}{\epsilon + r -\frac{r^2}{t}} dr
    &= \int_0^{s-a} \frac{dr}{\epsilon + \frac{t}{4} - \frac{1}{t}(r - \frac{t}{2})^2} \\
    &= \frac{t}{\sqrt{4\epsilon t + t^2}} \int_0^{s-a} \Big( \frac{1}{\sqrt{\epsilon t + \frac{t^2}{4}} + r - \frac{t}{2}}
    + \frac{1}{\sqrt{\epsilon t + \frac{t^2}{4}} - r + \frac{t}{2}} \Big) dr \\
    &= \frac{t}{\sqrt{4\epsilon t + t^2}}
    \Big\{ \log\Big( \sqrt{\epsilon t + \frac{t^2}{4}} - \frac{t}{2} + s - a \Big)
    - \log\Big( \sqrt{\epsilon t + \frac{t^2}{4}} - \frac{t}{2}  \Big) \\
    &\hspace{2cm} - \log\Big( \sqrt{\epsilon t + \frac{t^2}{4}} + \frac{t}{2} - s + a \Big)
    + \log\Big( \sqrt{\epsilon t + \frac{t^2}{4}} + \frac{t}{2}  \Big) \Big\}.
  \end{align*}
  Thus, $2 \pi \iint_{[a,b]_{\leq}^2} \expect[p_{\epsilon}(X_s^t - X_r^t)] dr ds$ equals to
  \begin{align*}
    &\frac{t}{\sqrt{4\epsilon t + t^2}}
    \Big[ (b-a) \log\Big( \sqrt{\epsilon t + \frac{t^2}{4}} + \frac{t}{2} \Big)
    - (b-a) \log\Big( \sqrt{\epsilon t + \frac{t^2}{4}} - \frac{t}{2}  \Big) \\
    &\hspace{2cm} + \int_0^{b-a} \Big\{ \log\Big( \sqrt{\epsilon t + \frac{t^2}{4}} - \frac{t}{2} + s \Big)
    -  \log\Big( \sqrt{\epsilon t + \frac{t^2}{4}} + \frac{t}{2} -s  \Big) \Big\} ds \Big]
  \end{align*}
  To estimate $\expect[\chi^t_{\epsilon}([a,b]_{\leq}^2)]$, it remains to compute
  \begin{align*}
    \log \Big( \sqrt{\epsilon t + \frac{t^2}{4}} - \frac{t}{2}  \Big)
    &= \log \frac{t}{2} + \log \Big( \sqrt{1 + \frac{4\epsilon}{t}} - 1 \Big) \\
    &= \log \frac{t}{2} + \log \Big( \frac{2\epsilon}{t} + o(\epsilon) \Big) = \log \epsilon + o(1).
  \end{align*}

  Similarly, we compute
  \begin{align*}
    \MoveEqLeft[3]
    \lim_{\epsilon \to 0+} 2\pi \expect[\chi^t_{\epsilon}([a,b]\times [c,d]) ] \\
    &= \int_c^d \int_{s-b}^{s-a} \Big\{\frac{1}{r} + \frac{1}{t-r} \Big\} dr ds \\
    &= \int_c^d \{\log(s-a) - \log(s-b) - \log(t-s +a) + \log(t-s+b)\} ds. \qedhere
  \end{align*}
\end{proof}
\begin{remark}\label{rem:renormalization_constant_of_SILT_of_BM}
  Similar calculation yields
  \begin{equation*}
    \expect[\beta_{\epsilon}([a, b]_{\leq}^2)]
    = \frac{b-a}{2\pi} \{\log \epsilon^{-1} + \log (b-a) - 1\} + o(1).
  \end{equation*}
\end{remark}

\begin{definition}\leavevmode
  \begin{enumerate}[(i)]
    \item Let $A$ be a Borel set of $[0, \infty)^2$.
     It is known that $\alpha_{\epsilon}(A)$ converges in $L^p(\P)$ for every $p \in (0, \infty)$ \cite[Theorem 2.2.3]{chen_random_2010}.
    We set
    \begin{equation*}
      \alpha(A) \defby \lim_{\epsilon \to 0+} \alpha_{\epsilon}(A).
    \end{equation*}
    \item
    As mentioned in Definition \ref{def:def_of_gamma},
    $\beta_{\epsilon}([a,b]_{\leq}^2) - \expect[\beta_{\epsilon}([a,b]_{\leq}^2)]$ converges
    to $\gamma([a,b]_{\leq}^2)$
    in $L^p(\P)$ for every $p \in (0, \infty)$.
    In contrast, if $A$ is of the form
    \begin{equation}\label{eq:sum_of_nondiagonal_squares}
      \cup_{k=1}^n [a_i, b_i] \times [c_i, d_i] \quad \mbox{with } 0 \leq
    a_i \leq b_i \leq c_i \leq d_i,
    \end{equation}
    the random variable
    $\beta_{\epsilon}(A)$ converges
    in $L^p(\P)$ for every $p \in (0, \infty)$ without renormalization
    \cite[Theorem 2.3.2]{chen_random_2010}.
    Thus, for such $A$, we set
    \begin{equation*}
      \beta(A) \defby \lim_{\epsilon \to 0+} \beta_{\epsilon}(A).
    \end{equation*}
  \end{enumerate}
\end{definition}

We have the scaling property: $\alpha([0, t]^2) \dequal t \alpha([0,1]^2)$ \cite[Proposition 2.2.6]{chen_random_2010}.
Later, we use the following inequality
\begin{equation}\label{eq:le_gall_moment_formula}
  \expect[\alpha(A_1 \times A_2)^m ] \leq \expect[\alpha(A_1^2)^m]^{\frac{1}{2}} \expect[\alpha(A_2^2)^m]^{\frac{1}{2}},
  \quad m \in \N, \,\, A_1, A_2 \subseteq [0, \infty)
\end{equation}
from \cite[(2.2.12)]{chen_random_2010}.
According to \cite[Theorem 3.3.2]{chen_random_2010}, $\alpha$ satisfies the following large deviation
\begin{equation}\label{eq:large_deviation_of_alpha}
  \lim_{t \to \infty} \frac{1}{t} \log \P(\alpha([0, 1]^2) \geq t) = - \kappa^{-1}.
\end{equation}

As indicated in \cite[Proposition 2.3.4]{chen_random_2010}, independence of increments of the Brownian motion implies
\begin{equation}\label{eq:alpha_and_beta}
  \beta_{\epsilon}([a, b] \times [b, d]) \dequal \alpha_{\epsilon}([0, b-a] \times [0, d - b]), \quad 0 \leq a \leq b \leq d.
\end{equation}
Besides, as shown in the proof of \cite[(4.3.8)]{chen_random_2010},
for every $\lambda \in (0, \infty)$,
\begin{equation}\label{eq:uniform_integrability_of_e_to_alpha}
  \sup_{\epsilon \in (0, \infty)} \expect[ e^{\lambda \alpha_{\epsilon}(A)} ]
  \leq \expect[ e^{\lambda \alpha(A)} ], \qquad
  \sup_{\epsilon \in (0, \infty)} \expect[ e^{\lambda \beta_{\epsilon}(B)} ]
  \leq \expect[ e^{\lambda \beta(B)} ],
\end{equation}
where $B$ is of the form \eqref{eq:sum_of_nondiagonal_squares}

The work \cite{varadhan_quantum_1969} constructed the SILT of the Brownian bridge $X^t$.
We give an alternative proof by taking advantage of the construction of $\gamma$.
\begin{theorem}\label{thm:construction_of_SILT_of_BB}
  Let $t \in (0, \infty)$.
  \begin{enumerate}[(i)]
    \item For $0 \leq a \leq b \leq c \leq d \leq t$ and $p \in (0, \infty)$, the net
    $\{\chi^t_{\epsilon}([a,b] \times [c,d])\}_{\epsilon > 0}$ converges in $L^p(\P)$ as $\epsilon \to 0+$.
    \item For $0 \leq a \leq b \leq t$ and $p \in (0, \infty)$, the net
    $\{ \chi^t_{\epsilon}([a,b]_{\leq}^2) - \expect[\chi^t_{\epsilon}([a,b]_{\leq}^2)]\}_{\epsilon > 0}$
    converges in $L^p(\P)$ as $\epsilon \to 0+$.
    \item For every $\lambda \in (0, \kappa^{-1})$,
      \begin{equation*}
        \sup_{\epsilon \in (0, \infty)} \expect\Big[ \exp \Big\{\lambda (\chi_{\epsilon}^1([0,1]_{\leq}^2) -
        \expect[\chi_{\epsilon}^1([0,1]_{\leq}^2)]) \Big\}\Big] < \infty.
      \end{equation*}
  \end{enumerate}
\end{theorem}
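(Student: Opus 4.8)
Throughout, the plan is to transfer to the Brownian bridge the facts about the Brownian motion recalled in Section~\ref{sec:brownian_bridge}: the convergence of $\beta_\epsilon$, of the renormalized $\beta_\epsilon([a,b]^2_{\leq})-\expect[\beta_\epsilon([a,b]^2_{\leq})]$ and of $\alpha_\epsilon$, the uniform exponential integrability \eqref{eq:uniform_integrability_of_e_to_alpha} and \eqref{eq:uniform_integrability_of_e_to_beta}, the moment inequality \eqref{eq:le_gall_moment_formula}, and the large deviation \eqref{eq:large_deviation_of_alpha}. By Brownian scaling it suffices to treat $t=1$, since realizing $X^t_s=\sqrt t\,X^1_{s/t}$ gives $\chi^t_\epsilon([a,b]\times[c,d])=t\,\chi^1_{\epsilon/t}([a/t,b/t]\times[c/t,d/t])$. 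The basic mechanism is Lemma~\ref{lem:girsanov_brownian_bridge} with $x=0$: for every $u\in(0,1)$ the law of $(X^1_s)_{s\le u}$ equals $\rho_u\cdot(\text{law of }(B_s)_{s\le u})$ with $\rho_u=(1-u)^{-1}\exp(-|B_u|^2/(2(1-u)))$, a density that is bounded by $(1-u)^{-1}$ and satisfies $\expect[\rho_u]=1$; by the time-reversal symmetry $(X^1_{1-\cdot})\dequal(X^1_\cdot)$ the analogous statement holds on any interval $[u,1]$ relative to a time-reversed Brownian motion. Finally, comparing Lemma~\ref{lem:renormalization_constant_of_SILT_of_BB} (with $t=1$) and Remark~\ref{rem:renormalization_constant_of_SILT_of_BM} shows that the $\log\epsilon^{-1}$ terms in $\expect[\chi^1_\epsilon([a,b]^2_{\leq})]$ and in $\expect[\beta_\epsilon([a,b]^2_{\leq})]$ cancel, so $\expect[\beta_\epsilon([a,b]^2_{\leq})]-\expect[\chi^1_\epsilon([a,b]^2_{\leq})]$ converges to a finite constant as $\epsilon\to 0+$ (and for an off-diagonal rectangle $R$, where no renormalization occurs, $\expect[\chi^1_\epsilon(R)]-\expect[\beta_\epsilon(R)]$ converges likewise).

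Next I split each of $[a,b]^2_{\leq}$ and $[a,b]\times[c,d]$ into finitely many pieces of three kinds: (a) a set whose largest time is $<1$; (b) a set whose smallest time is $>0$; (c) an off-diagonal rectangle $[0,b']\times[c',1]$ with $b'\le c'<1$. For instance $[a,1]^2_{\leq}=[a,1-\eta]^2_{\leq}\cup[1-\eta,1]^2_{\leq}\cup([a,1-\eta]\times[1-\eta,1])$, whose first two blocks are of types (a) and (b), while the cross rectangle is of type (b) if $a>0$ and, if $a=0$, splits into a type-(b) piece and a type-(c) piece (plus, after a harmless further split used to guarantee $b'<c'$, a type-(a) piece). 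A similar bookkeeping handles $[a,b]\times[c,d]$ with $d=1$; every remaining configuration already falls into (a) or (b) up to a set of measure zero.

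For types (a) and (b) the transfer is immediate. Writing the relevant quantity as a functional of $(X^1_s)_{s\le u}$ with $u<1$ (resp.\ of $(X^1_s)_{s\ge u}$ with $u>0$), Lemma~\ref{lem:girsanov_brownian_bridge} turns it into the corresponding functional of a Brownian motion weighted by the bounded density $\rho_u$, so Hölder's inequality reduces the $L^p$-convergence to the convergence of $\beta_\epsilon$ — on the diagonal blocks, of the renormalized $\beta_\epsilon-\expect[\beta_\epsilon]$, with the difference of renormalization constants absorbed by the cancellation of the first paragraph — and reduces the exponential estimates to \eqref{eq:uniform_integrability_of_e_to_alpha} and \eqref{eq:uniform_integrability_of_e_to_beta}. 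Moreover the exponential threshold can only improve, because by scaling $\gamma([a,b]^2_{\leq})\dequal(b-a)\gamma([0,1]^2_{\leq})$ and likewise for $\alpha$, so $\kappa^{-1}$ is replaced by $\kappa^{-1}/(b-a)$.

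The main obstacle is type (c), where the rectangle is pinned near $s=1$ and near $r=0$ simultaneously and neither direction of Lemma~\ref{lem:girsanov_brownian_bridge} applies directly. The idea is to condition on $X^1_m$ for a fixed $m\in(b',c')$: given $X^1_m=a$, the restriction of $X^1$ to $[0,m]$ is a Brownian bridge from $0$ to $a$, that to $[m,1]$ is an independent Brownian bridge from $a$ to $0$, and $\chi^1_\epsilon([0,b']\times[c',1])$ depends only on $(X^1_r)_{r\le b'}$ and $(X^1_s)_{s\ge c'}$. Since $b'<m$ and $c'>m$, each piece lives on a strict sub-interval of the relevant bridge's lifetime, so Lemma~\ref{lem:girsanov_brownian_bridge} applies — directly to the first piece and to the time reversal of the second — to replace them by independent standard Brownian motions $B,\tilde B$; under this change of variables $X^1_s-X^1_r$ becomes $\tilde B_{1-s}-B_r$, so after the substitution $s\mapsto1-s$ one obtains exactly $\alpha_\epsilon([0,b']\times[0,1-c'])$, weighted by a Radon--Nikodym density $G=G(B,\tilde B)$ — an explicit Gaussian integral over $a$ — with $\expect[G]=1$ and finite moments up to some fixed order. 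Hölder's inequality against $G$ together with the $L^p$-convergence of $\alpha_\epsilon$ gives (i) and (ii) for type-(c) pieces; for the exponential bound, \eqref{eq:uniform_integrability_of_e_to_alpha}, \eqref{eq:le_gall_moment_formula} and scaling yield $\expect[e^{\lambda\alpha_\epsilon([0,b']\times[0,1-c'])}]\le\expect[e^{\lambda\sqrt{b'(1-c')}\,\alpha([0,1]^2)}]$, which by \eqref{eq:large_deviation_of_alpha} is finite as soon as $\lambda\sqrt{b'(1-c')}<\kappa^{-1}$; since $b'(1-c')<1$ this leaves room to absorb, via Hölder, the exponent attached to $G$. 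Part (iii) then follows by writing $[0,1]^2_{\leq}=[0,1-\eta]^2_{\leq}\cup R_\eta$: the block $[0,1-\eta]^2_{\leq}$ has, by the type-(a) analysis, a uniform exponential bound with threshold $\kappa^{-1}/(1-\eta)>\lambda$, while $R_\eta$ decomposes into type-(b) and type-(c) pieces each supported on a time set of size $O(\eta)$, hence with exponential threshold of order $\kappa^{-1}/\eta$ by the scaling gains above; a two-term Hölder inequality then gives $\sup_\epsilon\expect[\exp\{\lambda(\chi^1_\epsilon([0,1]^2_{\leq})-\expect[\chi^1_\epsilon([0,1]^2_{\leq})])\}]<\infty$ for every $\lambda<\kappa^{-1}$ once $\eta$ is small. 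I expect the two delicate points to be the careful accounting of the renormalization constants across the decomposition and the verification that $G$ has enough finite moments, for which one exploits the freedom in choosing $m$ and, where needed, in first shrinking $b'$ and $1-c'$.
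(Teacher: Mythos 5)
Your strategy is essentially the paper's: reduce to the Brownian motion via Lemma~\ref{lem:girsanov_brownian_bridge} and reversibility on pieces bounded away from one endpoint, handle the corner rectangle by conditioning on the bridge at an interior time so that the two halves become independent bridges and the quantity reduces to $\alpha_{\epsilon}$ weighted by an explicit Gaussian density, control the off-diagonal exponential moments by \eqref{eq:le_gall_moment_formula} and scaling, and absorb the mismatch of renormalization constants by comparing Lemma~\ref{lem:renormalization_constant_of_SILT_of_BB} with Remark~\ref{rem:renormalization_constant_of_SILT_of_BM}. All of that is correct and matches the paper's proof, including the two delicate points you flag.

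There is, however, one genuine gap, and it sits at the heart of part (iii). You list the uniform bound $\sup_{\epsilon}\expect[e^{\lambda(\beta_{\epsilon}([0,1]_{\leq}^2)-\expect[\beta_{\epsilon}([0,1]_{\leq}^2)])}]<\infty$ for \emph{every} $\lambda\in(0,\kappa^{-1})$ — i.e.\ \eqref{eq:uniform_integrability_of_e_to_beta} — among the ``facts about the Brownian motion recalled in Section~\ref{sec:brownian_bridge}'' to be transferred. It is not a recalled fact: that display is stated and proved \emph{inside} the proof of this very theorem, and the off-the-shelf input, \cite[Lemma~A.1]{gu_moments_2018}, gives the uniform bound only for $\lambda$ below some $\lambda_0$ which need not be close to $\kappa^{-1}$. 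Unlike $\alpha_{\epsilon}$ and the off-diagonal $\beta_{\epsilon}$, the renormalized diagonal quantity enjoys no monotonicity in $\epsilon$ of the type \eqref{eq:uniform_integrability_of_e_to_alpha}, so one cannot pass from $\expect[e^{\lambda\gamma([0,1]_{\leq}^2)}]<\infty$ (\cite[Theorem~4.3.1]{chen_random_2010}) to a bound uniform in $\epsilon$: the uniform integrability of $e^{\lambda(\beta_{\epsilon}-\expect[\beta_{\epsilon}])}$ is precisely what is to be proved. The paper closes this by a separate bootstrap: split $[0,1]_{\leq}^2$ into $2^n$ i.i.d.\ diagonal triangles $B_k$ (each harmless once $2^n>q\lambda\lambda_0^{-1}$, by Brownian scaling and the small-$\lambda$ result) and a union $A$ of off-diagonal dyadic squares, apply H\"older, and bound $\expect[e^{p\lambda\beta(A)}]$ by writing $\beta(A)=\expect[\beta(A)]+\gamma([0,1]_{\leq}^2)-\sum_k\gamma(B_k)$ and using $\expect[e^{p^2\lambda\gamma([0,1]_{\leq}^2)}]<\infty$ for $p^2\lambda<\kappa^{-1}$ together with the finiteness of negative exponential moments of $\gamma$ (\cite[Theorem~4.3.2]{chen_random_2010}). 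Without this step your argument only yields (iii) for $\lambda$ in a small neighbourhood of $0$, which is not enough for the sharp threshold $\kappa^{-1}$ needed in Theorem~\ref{thm:convergence_to_IDS} and Theorem~\ref{thm:right_tail_of_SILT_of_BB}.
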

\begin{proof}
  (i) Suppose $d < t$. Then, by Lemma \ref{lem:girsanov_brownian_bridge},
  \begin{multline*}
    \expect[ \abs{\chi^t_{\epsilon}([a,b]\times [c,d]) - \chi^t_{\epsilon'}([a,b]\times [c,d])}^p]\\
    \leq \frac{t}{t-d} \expect[ \abs{\beta_{\epsilon}([a,b]\times [c,d]) - \beta_{\epsilon'}([a,b]\times [c,d])}^p]
    \to 0 \quad (\epsilon, \epsilon' \to 0+).
  \end{multline*}
  Therefore, the claim for the case $d < t$ is proved. Thank to the reversibility of $X^t$, the case $a > 0$ and $d = t$ is covered as well.
  It remains to consider the case $a = 0$ and $d = t$.
  Let $\delta \in (0, \min\{b, t-c, \frac{t}{4}\})$. We decompose $\chi_{\epsilon}^t([0,b] \times [c, t])$ into the sum
  \begin{equation*}
    \chi_{\epsilon}^t([0, b] \times [c, t - \delta])
    + \chi_{\epsilon}^t([\delta, b] \times [t - \delta, t])
    + \chi_{\epsilon}^t([0, \delta] \times [t - \delta, t]).
  \end{equation*}
  The first two terms converges in $L^p(\P)$. Thus, we will prove the convergence of the third term.
  The key is to observe that, under the condition that $X_{t/2}^t = x$,
  the processes $(X^t_s)_{s \in [0, \frac{t}{2}]}$ and $(X_{t-s}^t)_{s \in [0, \frac{t}{2}]}$ are
  independent and identically distributed, and
  the distribution is the Brownian bridge $Y$ from $0$ at $s=0$ to $x$ at $s=\frac{t}{2}$.
  Let $\tilde{Y}$ and $\tilde{B}$ be independent copies of $Y$ and $B$ respectively.
  Then, since $X^t_{t/2}$ is a centered Gaussian with variance $\frac{t}{4}$,
  the preceding remark implies
  \begin{multline*}
    \expect[ \abs{\chi^t_{\epsilon}([0, \delta] \times [t-\delta, t]) - \chi^t_{\epsilon'}([0, \delta] \times [t-\delta, t])}^p ] \\
    = \int_{\R^2} dx p_{\frac{t}{4}}(x) \expect\Big[ \Big\lvert \iint_{[0, \delta]^2}
    \{ p_{\epsilon}(Y_s - \tilde{Y}_r) - p_{\epsilon'}(Y_s - \tilde{Y}_r) \} \Big\rvert^p \Big]
  \end{multline*}
  By Lemma \ref{lem:girsanov_brownian_bridge}, we have
  \begin{multline*}
    \expect\Big[ \Big\lvert \iint_{[0, \delta]^2}
    \{ p_{\epsilon}(Y_s - \tilde{Y}_r) - p_{\epsilon'}(Y_s - \tilde{Y}_r) \} \Big\rvert^p \Big] \\
    = \frac{t}{t - 2 \delta}
      \expect\Big[ \exp\Big( -\frac{4 \delta \abs{x}^2}{t(t-2\delta)} -\frac{\abs{B_{\delta}}^2 + \abs{\tilde{B}_{\delta}}^2}
      {t-2\delta} + \frac{2 \inp{x}{B_{\delta} + \tilde{B}_{\delta}}}{t-2\delta} \Big) \\
      \times \abs{\alpha_{\epsilon}([0, \delta]^2) - \alpha_{\epsilon'}([0, \delta]^2)}^p \Big].
  \end{multline*}
  Therefore, by applying the Cauchy-Schwarz inequality, we obtain
  \begin{multline*}
    \expect[ \abs{\chi^t_{\epsilon}([0, \delta] \times [t-\delta, t]) - \chi^t_{\epsilon'}([0, \delta] \times [t-\delta, t])}^p ] \\
    \leq \frac{t}{t-2 \delta} \expect[
    \abs{\alpha_{\epsilon}([0, \delta]^2) - \alpha_{\epsilon'}([0, \delta]^2)}^{2p}]^{\frac{1}{2}} \\
    \times \int_{\R^2} dx p_{\frac{t}{4}}(x)
    e^{-\frac{4 \delta \abs{x}^2}{t(t - 2 \delta)}}
    \expect\Big[ \exp \Big(-\frac{2 \abs{B_{\delta}}^2}{t - 2 \delta}
    + \frac{4 \inp{x}{B_{\delta}}}{t - 2 \delta} \Big)\Big].
  \end{multline*}
  We compute
  \begin{align*}
    \expect\Big[ \exp \Big(-\frac{2 \abs{B_{\delta}}^2}{t - 2 \delta}
    + \frac{4 \inp{x}{B_{\delta}}}{t - 2 \delta} \Big)\Big]
    &= \int_{\R^2} \exp \Big(-\frac{2 \delta \abs{u}^2}{t - 2 \delta}
    + \frac{4 \sqrt{\delta} \inp{x}{u}}{t - 2 \delta} \Big)
    \frac{e^{-\frac{\abs{u}^2}{2}}}{2 \pi} du \\
    &= \frac{t - 2 \delta}{t} \exp\Big(
    \frac{8 \delta \abs{x}^2}{t(t - 2 \delta)} \Big)
  \end{align*}
  and observe
  \begin{equation*}
    \int_{\R^2}  p_{\frac{t}{4}}(x)
    e^{-\frac{4 \delta \abs{x}^2}{t(t - 2 \delta)}}
    e^{\frac{8 \delta \abs{x}^2}{t(t - 2 \delta)}} dx
    = \frac{2}{\pi t}
    \int_{\R^2} e^{-\frac{2 \abs{x}^2}{t}}
    e^{\frac{4 \delta \abs{x}^2}{t(t - 2 \delta)}} dx
  \end{equation*}
  is finite, since $\delta < \frac{t}{4}$.
  As $\{\alpha_{\epsilon}([0, \delta]^2)\}_{\epsilon > 0}$ converges in
  $L^{2p}(\P)$, these estimates show
  \begin{equation*}
    \lim_{\epsilon, \epsilon' \to 0+}
    \expect[ \abs{\chi^t_{\epsilon}([0, \delta] \times [t-\delta, t]) - \chi^t_{\epsilon'}([0, \delta] \times [t-\delta, t])}^p ] = 0
  \end{equation*}
  and hence we end the proof of (i).

  (ii) Set $c \defby \frac{a+b}{2}$. We decompose
  \begin{equation*}
    \chi^t_{\epsilon}([a,b]_{\leq}^2) - \expect[\chi^t_{\epsilon}([a,b]_{\leq}^2)]
    = \sum_{j=1}^3 \{\chi^t_{\epsilon}(A_j) - \expect[\chi^t_{\epsilon}(A_j)]\},
  \end{equation*}
  where $A_1 \defby [a, c]^2_{\leq}$, $A_2 \defby [c, b]^2_{\leq}$ and $A_3 \defby [a, c] \times [c, b]$.
  We already know $\chi^t_{\epsilon}(A_3)$ converges.
  We will consider the convergence of $\chi^t_{\epsilon}(A_1) - \expect[\chi^t_{\epsilon}(A_1)]$,
  as the convergence of $\chi^t_{\epsilon}(A_2) - \expect[\chi^t_{\epsilon}(A_2)]$ can be proved similarly thanks to
  the reversibility of $X^t$.
  By Lemma \ref{lem:renormalization_constant_of_SILT_of_BB} and Remark \ref{rem:renormalization_constant_of_SILT_of_BM}, we have
  \begin{equation*}
    \big\{ \expect[\chi_{\epsilon}^t(A_1)] - \expect[\chi_{\epsilon'}^t(A_1)] \big\}
    - \big\{ \expect[\beta_{\epsilon}(A_1)] - \expect[\beta_{\epsilon'}(A_1)] \big\} = o(1)
  \end{equation*}
  as $\epsilon, \epsilon' \to 0$.
  Therefore, combined with Lemma \ref{lem:girsanov_brownian_bridge}, we obtain
  \begin{multline*}
    \expect\big[ \abs*{\{\chi^t_{\epsilon}(A_1) - \expect[\chi^t_{\epsilon}(A_1)]\} -
    \{\chi^t_{\epsilon'}(A_1) - \expect[\chi^t_{\epsilon'}(A_1)]\} }^p \big] \\
    \leq \frac{\max\{2^{p-1}, 1\} t}{t-c} \expect\big[ \abs*{\{\beta_{\epsilon}(A_1) - \expect[\beta_{\epsilon}(A_1)]\} -
    \{\beta_{\epsilon'}(A_1) - \expect[\beta_{\epsilon'}(A_1)]\} }^p \big] + o(1) = o(1),
  \end{multline*}
  which complete the proof of (ii).

  (iii) Let $\mu \in (0, \infty)$ and $\delta \in (0, \frac{1}{4})$. We decompose
  \begin{equation}\label{eq:decomposition_for_chi}
    \chi^1_{\epsilon}([0,1]_{\leq}^2) - \expect[\chi^1_{\epsilon}([0,1]_{\leq}^2)]
    = \sum_{j=1}^4 \{\chi^1_{\epsilon}(A_j) - \expect[\chi^1_{\epsilon}(A_j)]\},
  \end{equation}
  where
  \begin{align*}
    &A_1 \defby [0, 1-\delta]_{\leq}^2, \,\, A_2 \defby [1-\delta, 1]_{\leq}^2, \\
    &A_3 \defby [\delta, 1-\delta] \times [1-\delta, 1], \,\, A_4 \defby [0, \delta] \times [1-\delta, 1].
  \end{align*}
  By Lemma \ref{lem:renormalization_constant_of_SILT_of_BB} and Remark \ref{rem:renormalization_constant_of_SILT_of_BM},
  \begin{equation*}
    \chi_{\epsilon}^1(A_1) - \expect[ \chi^1_{\epsilon}(A_1)]
    = \chi_{\epsilon}^1(A_1) - \expect[ \beta_{\epsilon}(A_1)]  + O(1).
  \end{equation*}
  Therefore, by Lemma \ref{lem:girsanov_brownian_bridge},
  \begin{equation*}
    \sup_{\epsilon \in (0, \infty)} \expect[ e^{\mu (\chi_{\epsilon}^1(A_1) - \expect[ \chi^1_{\epsilon}(A_1)] ) }]
    \lesssim_{\delta} \sup_{\epsilon \in (0, \infty)} \expect[e^{\mu (\beta_{\epsilon}(A_1) - \expect[\beta_{\epsilon}(A_1)])}].
  \end{equation*}
  We have a similar estimate of $\chi_{\epsilon}^1(A_2)$. To evaluate $\chi_{\epsilon}^1(A_3)$, we set
  \begin{equation*}
    \tilde{A}_3 \defby [\delta, 1 - \delta] \times [0, \delta],
    \quad
    \hat{A}_3 \defby [0, \delta] \times [0, 1 - 2 \delta].
  \end{equation*}
  We apply Lemma \ref{lem:girsanov_brownian_bridge}, \eqref{eq:alpha_and_beta} and \eqref{eq:uniform_integrability_of_e_to_alpha}
  to obtain
  \begin{equation*}
    \sup_{\epsilon \in (0, \infty)} \expect[e^{\mu \chi_{\epsilon}^1(A_3)}]
    \leq \delta^{-1} \sup_{\epsilon \in (0, \infty)} \expect[e^{\mu \beta_{\epsilon}(\tilde{A}_3)}]
    = \delta^{-1} \sup_{\epsilon \in (0, \infty)} \expect[e^{\mu \alpha_{\epsilon}(\hat{A}_3)}]
    \leq \delta^{-1}\expect[e^{\mu \alpha(\hat{A}_3)}] .
  \end{equation*}
  Using \eqref{eq:le_gall_moment_formula} and the scaling property of $\alpha$, we obtain
  \begin{align*}
    \expect[e^{\mu \alpha(A_3)}]
    &= \sum_{m=0}^{\infty} \frac{\mu^m}{m!} \expect[\alpha([0, \delta] \times [0, 1-2\delta])^m] \\
    &\leq \sum_{m=0}^{\infty} \frac{\mu^m}{m!} \expect[\alpha([0, \delta]^2)^m]^{\frac{1}{2}}
    \expect[\alpha([0, 1-2\delta])^m]^{\frac{1}{2}} \\
    &\leq \sum_{m=0}^{\infty} \frac{(\sqrt{\delta} \mu)^m}{m!} \expect[\alpha([0, 1]^2)^m]
    = \expect[e^{\sqrt{\delta} \mu \alpha([0,1]^2)} ].
  \end{align*}
  In particular, we note that $\expect[\chi^1_{\epsilon}(A_3)]$ is bounded.
  Finally, evaluation of $\chi^1_{\epsilon}(A_4)$ is similar to that of $\chi^t_{\epsilon}([0, \delta] \times [t-\delta, t])$ in
  (i). Indeed,
  \begin{align*}
    &\expect[ e^{\mu \chi^1_{\epsilon}(A_4)} ] \\
      &= \int_{\R^2} dx p_{\frac{1}{4}}(x) \frac{1}{1 - 2 \delta}
      \expect\Big[ \exp\Big( -\frac{4 \delta \abs{x}^2}{1-2\delta} -\frac{\abs{B_{\delta}}^2 + \abs{\tilde{B}_{\delta}}^2}
      {1-2\delta} + \frac{2 \inp{x}{B_{\delta} + \tilde{B}_{\delta}}}{1-2\delta} \Big)
       e^{\mu \alpha_{\epsilon}([0, \delta]^2)}  \Big] \\
    &\lesssim_{\delta}  \expect[ e^{2\mu \alpha_{\epsilon}([0, \delta]^2)} ]^{\frac{1}{2}}
    \leq \expect[ e^{2 \delta \mu \alpha([0, 1]^2)} ]^{\frac{1}{2}}.
  \end{align*}

  Suppose $\lambda \in (0, \kappa^{-1})$ and $p, q \in (1, \infty)$ satisfy $\lambda p < \kappa^{-1}$ and $p^{-1} + q^{-1} = 1$.
  In view of the decomposition \eqref{eq:decomposition_for_chi},
  H\"older's inequality yields
  \begin{multline*}
    \expect\Big[ \exp \Big\{\lambda (\chi_{\epsilon}^1([0,1]_{\leq}^2) -
        \expect[\chi_{\epsilon}^1([0,1]_{\leq}^2)]) \Big\}\Big] \\
    \leq \expect\Big[ \exp \Big\{p \lambda (\chi_{\epsilon}(A_1) -
    \expect[\chi_{\epsilon}(A_1)]) \Big\}\Big]^{\frac{1}{p}}
    \prod_{j=2}^4 \expect\Big[ \exp \Big\{3q \lambda  (\chi_{ \epsilon}(A_j) -
    \expect[\chi_{\epsilon}(A_j)]) \Big\}\Big]^{\frac{1}{3q}}.
  \end{multline*}
  We set
  \begin{equation*}
    \bar{\beta}_{\epsilon}(A) \defby
    \beta_{\epsilon}(A) - \expect[\beta_{\epsilon}(A)].
  \end{equation*}
  By the above computations, the right hand side is bounded by
  \begin{equation*}
    C_{\delta, p} \expect[e^{p \lambda \bar{\beta}_{\epsilon}(A_1)}]^{\frac{1}{p}}
    \expect[e^{3q \lambda \bar{\beta}_{\epsilon}(A_2)}]^{\frac{1}{3q}}
    \expect[e^{3q \sqrt{\delta} \lambda \alpha([0, 1]^2)}]^{\frac{1}{3q}}
    \expect[e^{6q \delta \lambda \alpha([0, 1]^2)}]^{\frac{1}{3q}}
  \end{equation*}
  uniformly over $\epsilon \in (0, \infty)$.
  By the scaling property of the Brownian motion, we have
  \begin{equation*}
    \expect[e^{p \lambda \bar{\beta}_{\epsilon}(A_1)}]
    = \expect[e^{p(1 - \delta) \lambda \bar{\beta}_{\delta^{-1} \epsilon}([0, 1]_{\leq}^2)}],
    \quad
    \expect[e^{3 q \lambda \bar{\beta}_{\epsilon}(A_2)}]
    = \expect[e^{3q  \delta \lambda \bar{\beta}_{\delta^{-1} \epsilon}([0, 1]_{\leq}^2)}].
  \end{equation*}
  By making $\delta$ sufficiently small and taking \eqref{eq:large_deviation_of_alpha} into account,
  it suffices to show
  \begin{equation}\label{eq:uniform_integrability_of_e_to_beta}
    \sup_{\epsilon \in (0, \infty)} \expect[e^{\lambda \bar{\beta}_{\epsilon}([0,1]_{\leq}^2)}] < \infty
  \end{equation}
  for every $\lambda \in (0, \kappa^{-1})$.
  According to \cite[Lemma A.1]{gu_moments_2018}, there exists $\lambda_0 \in (0, \kappa^{-1})$ such that
  \eqref{eq:uniform_integrability_of_e_to_beta} holds for $\lambda \in (0, \lambda_0)$.
  Let $\lambda \in (0, \kappa^{-1})$ and $p, q \in (1, \infty)$ satisfy $p^2 \lambda < \kappa^{-1}$ and $p^{-1} + q^{-1} = 1$.
  Take large $n \in \N$, which will be determined later.
  \begin{figure}
    \centering
    \begin{tikzpicture}[scale=0.5]
      \fill[blue!30!white] (0,8)--(0,1)--(1,1)--(1,2)--(2,2)--(2,3)--(3,3)--(3,4)--(4,4)--(4,5)--(5,5)--(5,6)--(6,6)--(6,7)--(7,7)--(7,8)--(8,8);
      \foreach \x in {0,1,...,7}
        \fill[green!30!white] (\x,\x)--($(\x,\x)+(0,1)$)--($(\x,\x)+(1,1)$);
      \node at (2, 6) [rectangle, fill=blue!30!white] {\color{blue} $A$};
      \node [below] at (3,2.5) {\color{green!50!black} $B_k$};
    \end{tikzpicture}
    \caption{$A$ and $B_k$ for $n=3$.} \label{fig:A_and_B_k}
  \end{figure}
  We decompose
  \begin{equation*}
    \bar{\beta}_{\epsilon}([0,1]_{\leq}^2)
    = \bar{\beta}_{\epsilon}(A)
    + \sum_{k=1}^{2^n} \bar{\beta}_{\epsilon}(B_k),
  \end{equation*}
  where (see Figure \ref{fig:A_and_B_k})
  \begin{align*}
    A &\defby \bigcup_{m=1}^n \bigcup_{k=1}^{2^{m-1}} \Big[ \frac{2k-1}{2^m}, \frac{2k}{2^m} \Big]^2, \\
    B_k &\defby \Big[\frac{k-1}{2^n}, \frac{k}{2^n}\Big]^2_{\leq} \quad (k=1, 2, \ldots, 2^n).
  \end{align*}
  Then, since $\{\beta_{\epsilon}(B_k)\}_{k=1}^{2^n}$ are i.i.d., H\"older's inequality yields
  \begin{equation*}
    \expect[e^{\lambda\bar{\beta}_{\epsilon}([0,1]_{\leq}^2)}]
    \leq \expect[e^{p\lambda\bar{\beta}_{\epsilon}(A)}]^{\frac{1}{p}}
    \expect[e^{2^{-n} q\lambda\bar{\beta}_{\epsilon}([0,1]_{\leq}^2) }]^{\frac{2^n}{q}}.
  \end{equation*}
  Since $\lim_{\epsilon \to 0+} \expect[\beta_{\epsilon}(A)] = \expect[\beta(A)] < \infty$ and
  $\expect[e^{p \lambda \beta_{\epsilon}(A)}] \leq \expect[e^{p \lambda \beta(A)}]$
  by \eqref{eq:uniform_integrability_of_e_to_alpha},
  if $2^n > q\lambda \lambda_0^{-1}$,
  \eqref{eq:uniform_integrability_of_e_to_beta} follows once we prove $\expect[e^{p \lambda \beta (A)}] < \infty$.
  However, since $\beta(A) = \expect[\beta(A)] + \gamma([0,1]_{\leq}^2) - \sum_{k=1}^{2^n} \gamma(B_k)$, we obtain
  \begin{equation*}
    \expect[e^{p \lambda  \beta(A)}] \leq e^{p \lambda  \expect[\beta(A)]}
    \expect[e^{p^2 \lambda  \gamma([0,1]_{\leq}^2)}]^{\frac{1}{p}} \expect[e^{-2^{-n} p q \lambda \gamma([0,1]_{\leq}^2)}]^{\frac{2^n}{q}}.
  \end{equation*}
  It remains to observe $\expect[e^{-2^{-n} q \lambda \gamma([0,1]_{\leq}^2)}] < \infty$ by \cite[Theorem 4.3.2]{chen_random_2010}.
\end{proof}
\begin{remark}\label{rem:uniform_integrability_of_e_to_alpha_bridge}
  As with the proof of Theorem \ref{thm:construction_of_SILT_of_BB}-(iii), we can prove
  \begin{equation*}
    \sup_{\epsilon \in (0, \infty)} \expect \Big[ \exp\Big( \lambda \int_0^t\int_0^t p_{\epsilon}(X_s^{t,1} - X_r^{t,2}) dr ds \Big) \Big]
    < \infty
  \end{equation*}
  for $\lambda \in (0, \kappa^{-1})$, where $\{X^{t,j}\}_{j=1, 2}$ are i.i.d. copies of $X^t$.
\end{remark}
\begin{definition}
  For $0 \leq a \leq b \leq c \leq d \leq t$,
  we set
  \begin{align*}
    \chi^t([a,b] \times [c,d]) &\defby \lim_{\epsilon \to 0+} \chi^t_{\epsilon}([a,b] \times [c, d]), \\
    \zeta^t([a,b]_{\leq}^2) &\defby \lim_{\epsilon \to 0+} \chi^t_{\epsilon}([a,b]_{\leq}^2) - \expect[\chi^t_{\epsilon}([a,b]_{\leq}^2)]
  \end{align*}
  and $\zeta \defby \zeta^1([0,1]_{\leq}^2)$.
\end{definition}

By the scaling property of $X^t$, we have $\zeta^t([0,t]_{\leq}^2) \dequal t \zeta^1([0,1]_{\leq}^2)$.
 By Theorem \ref{thm:construction_of_SILT_of_BB}-(iii), we have
$\expect[e^{\lambda \zeta}] < \infty$ for $\lambda \in (0, \kappa^{-1})$.
In the next theorem, we show that this is critical.
\begin{theorem}\label{thm:right_tail_of_SILT_of_BB}
  We have
  \begin{equation*}
    \lim_{t \to \infty} \frac{1}{t} \log \P(\zeta \geq t) = -\kappa^{-1}.
  \end{equation*}
\end{theorem}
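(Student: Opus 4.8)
We need to show $\lim_{t\to\infty} t^{-1}\log\P(\zeta \ge t) = -\kappa^{-1}$, where $\zeta = \zeta^1([0,1]^2_{\le})$ is the renormalized SILT of the Brownian bridge. The upper bound $\limsup_{t\to\infty} t^{-1}\log\P(\zeta\ge t) \le -\kappa^{-1}$ is immediate from the exponential integrability already established in Theorem~\ref{thm:construction_of_SILT_of_BB}-(iii): since $\expect[e^{\lambda\zeta}] < \infty$ for every $\lambda \in (0,\kappa^{-1})$, Chebyshev gives $\P(\zeta\ge t) \le e^{-\lambda t}\expect[e^{\lambda\zeta}]$, hence $t^{-1}\log\P(\zeta\ge t) \le -\lambda$; letting $\lambda \uparrow \kappa^{-1}$ finishes it. The real work is the lower bound.

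**Lower bound strategy.** The plan is to transfer the known large deviation lower bound from $\alpha$ (the mutual intersection local time, see \eqref{eq:large_deviation_of_alpha}) or from $\gamma$ (the SILT of Brownian motion) to $\zeta$. The cleanest route is to isolate a sub-block of the bridge that, conditionally, behaves like an independent pair of bridges and hence carries an $\alpha$-type intersection quantity. Concretely, fix a small $\delta \in (0,\tfrac14)$ and write, as in the proof of Theorem~\ref{thm:construction_of_SILT_of_BB},
\begin{equation*}
  \zeta = \zeta^1([0,1]^2_{\le}) = \{\chi^1(A_1) - \expect[\chi^1(A_1)]\} + \{\chi^1(A_2) - \expect[\chi^1(A_2)]\} + \chi^1(A_3) + \chi^1(A_4),
\end{equation*}
with $A_4 = [0,\delta]\times[1-\delta,1]$. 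Conditioning on $X^1_{1/2}=x$, the two halves of the bridge are independent bridges, and $\chi^1(A_4)$ becomes, up to a Girsanov density that is bounded below on a set of positive probability, a copy of $\alpha_\epsilon([0,\delta]^2)$ in the limit — i.e. a multiple of $\delta\,\alpha([0,1]^2)$ by scaling. Thus on a good event one has $\zeta \ge c\,\delta\,\alpha([0,1]^2) - (\text{the other three terms})$. The other three terms $\chi^1(A_1)-\expect[\chi^1(A_1)]$, $\chi^1(A_2)-\expect[\chi^1(A_2)]$, $\chi^1(A_3)$ are each controlled in $L^p$ or exponentially (uniformly in $\epsilon$) by the estimates already derived, so they contribute only lower-order corrections to the large deviation rate. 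Applying \eqref{eq:large_deviation_of_alpha}, which gives $\P(\alpha([0,1]^2)\ge s) \ge e^{-(\kappa^{-1}+o(1))s}$, with $s = t/(c\delta)$ — being careful that the Girsanov weight and the independence of $A_4$ from the remainder cost only a multiplicative constant, not an exponential rate — yields
\begin{equation*}
  \liminf_{t\to\infty}\frac{1}{t}\log\P(\zeta\ge t) \ge -\frac{\kappa^{-1}}{c\delta}.
\end{equation*}
This is not yet sharp: the factor $c\delta$ is lossy. To remove it, I would instead use a full-strength scaling/subadditivity argument: partition $[0,1]^2_{\le}$ into many diagonal dyadic sub-squares $B_k$ plus the off-diagonal remainder $A$ (exactly the decomposition already in the excerpt, $\bar\beta_\epsilon([0,1]^2_{\le}) = \bar\beta_\epsilon(A) + \sum_k \bar\beta_\epsilon(B_k)$), note that $\zeta$ relates to these pieces, and exploit that the SILT on each diagonal block is — conditionally on the endpoint locations — a scaled copy of $\gamma$ on a small interval, whose large deviations match those of $\gamma([0,1]^2_{\le})$, i.e. rate $-\kappa^{-1}$ after scaling. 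Alternatively, and perhaps most efficiently, absolute continuity of $X^1$ with respect to $B$ on $[0,1-\eta]$ (Lemma~\ref{lem:girsanov_brownian_bridge}) lets one compare $\P(\zeta^1([0,1-\eta]^2_{\le}) \ge t)$ directly with $\P(\gamma([0,1-\eta]^2_{\le}) \ge t)$ up to a bounded density factor; since $\gamma([0,1-\eta]^2_{\le}) \dequal (1-\eta)\gamma([0,1]^2_{\le})$ has rate $-((1-\eta)\kappa)^{-1}$, and $\zeta - \zeta^1([0,1-\eta]^2_{\le})$ is handled as a lower-order term, letting $\eta\downarrow 0$ gives the matching constant $-\kappa^{-1}$.

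**Main obstacle.** The delicate point is the lower bound, and within it, ensuring that the Girsanov density in Lemma~\ref{lem:girsanov_brownian_bridge} does not degrade the exponential rate: the density $(\tfrac{t}{t-u})^{d/2}\exp(-\tfrac{|x|^2 u}{2t(t-u)} - \tfrac{|B_u|^2}{2(t-u)} + \tfrac{\langle x, B_u\rangle}{t-u})$ is bounded below only on an event of fixed positive probability (say $\{|B_u| \le K\}$ intersected with a fixed ball for the terminal point), and one must check that intersecting the large-deviation event with this fixed-probability event costs only a constant. This is standard but requires care: the large deviation event for $\alpha$ or $\gamma$ is asymptotically independent of the "near the right endpoint" data that the Girsanov density depends on, so the conditional probability of the good density event, given the large deviation, stays bounded away from $0$. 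The second obstacle is bookkeeping: one must verify that subtracting the three control terms $\{\chi^1(A_j) - \expect[\chi^1(A_j)]\}$ genuinely costs nothing at the level of the rate, which follows because each has a uniformly bounded exponential moment (from parts (i)-(iii) of Theorem~\ref{thm:construction_of_SILT_of_BB} and the estimates in its proof), so $\P(\text{sum of the three} < -M)$ decays faster than any exponential, or is at worst absorbed by a Hölder split with exponent tending to $1$. Once these two points are handled, sending the auxiliary parameters ($\delta$ or $\eta$) to their limits closes the gap and gives exactly $-\kappa^{-1}$.
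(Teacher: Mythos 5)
Your upper bound is exactly the paper's: Chebyshev plus the exponential integrability from Theorem~\ref{thm:construction_of_SILT_of_BB}-(iii), letting $\lambda\uparrow\kappa^{-1}$. The issue is the lower bound. Your first route (extracting $c\,\delta\,\alpha([0,1]^2)$ from the corner block $A_4$) you correctly flag as lossy, so the proof must rest on the second route: Girsanov on $[0,1-\eta]$ to compare $\P(\zeta^1([0,1-\eta]_{\leq}^2)\ge t)$ with $\P(\gamma([0,1-\eta]_{\leq}^2)\ge t)$ ``up to a bounded density factor.'' But the Radon--Nikodym density in Lemma~\ref{lem:girsanov_brownian_bridge} is, for the standard bridge on $[0,1-\eta]$, equal to $\eta^{-1}\exp(-|B_{1-\eta}|^2/(2\eta))$: it is bounded \emph{above}, which is why Girsanov works throughout the paper for upper bounds and $L^p$ estimates, but it is \emph{not} bounded below. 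To get a lower bound you must restrict to $\{|B_{1-\eta}|\le K\}$ and then show
\begin{equation*}
  \P\big(\gamma([0,1-\eta]_{\leq}^2)\ge t,\ |B_{1-\eta}|\le K\big)\ \ge\ e^{-(\kappa^{-1}+o(1))t},
\end{equation*}
with $K$ fixed (independent of $t$); if $K$ is allowed to grow like $\sqrt{t}$ the prefactor $e^{-K^2/(2\eta)}$ destroys the rate. Your justification --- that the large-deviation event is ``asymptotically independent'' of the endpoint data, so the conditional probability of $\{|B_{1-\eta}|\le K\}$ stays bounded below --- is precisely the hard statement: it amounts to a localization property of the optimal strategy for the SILT large deviation, which is not available as a black box from \cite{bass_self-intersection_2004} or \cite[Theorem 4.3.1]{chen_random_2010} and is not supplied by anything proved earlier in the paper. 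As written, the lower bound does not close.

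The paper avoids this conditioning problem entirely. After rescaling ($\P(\zeta\ge n)=\P(\zeta^n([0,n]_{\leq}^2)\ge n^2)$) and discarding the diagonal blocks $\zeta^n([k-1,k]_{\leq}^2)$ (whose sum is super-exponentially negligible at scale $n^2$, by Girsanov used only in the upper-bound direction), the problem reduces to the off-diagonal functional $\sum_k\chi^n([0,k]\times[k,k+1])$. For this the lower bound is obtained by a \emph{moment comparison}: writing the $m$-th moment via Fourier transform, the Gaussian characteristic function of bridge increments dominates that of Brownian increments, namely $\expect[e^{i\sum_j\langle x_j,X^n_{s_j}-X^n_{r_j}\rangle}]\ge\expect[e^{i\sum_j\langle x_j,B_{s_j}-B_{r_j}\rangle}]$, so all moments of the bridge quantity dominate those of the Brownian one, and the moment-to-tail machinery of \cite[Theorems 1.2.7 and 1.2.9]{chen_random_2010} converts the known exponential-moment asymptotics for $\sum_k\beta([0,k]\times[k,k+1])$ into the sharp tail rate. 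You would need either to supply the missing localization argument for the conditioned Girsanov step, or to switch to a moment/characteristic-function comparison of this kind.
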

\begin{proof}
  The proof is motivated by \cite[Theorem 4.3.1]{chen_random_2010}.
  Since
  \begin{equation*}
    \P(\zeta^1([0,1]_{\leq}^2) \geq n) = \P(\zeta^n([0,n]_{\leq}^2) \geq n^2),
  \end{equation*}
   it suffices to prove
  \begin{equation}\label{eq:right_tail_of_zeta_n}
    \lim_{n \to \infty,\, n \in \N} \frac{1}{n} \log \P(\zeta^n([0,n]_{\leq}^2) \geq n^2) = - \kappa^{-1}.
  \end{equation}
  We have
  \begin{equation}\label{eq:decomposition_of_zeta_n}
    \zeta^n([0,n]_{\leq}^2) = \sum_{k=1}^n \zeta^n([k-1,k]_{\leq}^2)
    + \sum_{k=1}^{n-1}\{ \chi^n([0,k] \times [k, k+1]) - \expect[\chi^n([0,k]\times [k, k+1])] \}.
  \end{equation}
  By Lemma \ref{lem:renormalization_constant_of_SILT_of_BB},
  \begin{align*}
    \MoveEqLeft[3]
    2 \pi \sum_{k=1}^{n-1} \expect[ \chi^n([0, k] \times [k, k+1]) ] \\
    &= \sum_{k=1}^{n-1} \Big\{ \int_k^{k+1} \log s ds - \int_{n-k-1}^{n-k} \log s ds
    - \int_0^1 \log s ds + \int_{n-1}^n \log s ds \Big\}\\
    &= \int_1^n \log s ds - \int_0^{n-1} \log s ds
    - (n-1) \int_0^1 \log s ds + (n-1)\int_{n-1}^n \log s ds \\
    &=  n \int_{n-1}^n \log s ds - n \int_0^1 \log s ds
  \end{align*}
  and hence
  \begin{equation}\label{eq:sum_of_expect_of_chi}
    \sum_{k=1}^{n-1} \expect[ \chi^n([0, k] \times [k, k+1]) ] = o(n^2)
  \end{equation}
  For every $\delta > 0$ and $n \geq 3$, by setting $\tilde{n} \defby \lfloor \frac{n}{2} \rfloor + 1$,
  \begin{align*}
  \P\Big( \Big \lvert\sum_{k=1}^n \zeta^n([k-1,k]_{\leq}^2) \Big\rvert \geq \delta n^2\Big)
  &\leq 2 \P\Big( \Big \lvert\sum_{k=1}^{\tilde{n}} \zeta^n([k-1,k]_{\leq}^2) \Big\rvert \geq \frac{\delta n^2}{2} \Big) \\
  &\leq 8 \P\Big( \Big \lvert\sum_{k=1}^{\tilde{n}} \gamma([k-1,k]_{\leq}^2) \Big\rvert \geq \frac{\delta n^2}{2} + o(n^2) \Big) \\
  &\leq 8 e^{-\frac{\lambda \delta n^2}{2} + o(n^2)} \expect\Big[ \exp\Big( \lambda \sum_{k=1}^{\tilde{n}}
  \abs{\gamma([k-1,k]_{\leq}^2)} \Big) \Big],
  \end{align*}
  where $\lambda \in (0, \kappa^{-1})$ so that $\expect[e^{\lambda \abs{\gamma([0,1]_{\leq}^2)}}] < \infty$.
  In the first inequality, we applied the reversibility of $X^n$. In the second inequality, we applied Lemma \ref{lem:girsanov_brownian_bridge}
  and the estimate
  \begin{equation*}
    \sum_{k=1}^{\tilde{n}} \{ \expect[\chi_{\epsilon}^n([k-1,k]_{\leq}^2)] - \expect[\beta_{\epsilon}([k-1,k]_{\leq}^2)] \}
    = O(n \log n),
  \end{equation*}
  which follows from Lemma \ref{lem:renormalization_constant_of_SILT_of_BB} and Remark \ref{rem:renormalization_constant_of_SILT_of_BM}.
  In the third inequality, we applied Chebyshev's inequality.
  Since $\{\gamma([k-1,k]_{\leq}^2)\}_{k=1}^n$ are independent and identically distributed,
  \begin{equation*}
    \expect\Big[ \exp\Big( \lambda \sum_{k=1}^{\tilde{n}} \gamma([k-1,k]_{\leq}^2) \Big) \Big]
    = \expect[ e^{\lambda \gamma([0,1]_{\leq}^2)} ]^{\tilde{n}}.
  \end{equation*}
  Hence, we obtain
  \begin{equation}\label{eq:tail_of_sum_of_zeta}
    \lim_{n \to \infty} \frac{1}{n} \log \P\Big( \Big \lvert\sum_{k=1}^n \zeta^n([k-1,k]_{\leq}^2) \Big\rvert \geq \delta n^2\Big)
    = - \infty \quad \mbox{for every } \delta > 0.
  \end{equation}
  To prove \eqref{eq:right_tail_of_zeta_n}, by \eqref{eq:decomposition_of_zeta_n},
  \eqref{eq:sum_of_expect_of_chi} and \eqref{eq:tail_of_sum_of_zeta},
  it suffices to show
  \begin{equation*}
    \lim_{n \to \infty} \frac{1}{n}
    \log \P \Big( \sum_{k=1}^{n-1} \chi^n([0, k] \times [k, k+1]) \geq \rho n^2 \Big) = -\rho \kappa^{-1},
    \quad \mbox{for every } \rho \in (0, \infty).
  \end{equation*}
  In view of \cite[Theorem 1.2.7]{chen_random_2010}, it comes down to proving
  \begin{align}
    \label{eq:liminf_of_chi_n}
    \liminf_{n \to \infty} \frac{1}{n} \log\sum_{m=0}^{\infty} \frac{\theta^m}{m!}
    \left[ \expect\Big[ \Big\{ \sum_{k=1}^{n-1} \chi^n([0, k] \times [k, k+1]) \Big\}^m \Big] \right]^{\frac{1}{2}}
    \geq \frac{1}{2} \theta^2 \kappa, \\
    \label{eq:limsup_of_chi_n}
    \limsup_{n \to \infty} \frac{1}{n} \log\sum_{m=0}^{\infty} \frac{\theta^m}{m!}
    \left[ \expect\Big[ \Big\{ \sum_{k=1}^{n-1} \chi^n([0, k] \times [k, k+1]) \Big\}^m \Big] \right]^{\frac{1}{2}}
    \leq \frac{1}{2} \theta^2 \kappa,
  \end{align}

  We claim for every $n, m \in \N$
  \begin{equation}\label{eq:ineq_chi_greater_than_beta}
    \expect\Big[ \Big\{ \sum_{k=1}^{n-1} \chi^n([0, k] \times [k, k+1]) \Big\}^m \Big]
    \geq \expect\Big[ \Big\{ \sum_{k=1}^{n-1} \beta([0, k] \times [k, k+1]) \Big\}^m \Big],
  \end{equation}
  which proves \eqref{eq:liminf_of_chi_n} since we have
  \begin{equation*}
    \lim_{n \to \infty} \frac{1}{n} \log\sum_{m=0}^{\infty} \frac{\theta^m}{m!}
    \left[ \expect\Big[ \Big\{ \sum_{k=1}^{n-1} \beta([0, k] \times [k, k+1]) \Big\}^m \Big] \right]^{\frac{1}{2}}
    = \frac{1}{2} \theta^2 \kappa,
  \end{equation*}
  as shown in the proof of \cite[Theorem 4.3.1]{chen_random_2010}.
  To prove \eqref{eq:ineq_chi_greater_than_beta}, we set $A_n \defby \cup_{k=1}^{n-1} [0,k] \times[k, k+1]$ and observe
  \begin{align*}
    \MoveEqLeft[3]
    \expect\Big[ \Big\{ \sum_{k=1}^{n-1} \chi^n([0, k] \times [k, k+1]) \Big\}^m \Big] \\
    &= \lim_{\epsilon \to 0+} \expect[ \chi^n_{\epsilon}(A_n)^m] \\
    &= \lim_{\epsilon \to 0+} \expect\Big[ \Big( \int_{\R^2 \times A_n} e^{-\epsilon \abs{x}^2}
    e^{i\inp{x}{X^n_s - X^n_r}} dx dr ds \Big)^m \Big] \\
    &= \lim_{\epsilon \to 0+} \int_{(\R^2 \times A_n)^m} e^{-\epsilon \sum_{j=1}^m \abs{x_j}^2}
    \expect\Big[e^{i \sum_{j=1}^m \inp{x_j}{X^n_{s_j} - X^n_{r_j}}}\Big] \prod_{j=1}^m dx_j dr_j ds_j \\
    &\geq \lim_{\epsilon \to 0+} \int_{(\R^2 \times A_n)^m} e^{-\epsilon \sum_{j=1}^m \abs{x_j}^2}
    \expect\Big[e^{i \sum_{j=1}^m \inp{x_j}{B_{s_j} - B_{r_j}}}\Big] \prod_{j=1}^m dx_j dr_j ds_j \\
    &= \expect\Big[ \Big\{ \sum_{k=1}^{n-1} \beta([0, k] \times [k, k+1]) \Big\}^m \Big],
  \end{align*}
  where in the inequality we applied the estimate
  \begin{align*}
    \expect\Big[e^{i \sum_{j=1}^m \inp{x_j}{X^n_{s_j} - X^n_{r_j}}}\Big]
    &= \exp\Big( -\frac{1}{2} \expect\Big[ \Big(\sum_{j=1}^m \inp{x_j}{X^n_{s_j} - X^n_{r_j}}\Big)^2\Big] \Big) \\
    &= \exp\Big( -\frac{1}{2} \expect\Big[ \Big(\sum_{j=1}^m \inp{x_j}{B_{s_j} -B_{r_j}}\Big)^2\Big]
    + \frac{1}{2n} \Big\lvert \sum_{j=1}^m (s_j - r_j) x_j \Big\rvert^2 \Big) \\
    &\geq \exp\Big( -\frac{1}{2} \expect\Big[ \Big(\sum_{j=1}^m \inp{x_j}{B_{s_j} -B_{r_j}}\Big)^2\Big] \Big)
    = \expect\Big[e^{i \sum_{j=1}^m \inp{x_j}{B_{s_j} - B_{r_j}}}\Big].
  \end{align*}

  We move to the proof of \eqref{eq:limsup_of_chi_n}. To this end, we will prove
  \begin{equation}\label{eq:limsup_of_zeta}
    \limsup_{t \to \infty}\frac{1}{t} \log \P(\zeta \geq t) \leq - \kappa^{-1}.
  \end{equation}
  Indeed, if \eqref{eq:limsup_of_zeta} holds, then by the calculation in the first part we obtain
  \begin{equation*}
    \limsup_{n \to \infty} \frac{1}{n}
    \log \P \Big( \sum_{k=1}^{n-1} \chi^n([0, k] \times [k, k+1]) \geq \rho n^2 \Big) \leq -\rho \kappa^{-1}, \quad \rho \in (0, \infty),
  \end{equation*}
  and hence \eqref{eq:limsup_of_chi_n} follows from \cite[Theorem 1.2.9]{chen_random_2010}.
  However, by Chebyshev's inequality we have $\P(\zeta \geq t) \leq e^{-\lambda t} \expect[e^{\lambda \zeta}]$
  and hence $\limsup_{t \to \infty} \frac{1}{t} \log \P(\zeta \geq t) \leq -\lambda$ for every $\lambda \in (0, \kappa^{-1})$,
  which proves \eqref{eq:limsup_of_zeta}.
\end{proof}

\section{Construction of the IDS}\label{sec:ids}
The goal of this section is to provide the proof of Theorem \ref{thm:main}.
We begin with the construction of the IDS.
\begin{theorem}\label{thm:convergence_to_IDS}
  $\wnP$-almost surely, the family of measures $\{N_L(d\lambda)\}_{L \in \mathbb{I}}$, defined by \eqref{eq:def_of_N_L},
  converges vaguely to some limit $N(d\lambda)$ as $L \to \infty$.
  The limit measure is deterministic and characterized by the Laplace transform
  \begin{equation}\label{eq:laplace_transform_of_N}
    \int_{\R} e^{-t \lambda} N(d\lambda)
    = \frac{t^{\frac{t}{2\pi} - 1}}{2\pi} \expect[ e^{t \zeta} ],
    \quad t \in (0, \kappa^{-1}),
  \end{equation}
  where $\zeta = \zeta^1([0,1]_{\leq}^2)$ is the SILT of the Brownian bridge $X^1$ constructed in Section \ref{sec:brownian_bridge} and
  $\kappa$ is the best constant of Ladyzhenskaya's inequality \eqref{eq:GN_ineq}.
  In particular,
  \begin{equation}\label{eq:divergence_of_exponential_moment_of_N}
    \int_{\R} e^{-t \lambda} N(d\lambda) = \infty \qquad \mbox{for } t > \kappa^{-1}.
  \end{equation}
\end{theorem}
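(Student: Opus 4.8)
Here is how I would prove Theorem~\ref{thm:convergence_to_IDS}.

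\medskip

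The plan is the path-integral approach: show that the Laplace transforms $t\mapsto\int_{\R}e^{-t\lambda}N_L(d\lambda)$ converge $\wnP$-almost surely by applying a multiparameter ergodic theorem to the heat kernel $u_L^{x,0}(t,x)$ of $\H_L$, identify the limit through the self-intersection local time of the Brownian bridge, and then upgrade convergence of Laplace transforms to vague convergence of the measures. The starting point is \eqref{eq:trace_of_exp_of_AH}, which gives $\int_{\R}e^{-t\lambda}N_L(d\lambda)=L^{-2}\int_{Q_L}u_L^{x,0}(t,x)\,dx$.

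I would first record a moment bound. Conditioning \eqref{eq:feynman-kac} on $B_t=0$ turns $B$ into the bridge $X^t$, and since $\int_0^t\xi_\epsilon(x+X^t_s)\,ds$ is, given $X^t$, centred Gaussian of variance $2\chi^t_\epsilon([0,t]_{\leq}^2)$,
\[
  \wnexpect\bigl[u_{L,\epsilon}^{x,0}(t,x)\bigr]=\frac1{2\pi t}\,\expect\Bigl[\indic_{\{x+X^t_{[0,t]}\subseteq Q_L\}}\exp\bigl(\chi^t_\epsilon([0,t]_{\leq}^2)-tc_\epsilon\bigr)\Bigr].
\]
By Lemma~\ref{lem:renormalization_constant_of_SILT_of_BB} (using $\int_0^t\{\log s-\log(t-s)\}\,ds=0$) the centring satisfies $\expect[\chi^t_\epsilon([0,t]_{\leq}^2)]-tc_\epsilon\to\frac t{2\pi}\log t$, hence $\chi^t_\epsilon([0,t]_{\leq}^2)-tc_\epsilon\to\zeta^t([0,t]_{\leq}^2)+\frac t{2\pi}\log t$ in probability and, by Theorem~\ref{thm:construction_of_SILT_of_BB}(iii), in $L^p(\P)$ for $p<\kappa^{-1}/t$. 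Letting $\epsilon\to0$ and then using Fatou's lemma for $u_{L,\epsilon}^{x,0}(t,x)\to u_L^{x,0}(t,x)$, the monotone limit $u_L^{0,0}(t,0)\uparrow u(t,0)$ and the scaling $\zeta^t([0,t]_{\leq}^2)\dequal t\zeta$, one gets $\wnexpect[u(t,0)]\le\frac{t^{t/(2\pi)-1}}{2\pi}\expect[e^{t\zeta}]<\infty$ for $t\in(0,\kappa^{-1})$; this finiteness is exactly what makes the ergodic theorem applicable.

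Next I would show that $\int_{\R}e^{-t\lambda}N_L(d\lambda)\to\wnexpect[u(t,0)]$ a.s.\ for each fixed $t\in(0,\kappa^{-1})$. For the upper bound, $u_L^{x,0}(t,x)\le u_\infty^{x,0}(t,x)=u_\infty^{0,x}(t,0)$ by Lemma~\ref{lem:shift_of_u_infty}, and $x\mapsto u_\infty^{0,x}(t,0)$ is a stationary field over $(\S',\wnP)$ with integrable marginal $u(t,0)$, so the multiparameter ergodic theorem gives $\limsup_L L^{-2}\int_{Q_L}u_L^{x,0}(t,x)\,dx\le\wnexpect[u(t,0)]$. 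For the lower bound I would tile $Q_L$ by $\simeq(L/L_0)^2$ disjoint translates $x_i+Q_{L_0}$; domain monotonicity of the regularised Feynman--Kac representation together with translating the box gives $u_L^{x,0}(t,x)\ge u_{L_0}^{x-x_i,-x_i}(t,x-x_i)$ for $x\in x_i+Q_{L_0}$, so $\int_{\R}e^{-t\lambda}N_L(d\lambda)\ge L^{-2}\sum_i G_{L_0}(T_{-x_i}\xi)$ with $G_{L_0}(\xi):=\int_{Q_{L_0}}u_{L_0}^{y,0}(t,y)\,dy=\sum_n e^{-t\lambda_{n,L_0}}\in L^1(\wnP)$, and the ergodic theorem for the lattice $L_0\mathbb{Z}^2$ yields $\liminf_L\int_{\R}e^{-t\lambda}N_L(d\lambda)\ge L_0^{-2}\int_{Q_{L_0}}\wnexpect[u_{L_0}^{y,0}(t,y)]\,dy$. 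To close the gap I would let $L_0\to\infty$: applying Fatou to the nonnegative quantities $\tilde u_{\infty,\epsilon}^{\delta_y}(t,y)-u_{L_0,\epsilon}^{y,0}(t,y)$, whose $\wnexpect$ equals $\frac1{2\pi t}\expect[\indic_{\{y+X^t_{[0,t]}\not\subseteq Q_{L_0}\}}\exp(\chi^t_\epsilon([0,t]_{\leq}^2)-tc_\epsilon)]$, together with $\tilde u_{\infty,\epsilon}^{\delta_y}(t,y)\to u_\infty^{y,0}(t,y)$ (Proposition~\ref{prop:pam_in_plane}), Hölder's inequality and the confinement estimate $\P(y+X^t_{[0,t]}\not\subseteq Q_{L_0})\le Ce^{-c\,\mathrm{dist}(y,\partial Q_{L_0})^2/t}$, gives $\wnexpect[u(t,0)]-\wnexpect[u_{L_0}^{y,0}(t,y)]\le C_te^{-c_t\,\mathrm{dist}(y,\partial Q_{L_0})^2}$, so the boundary layer is asymptotically negligible and $L_0^{-2}\int_{Q_{L_0}}\wnexpect[u_{L_0}^{y,0}(t,y)]\,dy\to\wnexpect[u(t,0)]$. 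Combining, $\int_{\R}e^{-t\lambda}N_L(d\lambda)\to\wnexpect[u(t,0)]$ a.s.\ for each such $t$, and by convexity in $t$ this holds simultaneously for all $t\in(0,\kappa^{-1})$ on one $\wnP$-full set.

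Finally I would identify the limit and draw the conclusions, and this last part is the hard one. On the smaller interval $(0,\kappa^{-1}/3)$ the second moment of $u_{L,\epsilon}^{x,0}(t,x)$ computes (as above) to the expectation of the exponential of $(\chi^t_\epsilon(X^{t,1})-tc_\epsilon)+(\chi^t_\epsilon(X^{t,2})-tc_\epsilon)+\int_0^t\!\int_0^t p_\epsilon(X^{t,1}_s-X^{t,2}_r)\,dr\,ds$ over two independent bridges, which Hölder's inequality with all three exponents equal to $3$ bounds uniformly in $\epsilon$ via Theorem~\ref{thm:construction_of_SILT_of_BB}(iii) and Remark~\ref{rem:uniform_integrability_of_e_to_alpha_bridge}; hence $\{u_{L,\epsilon}^{x,0}(t,x)\}_\epsilon$ is uniformly integrable, the $\epsilon\to0$ passage becomes an equality, and integrating over $Q_L$, letting $L\to\infty$ by dominated convergence and comparing with the previous step gives $\wnexpect[u(t,0)]=\frac{t^{t/(2\pi)-1}}{2\pi}\expect[e^{t\zeta}]$ on $(0,\kappa^{-1}/3)$. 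Since both sides are real-analytic on $(0,\kappa^{-1})$ (the right-hand side because $\expect[e^{t\zeta}]<\infty$ there), the identity extends to all of $(0,\kappa^{-1})$, which is \eqref{eq:laplace_transform_of_N}. For the vague convergence, $N_L([-A,A])\le e^{tA}\int_{\R}e^{-t\lambda}N_L(d\lambda)$ makes $\{N_L(d\lambda)\}$ vaguely precompact, and decomposing $\int_{\R}e^{-t\lambda}N_L(d\lambda)=\int_{[-A,A]}+\int_{|\lambda|>A}$ and bounding the tail by $\int_{\R}e^{-t'\lambda}N_L$ for $t'\in(t,\kappa^{-1})$ on $\{\lambda<-A\}$ and by $\int_{\R}e^{-s\lambda}N_L$ for $s\in(0,t)$ on $\{\lambda>A\}$ shows that every vague subsequential limit has Laplace transform $\wnexpect[u(t,0)]$ on $(0,\kappa^{-1})$; uniqueness of the Laplace transform (Appendix~\ref{sec:laplace_transform}) then forces a single deterministic limit $N$ with \eqref{eq:laplace_transform_of_N}. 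Lastly, the set where $\int_{\R}e^{-t\lambda}N(d\lambda)<\infty$ is an interval; if it extended past $\kappa^{-1}$, that Laplace transform would be analytic there, forcing $t\mapsto\expect[e^{t\zeta}]$ to continue analytically past $\kappa^{-1}$, contradicting that $\kappa^{-1}$ is the abscissa of convergence of $t\mapsto\expect[e^{t\zeta}]$ (Theorem~\ref{thm:right_tail_of_SILT_of_BB}) and hence a singular point (Appendix~\ref{sec:laplace_transform}); this gives \eqref{eq:divergence_of_exponential_moment_of_N}. The main obstacle throughout is precisely this identification: the renormalised PAM admits no workable Feynman--Kac formula, so $\wnexpect[u(t,0)]$ is reachable only through the regularised equation, and the interchange of $\lim_{\epsilon\to0}$ with $\wnexpect$ is justified by uniform integrability only on $(0,\kappa^{-1}/3)$, forcing the analytic-continuation argument that leans on the Brownian-bridge self-intersection estimates of Section~\ref{sec:brownian_bridge}.
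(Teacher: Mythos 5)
Your proposal is correct and follows the same overall architecture as the paper's proof: the path-integral approach via \eqref{eq:trace_of_exp_of_AH}, the ergodic theorem applied to the stationary field $x\mapsto u_\infty^{0,x}(t,0)$, identification of $\wnexpect[u(t,0)]$ through the SILT of the Brownian bridge on a strict subinterval of $(0,\kappa^{-1})$ using a uniform second-moment bound, analytic continuation of the Laplace-transform identity, and the abscissa-of-convergence argument for \eqref{eq:divergence_of_exponential_moment_of_N}. You deviate in two places. First, for the error term $L^{-2}\int_{Q_L}\{u_\infty^{x,0}(t,x)-u_L^{x,0}(t,x)\}\,dx$, the paper (Lemma~\ref{lem:laplace_trasform_converges}) splits $Q_L$ into a boundary layer $Q_L\setminus Q_{(1-\delta)L}$ controlled by Cauchy--Schwarz against $\wnexpect[u(t,0)^2]^{1/2}$ — which is what forces the restriction $t<(2\kappa)^{-1}$ there — and an interior where $u_L^{x,0}(t,x)\ge u_{\delta L_0}^{0,x}(t,0)$ and monotone convergence in $L_0$ closes the gap; your tiling/superadditivity lower bound combined with the quantitative bridge-confinement estimate and H\"older with exponent close to $1$ works on all of $(0,\kappa^{-1})$ and dispenses with the second moment in this step, at the cost of a more hands-on boundary estimate. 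Second, for upgrading convergence of Laplace transforms to vague convergence, the paper normalizes $N_L$ to probability measures and invokes Montel and Bochner (Lemma~\ref{lem:laplace_transform_of_probability_measures}), whereas you argue by tightness on compacts and uniqueness of subsequential limits; these are equivalent. Two points to tighten: the analytic continuation should be phrased for $\int_{\R}e^{-t\lambda}N(d\lambda)$ — a Laplace transform of a positive measure, finite on $(0,\kappa^{-1})$ by your Fatou bound and hence analytic there, which is exactly the setting of Lemma~\ref{lem:analytic_extension_of_Laplace_transform} — rather than for $\wnexpect[u(t,0)]$, whose analyticity in $t$ is not a priori clear; and the identity $\wnexpect[u(t,0)]=\frac{t^{t/(2\pi)-1}}{2\pi}\expect[e^{t\zeta}]$ is a pointwise statement at $x=0$ obtained from uniform integrability of $\{\tilde u^{\delta_0}_{\infty,\epsilon}(t,0)\}_{\epsilon}$, so no integration over $Q_L$ is needed at that stage.
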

For the proof, we adopt the path integral approach as in \cite{nakao_spectral_1977} or \cite[Section VI.1.2]{carmona1990spectral}.
The advantage of this approach is to obtain a representation \eqref{eq:laplace_transform_of_N} of the Laplace transform.
A crucial difficulty in our setting is lack of useful Feynman-Kac formula.
Before the proof, we prepare two lemmas.
Recall the definition of $u$ from Definition \ref{def:def_of_u} and that
we have $u = \tilde{u}_{\infty}^0$ by Proposition \ref{prop:pam_in_plane}.
\begin{lemma}\label{lem:second_moment_of_pam}
  For $t \in (0, (2\kappa)^{-1})$, we have $\wnexpect[u(t, 0)^2] < \infty$ and
  \begin{equation}\label{eq:u_and_SILT}
    \wnexpect[u(t, 0)] = \frac{t^{\frac{t}{2\pi} - 1}}{2\pi} \expect[ e^{t \zeta} ]
  \end{equation}.
\end{lemma}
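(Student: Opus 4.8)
The plan is to run both statements through the smoothed solutions and the Feynman-Kac formula \eqref{eq:feynman_kac_for_u_infty}, using that a Brownian motion conditioned on $B_t = 0$ is the bridge $X^t$. Setting $x = y = z = 0$ in \eqref{eq:feynman_kac_for_u_infty} gives
\[
  \tilde{u}^{\delta_0}_{\infty,\epsilon}(t,0) = \frac{1}{2\pi t}\,\expect\Big[\exp\Big(\int_0^t\{\xi_\epsilon(X^t_s) - c_\epsilon\}\,ds\Big)\Big],
\]
and, by Proposition \ref{prop:pam_in_plane} and the construction of $\tilde{u}^x_\infty$, we have $\tilde{u}^{\delta_0}_{\infty,\epsilon}(t,0) \to u(t,0)$ in $\wnP$-probability as $\epsilon \to 0+$. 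For a fixed bridge path, $\int_0^t\xi_\epsilon(X^t_s)\,ds = \inp{\xi}{\int_0^t p_{\epsilon/2}(\cdot - X^t_s)\,ds}$ is centered Gaussian under $\wnP$ with variance $\iint_{[0,t]^2}p_\epsilon(X^t_s - X^t_r)\,dr\,ds = 2\chi^t_\epsilon([0,t]_{\leq}^2)$ (by Definition \ref{def:white_noise}, the identity $p_{\epsilon/2}\conv p_{\epsilon/2} = p_\epsilon$, and the diagonal being Lebesgue-null); hence Tonelli and the Gaussian moment formula give
\[
  \wnexpect\big[\tilde{u}^{\delta_0}_{\infty,\epsilon}(t,0)\big] = \frac{1}{2\pi t}\,\expect\big[e^{\chi^t_\epsilon([0,t]_{\leq}^2) - tc_\epsilon}\big].
\]

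For the second moment, let $X^{t,1}, X^{t,2}$ be i.i.d.\ copies of $X^t$, write $\chi^t_\epsilon(X^{t,j}) \defby \iint_{[0,t]_{\leq}^2}p_\epsilon(X^{t,j}_s - X^{t,j}_r)\,dr\,ds$ (so that $\chi^t_\epsilon(X^{t,j}) \dequal \chi^t_\epsilon([0,t]_{\leq}^2)$) and put $W_\epsilon \defby \iint_{[0,t]^2}p_\epsilon(X^{t,1}_s - X^{t,2}_r)\,dr\,ds$. Applying the same Gaussian computation to $\int_0^t\xi_\epsilon(X^{t,1}_s)\,ds + \int_0^t\xi_\epsilon(X^{t,2}_s)\,ds$, whose $\wnP$-variance is $2\chi^t_\epsilon(X^{t,1}) + 2\chi^t_\epsilon(X^{t,2}) + 2W_\epsilon$, followed by Cauchy-Schwarz and independence,
\[
  \wnexpect\big[(\tilde{u}^{\delta_0}_{\infty,\epsilon}(t,0))^2\big] = \frac{1}{(2\pi t)^2}\,\expect\big[e^{(\chi^t_\epsilon(X^{t,1}) - tc_\epsilon) + (\chi^t_\epsilon(X^{t,2}) - tc_\epsilon) + W_\epsilon}\big] \leq \frac{1}{(2\pi t)^2}\,\expect\big[e^{2(\chi^t_\epsilon([0,t]_{\leq}^2) - tc_\epsilon)}\big]\,\expect\big[e^{2W_\epsilon}\big]^{1/2}.
\]
By Lemma \ref{lem:renormalization_constant_of_SILT_of_BB} (with $a = 0$, $b = t$, and $\int_0^t\{\log s - \log(t-s)\}\,ds = 0$ via $s \mapsto t-s$), $\expect[\chi^t_\epsilon([0,t]_{\leq}^2)] = tc_\epsilon + \frac{t}{2\pi}\log t + o(1)$, so $\chi^t_\epsilon([0,t]_{\leq}^2) - tc_\epsilon \to \zeta^t([0,t]_{\leq}^2) + \frac{t}{2\pi}\log t$ in $L^p(\P)$ by Theorem \ref{thm:construction_of_SILT_of_BB}-(ii). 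Together with the scaling identity $\chi^t_\epsilon([0,t]_{\leq}^2) - \expect[\chi^t_\epsilon([0,t]_{\leq}^2)] \dequal t\big(\chi^1_{\epsilon/t}([0,1]_{\leq}^2) - \expect[\chi^1_{\epsilon/t}([0,1]_{\leq}^2)]\big)$ and Theorem \ref{thm:construction_of_SILT_of_BB}-(iii), the factor $\expect[e^{2(\chi^t_\epsilon([0,t]_{\leq}^2) - tc_\epsilon)}]$ is bounded uniformly in $\epsilon$ precisely when $2t < \kappa^{-1}$; likewise, rescaling $W_\epsilon$ to $[0,1]^2$ and using Remark \ref{rem:uniform_integrability_of_e_to_alpha_bridge} bounds $\expect[e^{2W_\epsilon}]$ uniformly in $\epsilon$ when $2t < \kappa^{-1}$. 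Hence $\sup_\epsilon\wnexpect[(\tilde{u}^{\delta_0}_{\infty,\epsilon}(t,0))^2] < \infty$ for $t \in (0,(2\kappa)^{-1})$, and Fatou's lemma along a subsequence realizing the almost sure limit gives $\wnexpect[u(t,0)^2] < \infty$.

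For the first-moment identity, the $L^2(\wnP)$-bound just obtained makes $\{\tilde{u}^{\delta_0}_{\infty,\epsilon}(t,0)\}_{\epsilon}$ uniformly integrable, so convergence in probability upgrades to convergence in $L^1(\wnP)$ and therefore $\wnexpect[u(t,0)] = \lim_{\epsilon\to 0+}\wnexpect[\tilde{u}^{\delta_0}_{\infty,\epsilon}(t,0)] = \lim_{\epsilon\to 0+}\frac{1}{2\pi t}\expect[e^{\chi^t_\epsilon([0,t]_{\leq}^2) - tc_\epsilon}]$. Since $\chi^t_\epsilon([0,t]_{\leq}^2) - tc_\epsilon \to \zeta^t([0,t]_{\leq}^2) + \frac{t}{2\pi}\log t$ in $L^p(\P)$ and $\{e^{(1+\delta)(\chi^t_\epsilon([0,t]_{\leq}^2) - \expect[\chi^t_\epsilon([0,t]_{\leq}^2)])}\}_{\epsilon}$ is $L^1(\P)$-bounded for sufficiently small $\delta > 0$ (because $t < \kappa^{-1}$, again by Theorem \ref{thm:construction_of_SILT_of_BB}-(iii) and scaling), this limit equals $\frac{t^{t/2\pi}}{2\pi t}\expect[e^{\zeta^t([0,t]_{\leq}^2)}]$, and the scaling identity $\zeta^t([0,t]_{\leq}^2) \dequal t\zeta$ then yields \eqref{eq:u_and_SILT}.

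I expect the main obstacle to be the bookkeeping of the two renormalizations: one must verify that the Feynman-Kac counterterm $tc_\epsilon$ and the self-intersection renormalization $\expect[\chi^t_\epsilon([0,t]_{\leq}^2)]$ differ only by the convergent quantity $\frac{t}{2\pi}\log t + o(1)$, so that the exponential moments above stay uniformly controlled in $\epsilon$. Relatedly, the scheme succeeds only because the second-moment estimate — and with it the restriction $t < (2\kappa)^{-1}$ — upgrades convergence in probability to convergence in $L^1(\wnP)$, which is exactly what the first-moment identity requires; on the range $[(2\kappa)^{-1},\kappa^{-1})$ the identity needs a separate argument, which is why the lemma is stated only on $(0,(2\kappa)^{-1})$.
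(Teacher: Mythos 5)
Your proposal is correct and follows essentially the same route as the paper: Feynman--Kac for the mollified solution, the Gaussian computation reducing the moments to exponential moments of $\chi^t_\epsilon$ and the mutual intersection term, Cauchy--Schwarz with independence, the renormalization bookkeeping via Lemma \ref{lem:renormalization_constant_of_SILT_of_BB} and scaling, the uniform bounds from Theorem \ref{thm:construction_of_SILT_of_BB}-(iii) and Remark \ref{rem:uniform_integrability_of_e_to_alpha_bridge}, Fatou for the second moment, and uniform integrability to pass to the limit in the first moment. Your explicit identification of the two renormalizations ($tc_\epsilon$ versus $\expect[\chi^t_\epsilon([0,t]_{\leq}^2)]$, differing by $\frac{t}{2\pi}\log t + o(1)$) is exactly the step the paper carries out.
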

\begin{remark}
  The identity \eqref{eq:u_and_SILT} holds in $[0, \infty]$ for $t \in \R
  \setminus \{\kappa^{-1}\}$.
  See Corollary \ref{cor:moment_of_pam}.
\end{remark}
\begin{proof}
  Let $X^{t, j}$ ($j=1, 2$) be i.i.d. copies of $X^t$. By the Feynman-Kac formula,
  we have
  \begin{equation*}
    \wnexpect[ \tilde{u}_{\infty, \epsilon}^{\delta_0}(t, 0)^2]
    = \frac{1}{2 \pi t}
    \wnexpect\Big[ \expect\Big[ \exp \Big( \sum_{j=1}^2 \int_0^t \{ \xi_{\epsilon}(X_s^{t,j}) - c_{\epsilon} \} ds \Big)
    \Big] \Big].
  \end{equation*}
  We interchange the order or $\wnexpect$ and $\expect$ and we compute
  \begin{multline*}
    \wnexpect\Big[  \exp \Big( \sum_{j=1}^2 \int_0^t \{ \xi_{\epsilon}(X_s^{t,j}) - c_{\epsilon} \} ds \Big)
    \Big]  \\
    = \Big\{\prod_{j=1}^2 \exp\Big(\iint_{[0,t]_{\leq}^2} p_{\epsilon}(X^{t,j}_s - X^{t,j}_r) dr ds - c_{\epsilon} t \Big) \Big\} \times
    \exp\Big( \int_0^t \int_0^t p_{\epsilon}(X^{t, 1}_s - X^{t, 2}_r) dr ds \Big).
  \end{multline*}
  By Hölder's inequality and as $X^{t, 1}$ and $X^{t, 2}$ are independent,
  \begin{multline*}
    \expect\Big[
    \Big\{\prod_{j=1}^2 \exp\Big(\iint_{[0,t]_{\leq}^2} p_{\epsilon}(X^{t,j}_s - X^{t,j}_r) dr ds - c_{\epsilon} t \Big) \Big\} \times
        \exp\Big( \int_0^t \int_0^t p_{\epsilon}(X^{t, 1}_s - X^{t, 2}_r) dr ds \Big)
    \Big] \\
    \leq
    \expect\Big[
      \exp\Big(2 \iint_{[0,t]_{\leq}^2} p_{\epsilon}(X^{t}_s - X^{t}_r) dr ds - c_{\epsilon} t \Big) \Big\}
    \Big]
    \expect\Big[
    \exp\Big( 2 \int_0^t \int_0^t p_{\epsilon}(X^{t, 1}_s - X^{t, 2}_r) dr ds \Big)
    \Big]^{\frac{1}{2}}.
  \end{multline*}
  Recall the notation $\chi_{\epsilon}^t$ from
  Definition \ref{def:def_of_chi_epsilon}.
  By the scaling property of $X^t$, we have
  \begin{multline*}
    \expect\Big[
      \exp\Big(2 \iint_{[0,t]_{\leq}^2} p_{\epsilon}(X^{t}_s - X^{t}_r) dr ds - c_{\epsilon} t \Big) \Big\}
    \Big] \\
    = e^{2t(\expect[\chi^1_{t^{-1} \epsilon}([0,1]_{\leq}^2)] - c_{\epsilon})}
    \expect\big[ \exp\big\{2t (\chi_{t^{-1}\epsilon}^1([0,1]_{\leq}^2) - \expect[\chi_{t^{-1}\epsilon}^1([0,1]_{\leq}^2)]) \big\} \big].
  \end{multline*}
  By Lemma \ref{lem:renormalization_constant_of_SILT_of_BB}, we have
  $\expect[\chi^1_{t^{-1}\epsilon}([0,1]_{\leq}^2)] - c_{\epsilon} = \frac{1}{2\pi} \log t + o(1)$ as
  $\epsilon \to 0+$.
  By Theorem \ref{thm:construction_of_SILT_of_BB}-(iii),
  \begin{equation*}
    \sup_{\epsilon \in (0, \infty)}
    \expect\big[ \exp\big\{2t (\chi_{t^{-1}\epsilon}^1([0,1]_{\leq}^2) - \expect[\chi_{t^{-1}\epsilon}^1([0,1]_{\leq}^2)]) \big\} \big] < \infty
  \end{equation*}
  for $t \in (0, (2 \kappa)^{-1})$.
  Similarly, by applying Remark \ref{rem:uniform_integrability_of_e_to_alpha_bridge}, we obtain
  \begin{equation*}
    \sup_{\epsilon \in (0, \infty)}
    \expect\Big[
    \exp\Big( 2 \int_0^t \int_0^t p_{\epsilon}(X^{t, 1}_s - X^{t, 2}_r) dr ds \Big)
    \Big] < \infty
  \end{equation*}
  for $t \in (0, (2 \kappa)^{-1})$.
  Therefore,
  if $t \in (0, (2 \kappa)^{-1})$,
  \begin{equation*}
    \sup_{\epsilon \in (0, \infty)} \wnexpect[ \tilde{u}_{\infty, \epsilon}^{\delta_0}(t, 0)^2] < \infty
  \end{equation*}
  and the first claim follows by Fatou's lemma.

  We move to prove the second claim. The uniform integrability yields
  \begin{equation*}
    \wnexpect[u(t,0)]
    = \lim_{\epsilon \to 0+}
    \frac{1}{2\pi t} \wnexpect \times \expect \Big[ \exp\Big( \int_0^t (\xi_{\epsilon}(X_s^t) - c_{\epsilon}) ds \Big)
    \Big].
  \end{equation*}
  Since
  \begin{equation*}
    \wnexpect \Big[ \exp\Big(\int_0^t \xi_{\epsilon}(X^t_s) ds \Big) \Big]
    = e^{\chi^t_{\epsilon}([0,t]_{\leq}^2)},
  \end{equation*}
  we have
  \begin{equation*}
    \begin{aligned}
      \MoveEqLeft[3]
      \wnexpect[u(t,0)] \\
      &=\lim_{\epsilon \to 0+} \frac{1}{2\pi t} \expect[ e^{\chi^t_{\epsilon}([0,t]_{\leq}^2) - c_{\epsilon}t} ]\\
      &=\lim_{\epsilon \to 0+} \frac{1}{2 \pi t} e^{t(\expect[\chi^1_{t^{-1}\epsilon}([0,1]_{\leq}^2)] - c_{\epsilon})}
      \expect\big[ \exp\big\{t (\chi_{t^{-1}\epsilon}^1([0,1]_{\leq}^2) - \expect[\chi_{t^{-1}\epsilon}^1([0,1]_{\leq}^2)]) \big\} \big],
    \end{aligned}
  \end{equation*}
  where we applied the scaling property of $X^t$.
  Thus, the second claim follows from Lemma \ref{lem:renormalization_constant_of_SILT_of_BB} and
  Theorem \ref{thm:construction_of_SILT_of_BB}-(ii).
\end{proof}
\begin{lemma}\label{lem:laplace_trasform_converges}
  If $t \in (0, (2\kappa)^{-1})$, then $\wnP$-almost surely,
  \begin{equation*}
    \lim_{L \to \infty} \int_{\R} e^{-t \lambda} N_L(d\lambda) = \wnexpect[ u(t, 0) ].
  \end{equation*}
\end{lemma}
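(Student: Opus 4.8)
\emph{Proof proposal.}
The plan is to run the path‑integral argument of \cite{nakao_spectral_1977} in the present setting, with the Feynman–Kac representation of the heat kernel replaced by its identification with the PAM solution started from a Dirac mass, and with the Brownian‑bridge self‑intersection estimates of Section~\ref{sec:brownian_bridge} serving as the technical input to the ergodic theorems. First I would rewrite the Laplace transform: by \eqref{eq:trace_of_exp_of_AH} applied with $\bm\theta=\bm\xi_L^{\n}$, and since $u_L^{\delta_x,\bm\xi_L^{\n}}=u_L^{x,0}$,
\begin{equation*}
  \int_{\R}e^{-t\lambda}N_L(d\lambda)=\frac{1}{L^2}\sum_{n=1}^{\infty}e^{-t\lambda_{n,L}}=\frac{1}{L^2}\int_{Q_L}u_L^{x,0}(t,x)\,dx .
\end{equation*}
By Lemma~\ref{lem:shift_of_u_infty} we have $u_\infty^{x,0}(t,x)=u_\infty^{0,x}(t,0)$, and since $T_x\bm\xi_L^{\n}$ is built from the shifted white noise $T_x\xi$, the field $x\mapsto u_\infty^{0,x}(t,0)$ is the orbit of $u(t,0)$ under the $\R^2$‑action of spatial shifts on the white noise, which is ergodic (indeed mixing). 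By Lemma~\ref{lem:second_moment_of_pam}, for $t\in(0,(2\kappa)^{-1})$ this field lies in $L^2(\wnP)\subseteq L^1(\wnP)$. The idea is then to sandwich $L^{-2}\int_{Q_L}u_L^{x,0}(t,x)\,dx$ between quantities governed by ergodic theorems.

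For the upper bound, the monotonicity of $u_L^{x,y}$ in $L$ noted after \eqref{eq:feynman-kac} gives $u_L^{x,0}(t,x)\le u_\infty^{x,0}(t,x)=u_\infty^{0,x}(t,0)$, so Wiener's multiparameter ergodic theorem yields
\begin{equation*}
  \limsup_{L\to\infty}\int_{\R}e^{-t\lambda}N_L(d\lambda)\le\lim_{L\to\infty}\frac{1}{L^2}\int_{Q_L}u_\infty^{0,x}(t,0)\,dx=\wnexpect[u(t,0)]\qquad\wnP\text{-a.s.}
\end{equation*}

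For the lower bound, fix $\ell\in\mathbb{I}$. For $L$ large, tile $Q_L$ by $m_L\defby\lfloor L/\ell\rfloor^2$ pairwise disjoint translates $z_i+Q_\ell\subseteq Q_L$ whose centres $z_i$ form a square grid of spacing $\ell$. For $x\in z_i+Q_\ell$, the Feynman–Kac formula \eqref{eq:feynman-kac} in Brownian‑bridge form (as in the proof of Lemma~\ref{lem:shift_of_u_infty}), after restricting the bridge to stay inside $z_i+Q_\ell$, gives $u_L^{x,0}(t,x)\ge u_\ell^{x-z_i,z_i}(t,x-z_i)$; integrating, $\int_{Q_L}u_L^{x,0}(t,x)\,dx\ge\sum_{i=1}^{m_L}F_\ell(z_i)$ with $F_\ell(z)\defby\int_{Q_\ell}u_\ell^{v,z}(t,v)\,dv=F_\ell(0)\circ T_z$, and $\wnexpect[F_\ell(0)]\le\ell^2\wnexpect[u(t,0)]<\infty$. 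The pointwise ergodic theorem for the ergodic $\mathbb{Z}^2$‑action $n\mapsto T_{\ell n}$ gives $m_L^{-1}\sum_iF_\ell(z_i)\to\wnexpect[F_\ell(0)]$ a.s., and with $m_L\ell^2/L^2\to1$ this yields $\liminf_{L\to\infty}\int_{\R}e^{-t\lambda}N_L(d\lambda)\ge\ell^{-2}\wnexpect[F_\ell(0)]$ a.s. Letting $\ell\to\infty$ along $\mathbb{I}$, and using that $\wnexpect[u_\infty^{v,0}(t,v)]=\wnexpect[u(t,0)]$ is independent of $v$,
\begin{equation*}
  \frac{1}{\ell^2}\wnexpect[F_\ell(0)]=\wnexpect[u(t,0)]-\frac{1}{\ell^2}\int_{Q_\ell}\wnexpect\big[u_\infty^{v,0}(t,v)-u_\ell^{v,0}(t,v)\big]\,dv .
\end{equation*}
By the Feynman–Kac formula the integrand equals $\lim_{\epsilon\to0}\frac{1}{2\pi t}\expect\big[e^{\chi^t_\epsilon([0,t]_{\leq}^2)-c_\epsilon t}\indic_{\{v+X^t_{[0,t]}\not\subseteq Q_\ell\}}\big]$ (taking $\wnexpect$ inside and using $\wnexpect[e^{\int_0^t\xi_\epsilon(v+X^t_s)\,ds}]=e^{\chi^t_\epsilon([0,t]_{\leq}^2)}$, the $\epsilon$‑limit being legitimized by the $L^2$‑bound of Lemma~\ref{lem:second_moment_of_pam}). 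Bounding the indicator by $\indic_{\{\sup_s\|X^t_s\|_\infty\ge d(v)\}}$ with $d(v)\defby\mathrm{dist}(v,\partial Q_\ell)$, Cauchy–Schwarz together with the uniform‑in‑$\epsilon$ exponential moment bound from the proof of Lemma~\ref{lem:second_moment_of_pam} (this is where $2t<\kappa^{-1}$ is used, via Lemma~\ref{lem:renormalization_constant_of_SILT_of_BB} and Theorem~\ref{thm:construction_of_SILT_of_BB}(iii)) and the Gaussian bridge tail $\P(\sup_s\|X^t_s\|_\infty\ge r)\le Ce^{-cr^2/t}$ give $\wnexpect[u_\infty^{v,0}(t,v)-u_\ell^{v,0}(t,v)]\le C_te^{-cd(v)^2/t}$. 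Since $\int_{Q_\ell}e^{-cd(v)^2/t}\,dv=O(\ell)$, the correction term is $O(\ell^{-1})$, so $\ell^{-2}\wnexpect[F_\ell(0)]\to\wnexpect[u(t,0)]$ and hence $\liminf_{L\to\infty}\int_{\R}e^{-t\lambda}N_L(d\lambda)\ge\wnexpect[u(t,0)]$ a.s. Combined with the upper bound, this proves the lemma.

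The main obstacle is the lower bound, that is, controlling the loss caused by the Dirichlet boundary condition: the tiling only produces the value $\ell^{-2}\wnexpect[F_\ell(0)]$, and the real work is the quantitative boundary‑layer estimate showing $\ell^{-2}\wnexpect[F_\ell(0)]\to\wnexpect[u(t,0)]$, which rests on the uniform‑in‑$\epsilon$ control of exponential moments of the bridge self‑intersection local time and is precisely where the restriction $t<(2\kappa)^{-1}$ enters. A minor additional point is the joint measurability (in fact continuity in $x$, from the continuity of $x\mapsto T_x\bm\xi_L^{\n}$ and of the solution map) of $x\mapsto u_L^{x,0}(t,x)$, which is needed to apply the ergodic theorems to the spatial integrals.
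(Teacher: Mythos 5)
Your proposal is correct, and its skeleton coincides with the paper's: rewrite the Laplace transform via \eqref{eq:trace_of_exp_of_AH} as $L^{-2}\int_{Q_L}u_L^{x,0}(t,x)\,dx$, use Lemma~\ref{lem:shift_of_u_infty} to recognize $u_\infty^{0,x}(t,0)$ as the shift orbit of $u(t,0)$, apply the multiparameter ergodic theorem, and then control the loss coming from the Dirichlet boundary. Where you diverge is in that last step. The paper writes the error additively as $L^{-2}\int_{Q_L}\{u_\infty^{x,0}(t,x)-u_L^{x,0}(t,x)\}\,dx$ and treats it softly: on the annulus $Q_L\setminus Q_{(1-\delta)L}$ it uses Cauchy--Schwarz together with the ergodic theorem applied to $u_\infty^{0,x}(t,0)^2$ (this is where $\wnexpect[u(t,0)^2]<\infty$, i.e.\ $t<(2\kappa)^{-1}$, enters), while on $Q_{(1-\delta)L}$ it uses the monotone comparison $u_L^{x,0}(t,x)\ge u_{\delta L_0}^{0,x}(t,0)$ for fixed $L_0\le L$, a second application of the ergodic theorem, and monotone convergence as $L_0\to\infty$ — no quantitative boundary estimate is ever needed. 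You instead run a liminf/limsup sandwich, with the lower bound obtained by tiling $Q_L$ into $\ell$-boxes, a $\Z^2$-ergodic theorem, and a genuinely quantitative boundary-layer bound $\wnexpect[u_\infty^{v,0}(t,v)-u_\ell^{v,0}(t,v)]\lesssim_t e^{-c\,d(v,\partial Q_\ell)^2/t}$ proved by Feynman--Kac, Cauchy--Schwarz against the uniform-in-$\epsilon$ exponential moment of $\chi_\epsilon^t$ (the same place $t<(2\kappa)^{-1}$ is used), and the Gaussian tail of the bridge. Your route costs an extra computation and some care with interchanging the $\epsilon$-limit and $\wnexpect$ (uniform integrability from the $L^2(\wnP)$ bound, or Fatou in the favorable direction), but it buys a rate: it shows the finite-volume Laplace transform $\ell^{-2}\int_{Q_\ell}\wnexpect[u_\ell^{v,0}(t,v)]\,dv$ converges to $\wnexpect[u(t,0)]$ at speed $O(\ell^{-1})$, which the paper's qualitative argument does not give. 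Your closing remarks on joint measurability of $x\mapsto u_L^{x,0}(t,x)$ and on the boundary term being the real difficulty are both accurate; the paper glosses over the former as well.
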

\begin{proof}
  We have by \eqref{eq:trace_of_exp_of_AH}
  \begin{align*}
    \int_{\R} e^{-t \lambda} N_L(d\lambda) &= \frac{1}{L^2} \sum_{n=1}^{\infty} e^{-t \lambda_{n, L}}
    = \frac{1}{L^2} \int_{Q_L} u_{L}^{x, 0}(t, x) dx \\
    &= \frac{1}{L^2} \int_{Q_L} u_{\infty}^{x, 0}(t, x) dx - \frac{1}{L^2} \int_{Q_L} \{
    u_{\infty}^{x, 0}(t, x) - u_L^{x, 0}(t, x) \} dx.
  \end{align*}
  By Lemma \ref{lem:shift_of_u_infty}, we have $u_{\infty}^{x, 0}(t, x) = u_{\infty}^{0, x}(t, 0)$.
  Therefore, Proposition \ref{prop:ergodicitiy_of_white_noise} and the ergodic theorem \cite[Theorem 10.14]{Kallenberg2002} yield
  \begin{equation*}
    \lim_{L \to \infty} \frac{1}{L^2} \int_{Q_L} u_{\infty}^{0, x}(t, 0) dx
     = \wnexpect[u(t, 0)] \quad \wnP\mbox{-a.s.}
  \end{equation*}
  Therefore, it suffices to show
  \begin{equation}\label{eq:convergence_of_u_L_to_u_infty}
    \lim_{L \to \infty} \frac{1}{L^2} \int_{Q_L} \{
    u_{\infty}^{x, 0}(t, x) - u_L^{x, 0}(t, x) \} dx = 0 \quad \wnP\mbox{-a.s.}
  \end{equation}
  Take $\delta \in (0, 1) \cap \Q$. We have
  \begin{align*}
    \int_{Q_L \setminus Q_{(1-\delta)L}} \{
    u_{\infty}^{x, 0}(t, x) - u_L^{x, 0}(t, x) \} dx
    &\leq \int_{Q_L \setminus Q_{(1-\delta)L}} u_{\infty}^{0, x}(t, 0) dx \\
    &\leq \sqrt{2\delta} L \Big( \int_{Q_L} u_{\infty}^{0, x}(t, 0)^2 dx \Big)^{\frac{1}{2}}.
  \end{align*}
  Thus, by the ergodic theorem,
  \begin{equation*}
    \limsup_{L \to \infty} \frac{1}{L^2} \int_{Q_L \setminus Q_{(1-\delta)L}} \{
    u_{\infty}^{x, 0}(t, x) - u_L^{x, 0}(t, x) \} dx
    \leq \sqrt{2 \delta} \wnexpect[u(t,0)^2]^{\frac{1}{2}} \quad \wnP\mbox{-a.s.}
  \end{equation*}
  We move to estimate $\frac{1}{L^2} \int_{Q_{(1-\delta)L}} \{
  u_{\infty}^{x, 0}(t, x) - u_L^{x, 0}(t, x) \} dx$.
  If $x \in Q_{(1-\delta)L}$, we have $-x + Q_L \supseteq Q_{\delta L}$ and hence the Feynman-Kac formula implies
  $u_L^{x, 0}(t, x) \geq u_{\delta L}^{0, x}(t, 0)$.
  If $L_0 \leq L$, then
  \begin{equation*}
    \frac{1}{L^2} \int_{Q_{(1-\delta)L}} \{
    u_{\infty}^{x, 0}(t, x) - u_L^{x, 0}(t, x) \} dx
    \leq \frac{1}{L^2} \int_{Q_{(1-\delta)L}} \{
    u_{\infty}^{0, x}(t, 0) - u_{\delta L_0}^{0, x}(t, 0) \} dx.
  \end{equation*}
  By the ergodic theorem again, we obtain, $\wnP$-a.s.,
  \begin{equation*}
    \limsup_{L \to \infty} \frac{1}{L^2} \int_{Q_{(1-\delta)L}} \{
    u_{\infty}^{x, 0}(t, x) - u_L^{x, 0}(t, x) \} dx
    \leq \wnexpect[u_{\infty}^{0,0}(t,0) - u_{\delta L_0}^{0,0}(t, 0)].
  \end{equation*}
  By the monotone convergence theorem, $\lim_{L_0 \to \infty} \wnexpect[u_{\delta L_0}^{0,0}(t, 0)]
  = \wnexpect[u_{\infty}^{0,0}(t, 0)]$.
  Therefore, by letting $L_0 \to \infty$ and then $\delta \to 0+$, we complete the proof of \eqref{eq:convergence_of_u_L_to_u_infty}
  and hence the proof of Lemma \ref{lem:laplace_trasform_converges}.
\end{proof}
\begin{proof}[Proof of Theorem \ref{thm:convergence_to_IDS}]
  By Lemma \ref{lem:laplace_trasform_converges}, if we set $T \defby (2\kappa)^{-1}$, there exists
  a $\wnP$-full subset $\S'_1$ of $\S'_0$
  such that for every $t \in (0, T) \cap \Q$ we have
  \begin{equation*}
    \lim_{L \to \infty} \int_{\R} e^{-t\lambda} N_L(d\lambda) = \wnexpect[ u(t, 0) ] \quad \mbox{ on } \S'_1.
  \end{equation*}
  In the rest of the proof, we restrict the probability space for $\wnP$ to $\S'_1$.
  We fix $\tau \in (0, T) \cap \Q$ and $\delta \in (0, \min\{\tau, T- \tau\})$ and define the probability measure $\nu_L$ by
  \begin{equation*}
    \nu_L(d\lambda) \defby \Big(\int_{\R} e^{-\tau \mu} N_L(d\mu) \Big)^{-1} e^{-\tau \lambda} N_L(d\lambda).
  \end{equation*}
  By Lemma \ref{lem:laplace_transform_of_probability_measures}, $\nu_L$ converges weakly to some limit $\nu$ and if we define $N$ by
  \begin{equation*}
    N(d\lambda) \defby \Big\{\lim_{L \to \infty} \int_{\R} e^{-\tau \mu} N_L(d\mu)\Big\} e^{\tau \lambda} \nu(d\lambda),
  \end{equation*}
  then $N_L$ converges vaguely to $N$ and
  \begin{equation*}
    \int_{\R} e^{-t \lambda} N(d\lambda) =
    \frac{t^{\frac{t}{2\pi} - 1}}{2\pi} \expect[e^{t \zeta}]
  \end{equation*}
  for $t \in (\tau - \delta, \tau + \delta)$.
  But by Lemma \ref{lem:analytic_extension_of_Laplace_transform}, this identity continues to hold for $t \in (0, \kappa^{-1})$.
  To prove the last claim \eqref{eq:divergence_of_exponential_moment_of_N}, suppose $\int_{\R} e^{-t\lambda} N(d\lambda) < \infty$
  for some $t > \kappa^{-1}$. Then, Lemma \ref{lem:analytic_extension_of_Laplace_transform} implies
  $\expect[e^{t \zeta}] < \infty$, which contradicts Theorem \ref{thm:right_tail_of_SILT_of_BB}.
\end{proof}

We next study the Lifshitz tails of the IDS $N$ constructed in Theorem \ref{thm:convergence_to_IDS}.
The goal is to prove the following theorem:
\begin{theorem}\label{thm:lifshitz_tails_white_noise}
  Let $N$ be the IDS of the Anderson Hamiltonian with two-dimensional white noise as constructed in Theorem \ref{thm:convergence_to_IDS}.
  Then, the equation \eqref{eq:lifshitz_tails_white_noise} holds.
\end{theorem}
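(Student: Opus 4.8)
The plan is to prove the two matching one-sided estimates whose conjunction is \eqref{eq:lifshitz_tails_white_noise}. For the upper bound $\limsup_{\lambda\to-\infty}\frac{1}{-\lambda}\log N(\lambda)\le-\kappa^{-1}$ I would use only the Laplace transform identity \eqref{eq:laplace_transform_of_N} from Theorem~\ref{thm:convergence_to_IDS}. For each $t\in(0,\kappa^{-1})$ the constant $C_t\defby\int_{\R}e^{-t\mu}\,N(d\mu)=\frac{t^{t/(2\pi)-1}}{2\pi}\expect[e^{t\zeta}]$ is finite, because $\expect[e^{t\zeta}]<\infty$ on $(0,\kappa^{-1})$ by Theorem~\ref{thm:construction_of_SILT_of_BB}-(iii). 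Since $e^{-t\mu}\ge e^{-t\lambda}$ whenever $\mu\le\lambda<0$, Markov's inequality yields $N(\lambda)=N((-\infty,\lambda])\le C_t\,e^{t\lambda}$, hence $\frac{1}{-\lambda}\log N(\lambda)\le -t+\frac{\log C_t}{-\lambda}$; letting $\lambda\to-\infty$ and then $t\uparrow\kappa^{-1}$ gives this half.

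The lower bound $\liminf_{\lambda\to-\infty}\frac{1}{-\lambda}\log N(\lambda)\ge-\kappa^{-1}$, i.e.\ \eqref{eq:liminf_lifshitz_tails_white_noise}, is where the real work lies, and I would obtain it by Dirichlet bracketing together with the first-eigenvalue tail estimate of \cite{chouk2020asymptotics}. Fix $\ell\in\mathbb{I}$; for $K\in\N$ we have $K\ell\in\mathbb{I}$, and we tile $Q_{K\ell}$ into $K^2$ translated copies of $Q_\ell$. Imposing additional Dirichlet conditions on the internal interfaces only raises eigenvalues (min--max principle), so $N_{K\ell}(\lambda)\ge(K\ell)^{-2}\sum_{j=1}^{K^2}\indic_{\{\lambda_{1,\ell}^{(j)}\le\lambda\}}$, where $\lambda_{1,\ell}^{(j)}$ is the bottom eigenvalue of the Anderson Hamiltonian on the $j$-th sub-box. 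The family $(\lambda_{1,\ell}^{(j)})_j$ is stationary and ergodic under the $\ell\mathbb{Z}^2$-shift by ergodicity of the white noise (Proposition~\ref{prop:ergodicitiy_of_white_noise}), so the spatial ergodic theorem gives $K^{-2}\sum_j\indic_{\{\lambda_{1,\ell}^{(j)}\le\lambda\}}\to\wnP(\lambda_{1,\ell}\le\lambda)$ $\wnP$-a.s.\ as $K\to\infty$. On the other hand, the vague convergence $N_L\to N$ of Theorem~\ref{thm:convergence_to_IDS}, combined with the uniform tail bound $\sup_L N_L((-\infty,-M])\le e^{-tM}\sup_L\int e^{-t\mu}\,N_L(d\mu)$, whose right side tends to $0$ as $M\to\infty$ since $\int e^{-t\mu}N_L(d\mu)\to\wnexpect[u(t,0)]<\infty$ for $t\in(0,(2\kappa)^{-1})$, yields $\limsup_L N_L(\lambda)\le N(\lambda)$ for every $\lambda$ via the portmanteau theorem for Radon measures. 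Restricting to $L=K\ell$ and combining, I arrive at
\[
  N(\lambda)\ \ge\ \ell^{-2}\,\wnP(\lambda_{1,\ell}\le\lambda)\qquad\text{for all }\ell\in\mathbb{I}\text{ and all }\lambda .
\]
It then remains to insert the tail estimate of $\lambda_{1,\ell}$ from \cite{chouk2020asymptotics}: taking logarithms, $\frac{1}{-\lambda}\log N(\lambda)\ge\frac{1}{-\lambda}\bigl(-2\log\ell+\log\wnP(\lambda_{1,\ell}\le\lambda)\bigr)$, and the idea is to let the side length $\ell=\ell(\lambda)$ grow with $-\lambda$ at the critical rate $\log\ell\sim\frac{-\lambda}{2\kappa}$ — the scale at which the typical ground-state energy on $Q_\ell$, of order $-2\kappa\log\ell$ by \cite{chouk2020asymptotics}, reaches the level $\lambda$, which is where the Gagliardo--Nirenberg constant $\kappa$ enters. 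For such $\ell$ the probability $\wnP(\lambda_{1,\ell}\le\lambda)$ stays bounded below (or at worst has logarithm $o(\lambda)$), so $N(\lambda)\gtrsim\ell^{-2}=e^{-2\log\ell}\approx e^{\lambda/\kappa}$, giving $\frac{1}{-\lambda}\log N(\lambda)\ge-\kappa^{-1}-\varepsilon$ for every $\varepsilon>0$ once $-\lambda$ is large enough.

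I expect the main obstacle to lie in this last step, on two counts. First, extracting the clean inequality $N(\lambda)\ge\ell^{-2}\wnP(\lambda_{1,\ell}\le\lambda)$ honestly from the merely \emph{vague} (not weak) convergence of the infinite measures $N_L$ forces a uniform control of the mass near $-\infty$ and a careful portmanteau argument, and also requires that Dirichlet bracketing be justified for the singular, renormalized operators (here the box-independence of $c_\epsilon$ should help, but the Neumann-extension structure of the enhanced noise on the sub-boxes needs checking). Second, and more essentially, one must calibrate the growth of $\ell(\lambda)$ against the quantitative tail bound of \cite{chouk2020asymptotics} so that the optimization over $\ell$ lands precisely on $-\kappa^{-1}$ rather than a constant multiple of it; the precise form of the eigenvalue asymptotics in \cite{chouk2020asymptotics} is the genuine external input of the argument.
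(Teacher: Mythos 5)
Your proposal follows the same two-step strategy as the paper (Chebyshev on the Laplace transform \eqref{eq:laplace_transform_of_N} for the upper bound; Dirichlet bracketing plus the first-eigenvalue tail estimate of \cite{chouk2020asymptotics} for the lower bound), but the execution of the lower bound differs in two respects worth noting. First, the calibration $\ell=\ell(\lambda)$ that you identify as the essential difficulty is not needed: the quantitative tail bound of \cite{chouk2020asymptotics}, recorded in Lemma~\ref{lem:eigenvalue_tail_estimate}, already gives on a \emph{fixed} box $\wnP(\lambda_{1,L}\le\lambda)\ge 1-e^{c\lambda e^{\mu\lambda}}\ge \frac{c(-\lambda)}{2}e^{\mu\lambda}$ for any $\mu>\kappa^{-1}$ and $-\lambda$ large, i.e.\ the probability of an atypically low ground state on $Q_1$ decays at exactly the rate $e^{\mu\lambda}$ with $\mu\downarrow\kappa^{-1}$. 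The paper therefore simply takes $L=1$ in $N(\lambda)\ge\wnexpect[N_L(\lambda)]\ge L^{-2}\wnP(\lambda_{1,L}\le\lambda)$ and is done; the sharp constant comes from the $\lambda$-decay rate of the fixed-box tail, not from a volume optimization. Your growing-box version also lands on $-\kappa^{-1}$ (the $e^{2\log L}$ volume factor in the tail bound couples $\ell$ and $\lambda$ correctly), but it requires the thresholds in the estimate of \cite{chouk2020asymptotics} to be uniform in $\ell$, an extra check the paper avoids. Second, the bracketing inequality: your route (a.s.\ superadditivity on sub-boxes, spatial ergodic theorem in $K$, then a portmanteau argument with the uniform exponential tail control to upgrade vague convergence) is sound, but the issue you correctly flag — that Dirichlet bracketing must be justified for the renormalized operators, whose enhanced noise on a sub-box is built from a Neumann extension relative to that sub-box and is not the restriction of the big-box enhancement — is resolved in the paper by working entirely at the level of the smoothed potentials $\xi_\epsilon-c_\epsilon$, where the classical inequality $N_{\infty,\epsilon}(\mu)\ge\wnexpect[N_{L,\epsilon}(\mu)]$ of \cite[Section VI.1.3]{carmona1990spectral} applies, and then passing to $\epsilon\to0$ using continuity of the eigenvalues in the enhanced noise, vague convergence of $N_{\infty,\epsilon}$ to $N$, and Fatou (Lemma~\ref{lem:Dirichlet_comparisons}). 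So both obstacles you anticipate are genuine, but both admit simpler resolutions than the ones you sketch.
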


From Theorem \ref{thm:convergence_to_IDS}, we easily deduce
\begin{equation}\label{eq:limsup_lifshitz_tails_white_noise}
  A \defby \limsup_{\lambda \to -\infty} \frac{1}{-\lambda} \log N(\lambda) = - \kappa^{-1}.
\end{equation}
In fact, $A \leq -\kappa^{-1}$ follows from Chebyshev's inequality. To show the opposite inequality, suppose $A < -\kappa^{-1}$. Then,
there exist $\epsilon \in (0, 1)$ and $C \in (0, \infty)$ such that
$N(\lambda) \leq C e^{(1+ \epsilon){\kappa}^{-1} \lambda}$ for $\lambda \in (-\infty, 0)$.
This implies $\int_{\R} e^{-t\lambda} N(d\lambda) < \infty$ for $t \in (\kappa^{-1}, (1+ \epsilon){\kappa}^{-1})$,
which contradicts Theorem \ref{thm:convergence_to_IDS}.

To the best of the author's knowledge, the proof of the Lifshitz tails of the form
\begin{equation*}
  \lim_{\lambda \to \lambda_0} (-\lambda)^{a} \log N^V(\lambda) = b,
\end{equation*}
where $\lambda_0$ is the bottom of the spectrum,
usually relies on the asymptotics of the Laplace transform
\begin{equation*}
  \lim_{t \to \infty} t^{\tilde{a}} \log \int_{\R} e^{-t\lambda} N(d\lambda) = \tilde{b}.
\end{equation*}
Indeed, the approach to study the asymptotics of the Laplace transform was first implemented in the case where
the random potential $V$ is of the form
\begin{equation*}
  V(x) = \int_{\R^d} \phi(x-y) \mu(dy),
\end{equation*}
where $\mu$ is the Poisson random measure with respect to the Lebesgue measure and $\phi$ is a nonnegative continuous function
which is not identically zero and satisfies $\phi(x) = o(\abs{x}^{-d-2})$ as $x \to \infty$.
The work \cite{donsker_asymptotics_1975} (see \cite{nakao_spectral_1977} as well) studied the asymptotics
of the Laplace transform of the IDS $N^V$ and consequently proved
\begin{equation*}
  \lim_{\lambda \to 0+} \lambda^{\frac{d}{2}} \log N^V(\lambda) = - \gamma_d^{\frac{d}{2}},
\end{equation*}
where $\gamma_d$ is the smallest eigenvalue of $-\frac{1}{2} \Delta$ on the unit ball in $\R^d$ with Dirichlet boundary conditions.
This approach works for stationary, ergodic centered Gaussian potentials $V$ which are not too singular.
See Appendix \ref{sec:lifshiz_tail_riesz} for more detail.

However, this approach does not work in the case of the two-dimensional white noise, since the Laplace transform blows up.
We remark that the work \cite{benaim_smile_2008} studied tail behaviors of distribution functions
which have finite exponential moment only for small parameter $t > 0$.
In order to apply the result from \cite{benaim_smile_2008} to our problem, we need to understand the behavior of
\begin{equation*}
  \int_{\R} e^{-t\lambda} N(d\lambda) = \frac{t^{\frac{t}{2\pi} - 1}}{2\pi} \expect[e^{t\zeta}]
\end{equation*}
as $t \to \kappa^{-1}-$. This seems very difficult, and therefore we take a different approach.

In view of \eqref{eq:limsup_lifshitz_tails_white_noise}, the proof of Theorem \ref{thm:lifshitz_tails_white_noise} comes down to
showing \eqref{eq:liminf_lifshitz_tails_white_noise}.
To obtain a lower bound of $N(\lambda)$, we use the inequality $N(\lambda) \geq \wnexpect[N_1(\lambda)]$,
which will be proved in Lemma \ref{lem:Dirichlet_comparisons}. This inequality is obtained by a well-known technique to use superadditivity of the Dirichlet eigenvalue counting functions
(\cite{kirsch_large_1983}, \cite{simon_lifschitz_1985}, \cite[Section 3]{ids2007}).
Then, we prove \eqref{eq:liminf_lifshitz_tails_white_noise}
by using the tail estimate of
the first eigenvalue of the Anderson Hamiltonian obtained in \cite{chouk2020asymptotics}.
\begin{lemma}\label{lem:Dirichlet_comparisons}
  For every $\lambda \in \R$ and $L \in (0, \infty)$, we have
  $N(\lambda) \geq \wnexpect[ N_L(\lambda)]$.
\end{lemma}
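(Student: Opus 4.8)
\emph{Plan.} This is the classical superadditivity (Dirichlet bracketing) argument for eigenvalue counting functions (\cite{kirsch_large_1983}, \cite{simon_lifschitz_1985}, \cite[Section 3]{ids2007}), adapted to the present setting by (a) using the stationarity of the white noise and the translation‑covariance of the enhanced potential under $T_x$, and (b) performing the $L\to\infty$ limit by means of the second–moment bound of Lemma \ref{lem:second_moment_of_pam}. Fix $\lambda\in\R$. Since $N(d\lambda)$ is a locally finite measure it has at most countably many atoms, so I first prove the inequality when $\lambda$ is a continuity point of $N$ and remove this restriction at the end.

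\emph{Step 1 (superadditivity and stationarity).} For $L'\in\mathbb{I}$ large put $M\defby\lfloor L'/L\rfloor$ and place $M^2$ pairwise disjoint translates $Q_L+z_1,\dots,Q_L+z_{M^2}$ of $Q_L$ inside $Q_{L'}$. Since enlarging the form domain from $\bigoplus_j H_0^1(Q_L+z_j)$ to $H_0^1(Q_{L'})$ can only lower the eigenvalues, the min–max principle gives the superadditivity of the Dirichlet counting functions,
\begin{equation*}
  \textstyle\sum_{n}\indic_{\{\lambda_{n,L'}\leq\lambda\}}\ \geq\ \sum_{j=1}^{M^2}\sum_{n}\indic_{\{\lambda_{n,Q_L+z_j}\leq\lambda\}},
\end{equation*}
where $\lambda_{n,Q_L+z_j}$ are the eigenvalues of the Anderson Hamiltonian on $Q_L+z_j$ with Dirichlet boundary conditions. (It is convenient to check this at the regularized level, where the potential $\xi_\epsilon-c_\epsilon$ does not depend on the domain, and then let $\epsilon\to0$ using the convergence of the renormalized operators from \cite{chouk2020asymptotics}; the sum over $n$ is controlled by the uniform‑in‑$\epsilon$ bounds of Theorem \ref{thm:construction_of_SILT_of_BB}.) By the stationarity of $\xi$ and the covariance of $\bm{\xi}_L^{\n}$ under $T_x$ (Definition \ref{def:enhanced_white_noise}), each summand on the right has the same law as $\sum_n\indic_{\{\lambda_{n,L}\leq\lambda\}}=L^2N_L(\lambda)$, so dividing by $(L')^2$ and taking $\wnexpect$,
\begin{equation*}
  \wnexpect[N_{L'}(\lambda)]\ \geq\ \frac{M^2L^2}{(L')^2}\,\wnexpect[N_L(\lambda)],\qquad\text{and}\qquad \frac{M^2L^2}{(L')^2}\longrightarrow1\ \text{ as }L'\to\infty.
\end{equation*}

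\emph{Step 2 (passage to the limit).} Fix $t\in(0,(2\kappa)^{-1})$. By \eqref{eq:trace_of_exp_of_AH}, Definition \ref{def:def_of_u} and Lemma \ref{lem:shift_of_u_infty},
\begin{equation*}
  \int_{\R}e^{-t\mu}N_{L'}(d\mu)=\frac{1}{(L')^2}\int_{Q_{L'}}u_{L'}^{x,0}(t,x)\,dx\ \leq\ \frac{1}{(L')^2}\int_{Q_{L'}}u_{\infty}^{0,x}(t,0)\,dx,
\end{equation*}
and by Jensen's inequality, the stationarity $u_{\infty}^{0,x}(t,0)\dequal u(t,0)$ and Lemma \ref{lem:second_moment_of_pam},
\begin{equation*}
  \sup_{L'}\wnexpect\Big[\Big(\int_{\R}e^{-t\mu}N_{L'}(d\mu)\Big)^2\Big]\ \leq\ \wnexpect[u(t,0)^2]<\infty.
\end{equation*}
Hence $\{N_{L'}(\lambda)\}_{L'}$ is bounded in $L^2(\wnP)$, so uniformly integrable. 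Moreover, by Lemma \ref{lem:laplace_trasform_converges}, $\wnP$‑a.s.\ one has $\int_{\R}e^{-t\mu}N_{L'}(d\mu)\to\wnexpect[u(t,0)]$, and since $N_{L'}(\mu)\leq e^{t\mu}\int_{\R}e^{-t\nu}N_{L'}(d\nu)$ this gives $\limsup_{L'\to\infty}N_{L'}(\mu)\leq e^{t\mu}\wnexpect[u(t,0)]\to0$ as $\mu\to-\infty$, i.e.\ no mass of $N_{L'}(d\mu)$ escapes to $-\infty$. Combining this with the vague convergence $N_{L'}(d\mu)\to N(d\mu)$ (Theorem \ref{thm:convergence_to_IDS}) shows that at the continuity point $\lambda$ we have $N_{L'}(\lambda)\to N(\lambda)$ $\wnP$‑a.s., and uniform integrability upgrades this to $\wnexpect[N_{L'}(\lambda)]\to N(\lambda)$. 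Together with Step 1 this yields $N(\lambda)\geq\wnexpect[N_L(\lambda)]$ at every continuity point of $N$. Finally, for arbitrary $\lambda$ pick continuity points $\lambda_k\downarrow\lambda$; the left side tends to $N(\lambda)$ by right‑continuity of $N$, and the right side to $\wnexpect[N_L(\lambda)]$ by right‑continuity of $\mu\mapsto N_L(\mu)$ together with dominated convergence, the dominating variable $N_L(\lambda_1)$ being $\wnP$‑integrable because $\wnexpect[\int_{\R}e^{-t\mu}N_L(d\mu)]\leq\wnexpect[u(t,0)]<\infty$. This proves the lemma.

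\emph{Main obstacle.} The superadditivity itself is standard; the substantive point in the white‑noise setting is the limit passage in Step 2. Because $\int_{\R}e^{-t\mu}N(d\mu)$ is finite only for $t<\kappa^{-1}$, the uniform integrability of the counting functions and the absence of escape of mass to $-\infty$ are not automatic (as they are in the classical Gaussian or Poissonian situations), and they are supplied exactly by the second–moment estimate of Lemma \ref{lem:second_moment_of_pam}; this is why the range $t\in(0,(2\kappa)^{-1})$ intervenes. A secondary technical point is to ensure that restricting the (regularized) operator on $Q_{L'}$ to a sub‑box reproduces precisely the corresponding operator on that sub‑box, which is why it is cleanest to carry out the bracketing at the $\epsilon$‑level, where the potential $\xi_\epsilon-c_\epsilon$ is domain‑independent, before sending $\epsilon\to0$.
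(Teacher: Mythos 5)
Your proof is correct, but it is organized around a different limit than the paper's. The paper regularizes first: it introduces the counting functions $N_{L,\epsilon}$ of $\H_L^{\bm{\xi}_{L,\epsilon}^{\n}}$ and the IDS $N_{\infty,\epsilon}$ of the smooth-potential operator $-\frac{1}{2}\Delta-(\xi_{\epsilon}-c_{\epsilon})$ on all of $\R^2$, invokes the classical comparison $N_{\infty,\epsilon}(\mu)\geq\wnexpect[N_{L,\epsilon}(\mu)]$ for $\mu>\lambda$ directly from \cite[Section VI.1.3, (VI.17)]{carmona1990spectral}, and then only has to send $\epsilon\to0+$: Fatou on the right (using continuity of the eigenvalues in the enhanced noise) and, on the left, the identification of $\lim_{\epsilon}N_{\infty,\epsilon}$ with $N$ vaguely via the Laplace transforms $\frac{1}{2\pi t}\wnexpect[e^{\chi^t_{\epsilon}([0,t]^2_{\leq})-c_{\epsilon}t}]$ and the same device as in Theorem \ref{thm:convergence_to_IDS}; finally $\mu\downarrow\lambda$. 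You instead carry out the Dirichlet bracketing between finite boxes $Q_L\subseteq Q_{L'}$ and send $L'\to\infty$, which obliges you to upgrade the almost sure vague convergence $N_{L'}\to N$ to convergence of the expectations $\wnexpect[N_{L'}(\lambda)]\to N(\lambda)$ at continuity points. Your uniform-integrability bound $\sup_{L'}\wnexpect[(\int_{\R}e^{-t\mu}N_{L'}(d\mu))^2]\leq\wnexpect[u(t,0)^2]$ and the no-escape-of-mass estimate are exactly what is needed and are correct; this is where Lemma \ref{lem:second_moment_of_pam} (hence the restriction $t<(2\kappa)^{-1}$, harmless here) enters your argument, whereas the paper needs no quantitative control on $N_{L'}$ beyond Theorem \ref{thm:convergence_to_IDS}. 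What your route buys is that you never have to construct or identify the limit of $N_{\infty,\epsilon}$; what it costs is the extra tightness and uniform-integrability work. One bookkeeping point to tighten in your Step 1: when you let $\epsilon\to0$ in the bracketing inequality, the right-hand indicators only satisfy $\liminf_{\epsilon}\indic_{\{\lambda_n^{\epsilon}\leq\lambda\}}\geq\indic_{\{\lambda_n<\lambda\}}$, so you obtain $\wnexpect[N_L(\lambda-)]$ rather than $\wnexpect[N_L(\lambda)]$ on the right; this is absorbed either by your final approximation $\lambda_k\downarrow\lambda$ (since $N_L(\lambda_k-)\geq N_L(\lambda)$ for $\lambda_k>\lambda$) or by the paper's device of keeping $\mu>\lambda$ on the left throughout, but it should be stated.
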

\begin{proof}
  We set
  \begin{equation*}
  N_{L, \epsilon}(\lambda) \defby L^{-2} \sum_{n=1}^{\infty} \indic_{\{
  \lambda_{n, L}^{\bm{\xi}_{L, \epsilon}^{\n}} \leq \lambda \}}
  \end{equation*}
  and denote by $N_{\infty, \epsilon}$ the IDS of the
  Anderson Hamiltonian $\H_{\epsilon} = -\frac{1}{2} \Delta - (\xi_{\epsilon} - c_{\epsilon})$.
  By the continuity of the eigenvalues with respect to the enhanced noise
  \cite[Theorem 5.4]{chouk2020asymptotics},
  we have $\lim_{\epsilon \to 0+}\lambda_{n, L}^{\bm{\xi}_{L, \epsilon}^{\n}} = \lambda_{n, L}^{\bm{\xi}_{L}^{\n}}$ for every $n$ and hence
  $N_{L, \epsilon}(d\lambda)$ converges to $N_L(d\lambda)$ vaguely.
  Since
  \begin{equation*}
    \int_{\R} e^{-t\lambda} N_{\infty, \epsilon} (d\lambda) = \frac{1}{2\pi t}
    \wnexpect[ e^{\chi^t_{\epsilon}([0,t]_{\leq}^2) - c_{\epsilon}t} ],
  \end{equation*}
  Theorem \ref{thm:construction_of_SILT_of_BB}-(ii) implies
  $\lim_{\epsilon \to 0+} \int_{\R} e^{-t\lambda} N_{\infty, \epsilon}(d\lambda) = \int_{\R} e^{-t\lambda} N(d\lambda)$
  for $t \in (0, \kappa^{-1})$.
  Thus, by applying the same trick used in the proof of Theorem \ref{thm:convergence_to_IDS}
  to prove the vague convergence of $N_L$, we conclude $N_{\infty, \epsilon}(d\lambda)$ converges vaguely to $N(d\lambda)$.
  We remark that $N_{\infty, \epsilon}(d \lambda)\vert_{(-\infty, 0)}$ converges to
  $N_{\infty, \epsilon}(d \lambda)\vert_{(-\infty, 0)}$ weakly.

  Let $\mu > \lambda$. For each $\epsilon \in (0, \infty)$, we have
  $N_{\infty, \epsilon}(\mu) \geq \wnexpect[N_{L, \epsilon}(\mu)]$. See \cite[Section VI.1.3]{carmona1990spectral}, especially (VI.17).
  Therefore, by letting $\epsilon \to 0+$ and applying Fatou's lemma,
we obtain $N(\mu) \geq \wnexpect[N_L(\lambda)]$. By letting \change{$\mu \to \lambda+$},
we obtain $N(\lambda) \geq \wnexpect[N_L(\lambda)]$.
\end{proof}
\begin{lemma}\label{lem:eigenvalue_tail_estimate}
  Let $L \in (0, \infty)$ and $\mu > \kappa^{-1}$. Then, there exist $c \in (0, \infty)$ and
   $\Lambda \in (-\infty, 0)$ such that
  \begin{equation*}
    \wnP(\lambda_{1, L} \leq \lambda) \geq 1 - e^{ c \lambda e^{\mu \lambda}}, \quad \mbox{for } \lambda \leq \Lambda.
  \end{equation*}
\end{lemma}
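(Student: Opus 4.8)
The plan is to realise, for each fixed $L$, an explicit family of events of $\wnP$-probability at least of the asserted order on which the bottom eigenvalue $\lambda_{1,L}$ falls below $\lambda$; the probability estimate itself is then precisely the tail estimate for the first eigenvalue of $\H_L$ proved in \cite{chouk2020asymptotics}, and everything around it is bookkeeping. First I would fix a unit vector $\psi \in L^2(Q_L)$ and use the spectral theorem: $\langle e^{-t\H_L}\psi,\psi\rangle \le e^{-t\lambda_{1,L}}$ for every $t>0$, so the event $\{\lambda_{1,L}\le\lambda\}$ contains $\{\langle e^{-t\H_L}\psi,\psi\rangle \ge e^{-t\lambda}\}$ for every $t>0$. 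By the Feynman--Kac formula \eqref{eq:feynman-kac} (and $u_{L,\epsilon}^{x,y}\to u_L^{x,y}$), the quantity $\langle e^{-t\H_L}\psi,\psi\rangle$ is an integral against the Dirichlet heat kernel of $\H_L$, and it becomes large as soon as the renormalised white noise develops a sufficiently strong positive spike inside $Q_L$; it therefore suffices to bound from below the $\wnP$-probability that such a spike occurs.

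To see why that probability is of order $e^{\kappa^{-1}\lambda}$ up to polynomial corrections, I would reason heuristically. Forcing $\xi$ to resemble the potential $V_c := c\,\psi^2/\|\psi\|_{L^4}^2$, for $\psi$ a unit function, costs, at the level of Gaussian densities, $\exp(-\tfrac12\|V_c\|_{L^2}^2) = \exp(-c^2/2)$, while the ground state energy $\inf_{\|\varphi\|=1}\big(\tfrac12\|\nabla\varphi\|_{L^2}^2 - \int V_c\,\varphi^2\big)$ of $-\tfrac12\Delta - V_c$ is at most $\tfrac12\|\nabla\psi\|_{L^2}^2 - c\|\psi\|_{L^4}^2$ (take $\varphi=\psi$). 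Minimising first over the dilation scale of $\psi$ and then over the profile, Ladyzhenskaya's inequality \eqref{eq:GN_ineq} with its optimal constant $\kappa$ shows that energy $\le\lambda$ is attained with $c$ of order $\sqrt{2\kappa^{-1}(-\lambda)}$, hence at cost $e^{\kappa^{-1}\lambda}$ up to lower-order factors; moreover the optimising $\psi$ has spatial scale of order $(-\lambda)^{-1/2}$, which tends to $0$, so the construction fits inside $Q_L$ for every fixed $L$ once $-\lambda$ is large enough. This is exactly why $c$ and $\Lambda$ in the statement may depend on $L$ while the conclusion holds for all $L$. Turning this heuristic into a rigorous lower bound for the genuinely distribution-valued, renormalised potential is the content of the tail estimate in \cite{chouk2020asymptotics}; invoking it (after, if necessary, the elementary rescaling that trades the box size $L$ for the strength of the white noise on $Q_1$) yields, for every $\mu > \kappa^{-1}$, constants $c_0\in(0,\infty)$ and $\Lambda\in(-\infty,0)$ with $\wnP(\lambda_{1,L}\le\lambda) \ge c_0(-\lambda)e^{\mu\lambda}$ for $\lambda\le\Lambda$. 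Since $1 - e^{c_1\lambda e^{\mu\lambda}} = c_1(-\lambda)e^{\mu\lambda}(1+o(1))$ as $\lambda\to-\infty$, such a bound is equivalent, after renaming the constant, to the doubly-exponential form stated in the lemma.

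The single genuine obstacle is the step delegated to \cite{chouk2020asymptotics}: because $\xi$ is only a distribution and $\H_L$ is defined through the divergent renormalisation $c_\epsilon$, ``conditioning $\xi$ to look like a prescribed well'' and controlling the corresponding ground state energy cannot be carried out by elementary means, and the mild loss from the sharp exponent $\kappa^{-1}$ to an arbitrary $\mu>\kappa^{-1}$ reflects both the replacement of the (non-compactly supported) Ladyzhenskaya extremal by a compactly supported near-extremal and the renormalisation corrections. The remaining ingredients --- the spectral bound, the Feynman--Kac identification of $\langle e^{-t\H_L}\psi,\psi\rangle$, the rescaling, and the passage between the two equivalent forms of the tail estimate --- are routine.
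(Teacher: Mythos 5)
Your proposal is correct and ultimately takes the same route as the paper: the entire content of the lemma is the first-eigenvalue tail estimate of \cite{chouk2020asymptotics} (Theorems 2.6 and 2.7 there), applied after matching normalizations between $-\tfrac12\Delta-\xi$ and the operator $\Delta+2\xi$ considered in that reference, followed by the elementary observation that $1-e^{c\lambda e^{\mu\lambda}}\sim c(-\lambda)e^{\mu\lambda}$ as $\lambda\to-\infty$. The spectral-theorem/Feynman--Kac scaffolding and the Gaussian-cost heuristic in your first two paragraphs are illuminating but not needed, since the cited result already delivers the quantitative tail bound directly.
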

\change{
\begin{proof}
  We denote by $\bm{\lambda}(Q_L, \beta)$ the largest eigenvalue of $\Delta + \beta \xi$. Then, one has
  \begin{equation}\label{eq:lambda_bm_lambda}
    \lambda_{1, L} = - \frac{1}{2} \bm{\lambda}(Q_L, 2) \quad  \text{in law}.
  \end{equation}
  By \cite[Theorem~2.6 and Theorem~2.7]{chouk2020asymptotics},
  if $\mu' > 2 \kappa^{-1}$, there exist $r \in (0, \infty)$ and $M \in (0, \infty)$ such that
  \begin{equation}\label{eq:bm_lambda_tail}
    \P(\bm{\lambda}(Q_L, 2) \geq 2 x)
    \geq 1 - \exp \Big(- \frac{2 x e^{2 \log L - \frac{\mu'}{2^2}(2 x)}}{2 r^2} \Big)
    = 1 - \exp \Big(- \frac{ x e^{2 \log L - \frac{\mu'}{2} x}}{ r^2} \Big)
  \end{equation}
  for every $x \geq M$. Therefore, \eqref{eq:lambda_bm_lambda} and \eqref{eq:bm_lambda_tail} prove the claim
  ($\mu = \frac{\mu'}{2}$, $c= \frac{e^{2 \log L}}{r^2}$).
\end{proof}
}

\begin{proof}[Proof of Theorem \ref{thm:lifshitz_tails_white_noise}]
  By Lemma \ref{lem:Dirichlet_comparisons}, we have
  $N(\lambda) \geq \wnexpect[N_1(\lambda)] \geq \wnP(\lambda_{1, 1} \leq \lambda)$.
  Let $\mu > \kappa^{-1}$. If $-\lambda$ is sufficiently large, then Lemma~\ref{lem:eigenvalue_tail_estimate} yields
  \begin{equation*}
    N(\lambda) \geq 1 - e^{c \lambda e^{\mu \lambda}} \geq \frac{c (-\lambda)}{2} e^{\mu \lambda}.
  \end{equation*}
  Therefore, $\liminf_{\lambda \to -\infty} (-\lambda)^{-1} \log N(\lambda) \geq - \mu$.
  Since $\mu > \kappa^{-1}$ is arbitrary, we complete the proof of \eqref{eq:liminf_lifshitz_tails_white_noise}
  and hence the proof of Theorem \ref{thm:lifshitz_tails_white_noise}.
\end{proof}

Finally, we apply Theorem \ref{thm:convergence_to_IDS} to the moment of the PAM in the plane.
\begin{corollary}\label{cor:moment_of_pam}
  As before, let $u$ be the solution of the PAM in $\R^2$ with initial condition $\delta_0$.
  Then, we have
  \begin{equation*}
    \wnexpect[u(t, 0)]
    \begin{cases}
      < \infty \qquad &\mbox{if } t < \kappa^{-1}, \\
      = \infty &\mbox{if } t > \kappa^{-1}.
    \end{cases}
  \end{equation*}
  Furthermore, for $t \in \R \setminus \{\kappa^{-1}\}$, we have
  \begin{equation*}
    \wnexpect[u(t, 0)] = \int_{\R} e^{-\lambda t} N(d \lambda)
    = \frac{t^{\frac{t}{2 \pi} - 1}}{2 \pi}
    \expect[e^{t \zeta}].
  \end{equation*}
\end{corollary}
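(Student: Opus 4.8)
\emph{Plan.} Lemma~\ref{lem:second_moment_of_pam} together with Theorem~\ref{thm:convergence_to_IDS} already establishes \eqref{eq:u_and_SILT} and $\wnexpect[u(t,0)]=\int_{\R}e^{-t\lambda}\,N(d\lambda)$ on the range $t\in(0,(2\kappa)^{-1})$, so the remaining task is to push everything up to $t\in(0,\kappa^{-1})$ and, for $t>\kappa^{-1}$, to show that all three quantities equal $+\infty$. (We only consider $t>0$; the right-hand side of the claimed identity is not defined for $t\le 0$, nor is $u(t,\cdot)$.) The idea is to get the lower bound through the eigenvalue counting measures $N_L$ and the upper bound through Fatou's lemma applied to the mollified Feynman--Kac formula, and then to match the two.

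First I would prove that $\wnexpect[u(t,0)]\ge\int_{\R}e^{-t\lambda}\,N(d\lambda)$ for every $t>0$. As in the proof of Lemma~\ref{lem:laplace_trasform_converges}, the field $x\mapsto u_{\infty}^{x,0}(t,x)=u_{\infty}^{0,x}(t,0)$ (Lemma~\ref{lem:shift_of_u_infty}) is stationary and ergodic under spatial translations (Proposition~\ref{prop:ergodicitiy_of_white_noise}), so the spatial ergodic theorem --- applied with a truncation to accommodate a possibly infinite mean --- gives $L^{-2}\int_{Q_L}u_{\infty}^{0,x}(t,0)\,dx\to\wnexpect[u(t,0)]$ in $[0,\infty]$, $\wnP$-a.s. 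Since $u_L^{x,0}\le u_{\infty}^{x,0}$ and $L^{-2}\int_{Q_L}u_L^{x,0}(t,x)\,dx=\int_{\R}e^{-t\lambda}N_L(d\lambda)$ by \eqref{eq:trace_of_exp_of_AH}, and since $\mu\mapsto\int_{\R}e^{-t\lambda}\mu(d\lambda)$ is lower semicontinuous with respect to vague convergence (write $e^{-t\,\cdot}$ as an increasing limit of nonnegative functions in $C_c(\R)$), the vague convergence $N_L\to N$ from Theorem~\ref{thm:convergence_to_IDS} yields $\int_{\R}e^{-t\lambda}N(d\lambda)\le\liminf_L\int_{\R}e^{-t\lambda}N_L(d\lambda)\le\wnexpect[u(t,0)]$. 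Combined with \eqref{eq:divergence_of_exponential_moment_of_N} this already gives $\wnexpect[u(t,0)]=\infty$ for $t>\kappa^{-1}$; since $\expect[e^{t\zeta}]=\infty$ there as well by Theorem~\ref{thm:right_tail_of_SILT_of_BB}, the stated chain of equalities holds with every term equal to $+\infty$.

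For the matching upper bound on $(0,\kappa^{-1})$ I would use the mollified equation. By the Feynman--Kac formula \eqref{eq:feynman_kac_for_u_infty}, Tonelli's theorem, and the identity $\wnexpect[\exp(\int_0^t\xi_{\epsilon}(X^t_s)\,ds)]=e^{\chi^t_{\epsilon}([0,t]_{\leq}^2)}$ used in Lemma~\ref{lem:second_moment_of_pam}, one has $\wnexpect[\tilde u^{\delta_0}_{\infty,\epsilon}(t,0)]=\tfrac{1}{2\pi t}\expect[e^{\chi^t_{\epsilon}([0,t]_{\leq}^2)-c_{\epsilon}t}]$, and by the scaling property of $X^t$, Lemma~\ref{lem:renormalization_constant_of_SILT_of_BB} (which gives $\expect[\chi^1_{t^{-1}\epsilon}([0,1]_{\leq}^2)]-c_{\epsilon}\to\tfrac{1}{2\pi}\log t$), Theorem~\ref{thm:construction_of_SILT_of_BB}-(ii), and the uniform exponential integrability of Theorem~\ref{thm:construction_of_SILT_of_BB}-(iii) (valid precisely for $t<\kappa^{-1}$), this converges to $\tfrac{t^{\frac{t}{2\pi}-1}}{2\pi}\expect[e^{t\zeta}]$; this is the second half of the proof of Lemma~\ref{lem:second_moment_of_pam} with the Hölder splitting removed, which is exactly what extends its validity to the whole range $(0,\kappa^{-1})$. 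Since $\tilde u^{\delta_0}_{\infty,\epsilon}(t,0)\to\tilde u^0_{\infty}(t,0)=u(t,0)$ in probability (by the definition of $\tilde u^0_{\infty}$ and Proposition~\ref{prop:pam_in_plane}), Fatou's lemma along an a.s.-convergent subsequence gives $\wnexpect[u(t,0)]\le\liminf_{\epsilon}\wnexpect[\tilde u^{\delta_0}_{\infty,\epsilon}(t,0)]=\tfrac{t^{\frac{t}{2\pi}-1}}{2\pi}\expect[e^{t\zeta}]$. Together with the lower bound this yields equality on $(0,\kappa^{-1})$, and finiteness there follows from $\expect[e^{t\zeta}]<\infty$ (Theorem~\ref{thm:right_tail_of_SILT_of_BB}); the remaining equality with $\int_{\R}e^{-t\lambda}N(d\lambda)$ is \eqref{eq:laplace_transform_of_N}.

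The main obstacle is the lower bound. The direct approach --- passing $\wnexpect$ through the limit $\epsilon\to0$ in the Feynman--Kac formula --- requires uniform integrability of $\{\tilde u^{\delta_0}_{\infty,\epsilon}(t,0)\}_{\epsilon}$, which Lemma~\ref{lem:second_moment_of_pam} supplies only on $(0,(2\kappa)^{-1})$, and on the larger range Fatou's lemma gives the inequality in the wrong direction. The resolution is to let the already-constructed IDS $N$ carry the information: its Laplace transform was pinned down on $(0,\kappa^{-1})$ by analytic continuation in Theorem~\ref{thm:convergence_to_IDS}, and the divergence $\int e^{-t\lambda}N(d\lambda)=\infty$ for $t>\kappa^{-1}$ is then transported back to $\wnexpect[u(t,0)]$ via the ergodic theorem and the lower semicontinuity of Laplace transforms under vague convergence, sidestepping any need to exchange $\wnexpect$ with $\lim_{\epsilon}$ outside the range where uniform integrability is available.
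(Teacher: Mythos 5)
Your proposal is correct and follows essentially the same route as the paper: the upper bound on $(0,\kappa^{-1})$ via Fatou's lemma applied to the mollified Feynman--Kac representation together with Theorem~\ref{thm:construction_of_SILT_of_BB}, and the lower bound $\int_{\R}e^{-t\lambda}N(d\lambda)\le\wnexpect[u(t,0)]$ for all $t>0$ via the trace identity \eqref{eq:trace_of_exp_of_AH}, vague convergence of $N_L$, and the ergodic theorem without integrability assumptions. Your explicit remarks on lower semicontinuity of the Laplace transform under vague convergence and on passing to an a.s.-convergent subsequence before applying Fatou are careful renderings of steps the paper leaves implicit.
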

\begin{proof}
  If $t < \kappa^{-1}$, by Fatou's lemma, the Feynman-Kac formula and Theorem \ref{thm:construction_of_SILT_of_BB}-(iii), we have
  \begin{equation}\label{eq:upper_bound_u_by_SILT}
    \wnexpect[u(t, 0)] \leq \liminf_{\epsilon \to 0+} \wnexpect[ u_{\epsilon}(t, 0)]
    = \frac{t^{\frac{t}{2\pi} - 1}}{2 \pi} \expect[e^{t \zeta}] < \infty.
  \end{equation}
  Suppose $t > 0$.
  If $\phi \in C_c^{\infty}(\R)$ with $\norm{\phi}_{\infty} \leq 1$,
  we have
  \begin{equation*}
    \int_{\R} e^{-t\lambda} \phi(\lambda) N(d\lambda)
    = \lim_{L \to \infty} \int_{\R} e^{-t \lambda} \phi(\lambda) N_L(d\lambda)
    \leq \liminf_{L \to \infty} \frac{1}{L^2} \int_{Q_L} u_{\infty}^{0, x}(t, 0) dx,
  \end{equation*}
  where we used the identity \eqref{eq:trace_of_exp_of_AH}.
  By the ergodic theorem, as no assumption of integrability is needed, we have
  \begin{equation*}
    \lim_{L \to \infty} \frac{1}{L^2} \int_{Q_L} u_{\infty}^{0, x}(t, 0) dx = \wnexpect[u_{\infty}^{0, 0}(t, 0)],
    \qquad \wnP\mbox{-a.s.}
  \end{equation*}
  Therefore, recalling Proposition \ref{prop:pam_in_plane}, we conclude
  \begin{equation}\label{eq:lower_bound_of_u_by_IDS}
    \int_{\R} e^{-t \lambda} N(d\lambda) \leq \wnexpect[u(t, 0)].
  \end{equation}
  Thus, the claims of Corollary \ref{cor:moment_of_pam} follows from
  \eqref{eq:upper_bound_u_by_SILT}, \eqref{eq:lower_bound_of_u_by_IDS} and
  Theorem \ref{thm:convergence_to_IDS}.

  Alternatively, we can prove the blow-up of the moment by applying a result on the first eigenvalue asymptotics.
  Indeed, we have
  \begin{equation*}
    \frac{1}{L^2} e^{-t\lambda_{1,L}} \leq \int_{\R} e^{-t\lambda} N_L(d\lambda) \leq \frac{1}{L^2} \int_{Q_L} u_{\infty}^{0, x}(t, 0) dx.
  \end{equation*}
  By Lemma \ref{lem:eigenvalue_tail_estimate}, the moment of the left hand side explodes if $t > \kappa^{-1}$.
\end{proof}
\begin{remark}\label{rem:moment_of_finite_volume_pam}
  As the proof above suggests, Corollary \ref{cor:moment_of_pam} holds if $u$ is replaced by
  $u_L^{0, 0}$.
\end{remark}
\begin{remark}\label{rem:moment_of_pam_with_general_initial_condition}
  For $t \in (0, \kappa^{-1})$, we have $\wnexpect[u(t, x)] \leq \frac{t^{\frac{t}{2\pi} -1}}{2\pi} \expect[e^{t \zeta}] < \infty$.
  Indeed, we have
  \begin{align*}
    \wnexpect[u(t, x)] &\leq \liminf_{\epsilon \to 0+} \wnexpect[\tilde{u}^{\delta_0}_{\infty, \epsilon}(t, x)] \\
    &= \liminf_{\epsilon \to 0+} p_t(x) \expect\Big[ \exp \Big( \iint_{[0, t]_{\leq}^2} p_{\epsilon}
    \Big(X^t_s - X^t_r - \frac{(s-r)x}{t} \Big) ds dr - c_{\epsilon}t \Big) \Big].
  \end{align*}
  As in the proof of \eqref{eq:ineq_chi_greater_than_beta}, we observe
  \begin{align*}
    \MoveEqLeft[3]
    \expect\Big[ \Big( \iint_{[0, t]_{\leq}^2} p_{\epsilon}
    \Big(X^t_s - X^t_r - \frac{(s-r)x}{t} \Big) ds dr \Big)^m \Big] \\
    &= \int_{(\R^2 \times [0, t]_{\leq}^2)^m} e^{-\epsilon \sum_{j=1}^m \abs{x_j}^2 }
    \expect[ e^{i \sum_{j=1}^m \inp{x_j}{X^t_{s_j} - X^t_{r_j} - \frac{s_j - r_j}{t} x}}] \prod_{j=1}^m ds_j dr_j dx_j \\
    &\leq \int_{(\R^2 \times [0, t]_{\leq}^2)^m} e^{-\epsilon \sum_{j=1}^m \abs{x_j}^2 }
    \expect[ e^{i \sum_{j=1}^m \inp{x_j}{X^t_{s_j} - X^t_{r_j}}}] \prod_{j=1}^m ds_j dr_j dx_j \\
    &= \expect\Big[ \Big( \iint_{[0, t]_{\leq}^2} p_{\epsilon}
    (X^t_s - X^t_r) ds dr \Big)^m \Big]
  \end{align*}
  and therefore
  \begin{equation*}
    \expect\Big[ \exp \Big( \int_{[0, t]_{\leq}^2} p_{\epsilon}
    \Big(X^t_s - X^t_r - \frac{(s-r)x}{t} \Big) ds dr - c_{\epsilon}t \Big) \Big]
    \leq \expect[e^{\chi_{\epsilon}^t([0,t]_{\leq}^2) - c_{\epsilon} t}].
  \end{equation*}
  Hence,
  \begin{equation*}
    \wnexpect[u(t, x)] \leq \liminf_{\epsilon \to 0+} \frac{1}{2\pi t} \expect[e^{\chi_{\epsilon}^t([0, t]^2_{\leq})
    - c_{\epsilon} t} ] = \frac{t^{\frac{t}{2\pi} - 1}}{2 \pi} \expect[e^{t \zeta}] < \infty.
  \end{equation*}
  However, we do not know whether $\wnexpect[u(t, x)] = \infty$ for $t > \kappa^{-1}$.

  Similarly, in view of \eqref{eq:uniform_integrability_of_e_to_beta}, we have
  $\wnexpect[\tilde{u}^{\phi}_{\infty}(t, x)] < \infty$ if $\phi \in L^{\infty}(\R^2)$ and $t \in (0, \kappa^{-1})$.
  It is not known whether the moment blows up for $t > \kappa^{-1}$ with general nonnegative initial condition,
  especially for $\phi = \indic$.
  Furthermore, we do not know whether the moment blows up or not at $t = \kappa^{-1}$.
\end{remark}
\begin{remark}\label{rem:extension_to_other_settings}
  The work \cite{labbe_continuous_2019} constructed the Anderson Hamiltonian
  with three-dimensional white noise on the box $[-\frac{L}{2}, \frac{L}{2}]^3$ and showed that the spectrum consists of
  the eigenvalues $\{\lambda_{n, L}^{\operatorname{3D}}\}_{n=1}^{\infty}$.
  It is natural to extend our work to this three-dimensional settings, namely
  to prove the convergence of
  $N_L^{\operatorname{3D}}(\lambda) \defby L^{-3} \sum_{n=1}^{\infty} \indic_{\{\lambda_{n,L}^{\operatorname{3D}} \leq \lambda\}}$
  and the Lifshitz tail asymptotics.
  We remark that the path integral approach is not available in this setting, as we expect
  $\int_{\R} e^{-t\lambda} N^{\operatorname{3D}}(d\lambda) = \infty$ for every $t \in \R$.
  In fact, we should expect from
  Remark \ref{rem:variations} the asymptotics
  \begin{equation}\label{eq:lifshitz_tails_3d_white_noise}
    \lim_{\lambda \to -\infty} (-\lambda)^{-\frac{1}{2}} \log N^{\operatorname{3D}}(\lambda)
    = - \frac{2 \sqrt{2} }{3\sqrt{3} \kappa(3,3)},
  \end{equation}
  where $\kappa(3, 3)$ is the best constant of the inequality
  $\norm{f}_{L^4(\R^3)}^4 \leq C \norm{f}_{L^2(\R^3)} \norm{\nabla f}_{L^2(\R^3)}^3$.
  For the construction of the IDS in this setting, we believe that
  the functional analytic approach \cite[Section VI.1.3]{carmona1990spectral} should be adopted.
  We also believe that the lower bound of \eqref{eq:lifshitz_tails_3d_white_noise} can be obtained as in the proof
  of \eqref{eq:liminf_lifshitz_tails_white_noise}, if we have a corresponding result of Lemma \ref{lem:eigenvalue_tail_estimate}
  for the three-dimensional white noise.

  \delete{
  In view of \cite{mouzard2020weyl}, it is interesting to extend our work to Anderson Hamiltonians on more general
  two-dimensional manifolds. A natural question is how the geometry influences the critical exponent of the corresponding IDS or
  equivalently the blow-up time of the moment of the PAM.
  }
\end{remark}

\appendix
\section{Lifshitz tails of Anderson Hamiltonians with generalized Gaussian potentials}\label{sec:lifshiz_tail_riesz}
Let $V$ be the generalized centered Gaussian field in $\R^d$ whose covariance is given by
\begin{equation*}
  \expect^V[ V(x) V(y) ] = \nu \abs{x-y}^{-\sigma} \qquad \sigma \in (0, \min\{2, d\}).
\end{equation*}
We denote by $V_{L}^{\n}$ the restriction of $V$ on $Q_L=[-\frac{L}{2}, \frac{L}{2}]^d$ with Dirichlet boundary conditions.
Since the regularity of $V$ is better than $-\frac{\sigma}{2} - \epsilon$ for every $\epsilon > 0$, $V_L^{\n}$ can be
enhanced to $\bm{V}_L^{\n}$ without renormalization, and hence we can define $\H_L^{\bm{V}_L^{\n}}$.
As in Section \ref{sec:ids}, we can prove that the eigenvalue counting measure associated with the distribution function
\begin{equation*}
  N_L^V(\lambda) \defby \frac{1}{L^2} \sum_{n=1}^{\infty} \indic_{\{\lambda_{n, L}^{\bm{V}_L^{\n}} \leq \lambda \}}
\end{equation*}
converges vaguely to $N^V(d\lambda)$ almost surely.
The measure $N^V(d\lambda)$ is characterized by the identity
\begin{equation}\label{eq:laplace_transform_of_N_V}
  \int_{\R} e^{-t\lambda} N^{V}(d\lambda) = \frac{1}{(2 \pi t)^{\frac{d}{2}}}
  \expect \Big[ \exp\Big(\frac{\nu}{2} \int_0^t \int_0^t \frac{dr ds}{\abs{X_r^t - X_s^t}^{\sigma}} \Big) \Big].
\end{equation}

The goal of this section is to prove the Lifshitz tail of $N^V$.
We define the distribution function by $N^V(\lambda) \defby N^V((-\infty, \lambda])$ and set
\begin{equation*}
  \rho(d, \sigma) \defby \sup_{\norm{f}_{L^2(\R^d)} = 1}
  \int_{\R^d} \Big( \int_{\R^d} \frac{f(\lambda + \gamma) f(\gamma)}{\sqrt{1 + 2^{-1} \abs{\lambda + \gamma}^2} \sqrt{1 +
  2^{-1} \abs{\gamma}^2}} d\gamma
  \Big)^2 \frac{C_{d, \sigma}}{\abs{\lambda}^{d- \sigma}} d\lambda,
\end{equation*}
where $C_{d, \sigma} \defby \pi^{-\frac{d}{2}} 2^{-\sigma} \Gamma(\frac{d-\sigma}{2}) \Gamma(\frac{\sigma}{2})^{-1}$.
We note that $(2\pi)^{d} C_{d, \sigma} \abs{\cdot}^{-d + \sigma}$ is the Fourier transform of $\abs{\cdot}^{-\sigma}$:
$\int_{\R^d} \abs{x}^{-\sigma} e^{-i \inp{x}{y}} dx = (2\pi)^{d} C_{d, \sigma} \abs{y}^{-d + \sigma}$.
\begin{theorem}\label{thm:lifshitz_tail_riesz}
  We have
  \begin{equation*}
    \lim_{\lambda \to -\infty} (-\lambda)^{-\frac{4-\sigma}{2}} \log N^V(\lambda) = - \frac{1}{2 \nu \rho(d, \sigma)}.
  \end{equation*}
\end{theorem}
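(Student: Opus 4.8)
The plan is to determine the large-$t$ asymptotics of the Laplace transform $\int_{\R}e^{-t\lambda}N^V(d\lambda)$ and to invert it through an exponential Tauberian theorem. Because $\sigma<\min\{2,d\}$, the relevant Riesz functional of the Brownian bridge has finite exponential moments of every order, so this Laplace transform is finite for all $t>0$ and the Tauberian argument will furnish matching upper and lower bounds --- in contrast with the white noise case, where divergence of the Laplace transform forced the detour through the first eigenvalue.

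First, starting from \eqref{eq:laplace_transform_of_N_V} and using the scaling $X^t_s\dequal\sqrt{t}\,X^1_{s/t}$ I would write
\[
  \int_{\R}e^{-t\lambda}N^V(d\lambda)=\frac{1}{(2\pi t)^{d/2}}\,\expect\bigl[e^{\frac{\nu}{2}t^{2-\sigma/2}\Theta}\bigr],\qquad
  \Theta\defby\iint_{[0,1]^2}\frac{dr\,ds}{\abs{X^1_r-X^1_s}^{\sigma}} .
\]
Everything then reduces to the behaviour of $\log\expect[e^{\theta\Theta}]$ as $\theta\to\infty$, and the target is the sharp asymptotics
\[
  \lim_{\theta\to\infty}\theta^{-\frac{2}{2-\sigma}}\log\expect\bigl[e^{\theta\Theta}\bigr]=c_0(d,\sigma)
\]
for an explicit constant $c_0$; plugging $\theta=\tfrac{\nu}{2}t^{2-\sigma/2}$ gives $\log\int_{\R}e^{-t\lambda}N^V(d\lambda)\sim c_0(\nu/2)^{2/(2-\sigma)}\,t^{(4-\sigma)/(2-\sigma)}$.

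The heart of the proof is the identification of $c_0$. I would obtain the exponential-moment asymptotics by the moment method, in the spirit of \cite{chen_random_2010}: expand $\expect[\Theta^m]$, represent $\abs{x}^{-\sigma}$ through its Fourier transform $(2\pi)^d C_{d,\sigma}\abs{\cdot}^{\sigma-d}$, and carry out the Gaussian integration over the bridge. As in Section \ref{sec:brownian_bridge}, Lemma \ref{lem:girsanov_brownian_bridge} together with the comparison estimates between $\chi^t$, $\beta$ and $\alpha$ lets one pass between the bridge, the Brownian motion and a pair of independent motions without changing the leading exponential order, so the constant agrees with the Brownian one. The high-moment behaviour is governed by a Donsker--Varadhan-type variational problem for the rescaled occupation measure: after the self-similar rescaling dictated by $\sigma$, the exponential moment concentrates on path strategies for which the occupation profile $g=f^2$, $\norm{f}_{L^2(\R^d)}=1$, maximizes $\iint\abs{x-y}^{-\sigma}g(x)g(y)\,dx\,dy$ against the kinetic penalty $\tfrac12\int\abs{\nabla f}^2$. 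Passing to Fourier variables, using that $(2\pi)^d C_{d,\sigma}\abs{\cdot}^{\sigma-d}$ is the multiplier of the Riesz kernel and that the quadratic form $\tfrac12\int\abs{\nabla f}^2+\tfrac12\int f^2$ has Fourier symbol $1+\tfrac12\abs{\xi}^2$, this variational problem becomes exactly the spectral quantity $\rho(d,\sigma)$, and tracking the constants expresses $c_0$ in closed form through $\rho(d,\sigma)$.

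Finally, with $g(t)\defby\log\int_{\R}e^{-t\lambda}N^V(d\lambda)\sim A\,t^{p}$, $p=\tfrac{4-\sigma}{2-\sigma}>1$ and $A=c_0(\nu/2)^{2/(2-\sigma)}$, de Bruijn's exponential Tauberian theorem gives $\log N^V(\lambda)\sim-B\,(-\lambda)^{q}$ with $q=\tfrac{p}{p-1}=\tfrac{4-\sigma}{2}$ and $B=\tfrac{p-1}{p}(Ap)^{-1/(p-1)}$; substituting the value of $A$ and the closed form of $c_0$ collapses $B$ to $\tfrac{1}{2\nu\rho(d,\sigma)}$, which is the claimed identity. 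The main obstacle is the middle step: proving the sharp \emph{two-sided} large-deviation constant for $\Theta$ --- for the Brownian bridge rather than the motion, and in general dimension $d$ --- and matching it with the Fourier-analytic expression $\rho(d,\sigma)$. The upper bound requires uniform control of the combinatorics of $\expect[\Theta^m]$, the lower bound requires a near-optimal essentially-constant-profile path strategy, and the passage from the Brownian constant to the bridge constant is precisely the surgery performed in Section \ref{sec:brownian_bridge}.
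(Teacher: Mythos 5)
Your proposal is correct and follows essentially the same route as the paper: reduce via the scaling $X^t_s\dequal\sqrt{t}\,X^1_{s/t}$ and the identity \eqref{eq:laplace_transform_of_N_V} to the exponential-moment asymptotics of the bridge Riesz functional, transfer between the bridge and the Brownian motion by Lemma \ref{lem:girsanov_brownian_bridge} together with the characteristic-function moment comparison (this is exactly Lemma \ref{lem:lifshitz_tail_lower_bound} and Lemma \ref{lem:lifshitz_tail_upper_bound}), and invert by the exponential Tauberian theorem (Proposition \ref{prop:minlos_tauberian}), whose constants you track correctly. The only difference is that the sharp Brownian-motion constant, which you propose to re-derive by the moment method and a Donsker--Varadhan variational analysis, is simply quoted in the paper as Theorem \ref{thm:brownian_intersection} from \cite{chen_large_2010}, so the "main obstacle" you identify is imported rather than proved.
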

\delete{
\begin{remark}\label{rem:lifshitz_tail_smooth_gaussian}
  Let $\tilde{V}$ be a centered, ergodic and stationary Gaussian field on $\R^d$ such that
  the covariance $\tilde{\gamma}(x) = \expect^{\tilde{V}}[\tilde{V}(0)\tilde{V}(x)]$ is continuous
  and let $N^{\tilde{V}}$ be the IDS of $-\frac{1}{2} \Delta - \tilde{V}$.
  Then, as shown in \cite[Proposition VI.2.2]{carmona1990spectral}, we have
  \begin{equation*}
    \lim_{\lambda \to -\infty} (-\lambda)^{-2} \log N^{\tilde{V}}(\lambda) = -\frac{1}{2\tilde{\gamma}(0)}.
  \end{equation*}
\end{remark}
}
\begin{remark}\label{rem:variations}
  We review relations among variations. As in \cite[Appendix A.3]{chen2014}, we set
  \begin{align}
    \kappa(d, \sigma) &\defby \inf \Big\{ C \in (0, \infty) \Big\vert
    \int_{\R^d \times \R^d} \frac{f(x)^2 f(y)^2}{\abs{x-y}^{\sigma}} dx dy \notag \\
    &\hspace{4cm} \leq C \norm{f}_{L^2(\R^d)}^{4-\sigma} \norm{\nabla f}_{L^2(\R^d)}^{\sigma},
    \,\, \forall f \in C_c^{\infty}(\R^d) \Big\}, \label{eq:def_kappa_d_sigma}\\
    M(d, \sigma) &\defby \sup_{f \in C_c^{\infty}(\R^d), \,\, \norm{f}_{L^2(\R^d)} = 1} \Big\{
    \Big(\int_{\R^d \times \R^d} \frac{f(x)^2 f(y)^2}{\abs{x-y}^{\sigma}} dx dy \Big)^{\frac{1}{2}}
    - \frac{1}{2} \int_{\R^d} \abs{\nabla f(x)}^2 dx \Big\} \notag.
  \end{align}
  By \cite[Lemma A.4]{chen2014}, we have
  \begin{equation*}
    M(d, \sigma) = \frac{4-\sigma}{4} \Big(\frac{\sigma}{2} \Big)^{\frac{\sigma}{4-\sigma}}
    \kappa(d, \sigma)^{\frac{2}{4-\sigma}}.
  \end{equation*}
  By \cite[Theorem 1.5]{bass2009}, we have
  \begin{equation*}
    \rho(d, \sigma) =  M(d, \sigma)^{2- \frac{\sigma}{2}}.
  \end{equation*}
  We remark that \cite[Theorem 1.5]{bass2009} wrongly claims ``$\rho(d, \sigma) = (2\pi)^{-d}
  M(d, \sigma)^{2- \frac{\sigma}{2}}$". Indeed, the expression
  $(2\pi)^{-d(p+1)}$ and $(2\pi)^{d(p-1)}$ in \cite[(7.22)]{bass2009}
  should be corrected to $(2\pi)^{-dp}$ and $(2\pi)^{dp}$ respectively.
  Therefore, we have
  \begin{equation}\label{eq:kappa_and_rho}
    \rho(d, \sigma) = \Big( \frac{4-\sigma}{4} \Big)^{\frac{4-\sigma}{2}} \Big( \frac{\sigma}{2} \Big)^{\frac{\sigma}{2}} \kappa(d, \sigma).
  \end{equation}

  \change{Let us have a heuristic discussion about how Theorem~\ref{thm:lifshitz_tail_riesz} is consistent with other results.
  We first consider the case of the two-dimensional white noise $\xi$.
  Since we have $\expect[\xi(x)\xi(y)] = \delta_{\R^2}(x-y)$ heuristically and
  since the two-dimensional Dirac's function $\delta_{\R^2}$ satisfies the scaling relation
  \begin{equation*}
    \delta_{\R^2}(\lambda \cdot) = \lambda^{-2} \delta_{\R^2}(\cdot), \quad \lambda>0,
  \end{equation*}
  we should set $\sigma = 2$.
  Furthermore, we should set
  \begin{equation*}
    \kappa(2, 2) \defby
    \inf \Big\{ C \in (0, \infty) \Big\vert
    \int_{\R^d} f(x)^4 d x
    \leq C \norm{f}_{L^2(\R^d)}^{2} \norm{\nabla f}_{L^2(\R^d)}^{2},
    \,\, \forall f \in C_c^{\infty}(\R^d) \Big\};
  \end{equation*}
  namely in \eqref{eq:def_kappa_d_sigma} we replaced $\frac{1}{\abs{x-y}^{\sigma}}$ by $\delta_{\R^2}(x - y)$.
  In this setting,  \eqref{eq:kappa_and_rho} becomes $\rho(2, 2) = 2^{-1} \kappa(2, 2)$ and
  this shows consistency between Theorem \ref{thm:lifshitz_tail_riesz} and \eqref{eq:lifshitz_tails_white_noise}.
   }
  Similarly, the case where $V$ is a continuous Gaussian field corresponds to $\sigma = 0$ and \eqref{eq:kappa_and_rho} becomes
  $\rho(d, 0) = \kappa(d, 0) = 1$, which is consistent with \change{the result of \cite{kirsch_ids_82} mentioned in Remark~\ref{rem:gaussian_lifshitz}}.
  The case where $V$ is the one-dimensional white noise corresponds to $(d, \sigma) = (1, 1)$ and,
  since $\kappa(1, 1)$ is computable \cite[Theorem C.4]{chen_random_2010},
  \eqref{eq:kappa_and_rho} becomes $\rho(1, 1) = \frac{3}{8\sqrt{2}}$,
  which is consistent with the result in \cite{fukushima_spectra_1977}.
  The case where $V$ is the three-dimensional white noise corresponds to $(d, \sigma) = (3, 3)$ and hence
  this justifies the conjecture \eqref{eq:lifshitz_tails_3d_white_noise}.
\end{remark}
Theorem \ref{thm:lifshitz_tail_riesz} essentially follows from Theorem \ref{thm:brownian_intersection} below,
combined with technical argument (Lemma \ref{lem:lifshitz_tail_lower_bound} and Lemma \ref{lem:lifshitz_tail_upper_bound})
to replace the Brownian motion by the Brownian bridges and a Tauberian theorem (Proposition \ref{prop:minlos_tauberian}).
\begin{theorem}[{\cite[Theorem 1.1]{chen_large_2010}}]\label{thm:brownian_intersection}
  We have
  \begin{equation*}
    \lim_{\theta \to \infty} \theta^{-\frac{2}{2-\sigma}} \log
    \expect\Big[ \exp\Big(\theta \int_0^1 \int_0^1 \frac{dsdr}{\abs{B_s - B_r}^{\sigma}}\Big) \Big]
    = 2^{\frac{6}{2-\sigma}} (2-\sigma) (4-\sigma)^{-\frac{4-\sigma}{2-\sigma}} \rho(d, \sigma)^{\frac{2}{2-\sigma}}.
  \end{equation*}
\end{theorem}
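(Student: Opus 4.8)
The plan is to sandwich $\log\expect[\exp(\theta\ell_1)]$, where I write $\ell_t\defby\int_0^t\int_0^t\abs{B_s-B_r}^{-\sigma}\,dr\,ds$, between a Donsker-Varadhan type lower bound and a high-moment (Gärtner-Ellis) upper bound, both organised around the Gagliardo-Nirenberg best constant $\kappa(d,\sigma)$ of Remark~\ref{rem:variations}; the closed form in the statement then comes out by repackaging that constant through \eqref{eq:kappa_and_rho}. Since $\sigma<\min\{2,d\}$, $\ell_t$ is finite a.s.\ and in every $L^p$, and Brownian scaling gives $\ell_t\dequal t^{(4-\sigma)/2}\ell_1$; in particular $\expect[\exp(\theta\ell_1)]=\expect[\exp(\ell_T)]$ with $T=\theta^{2/(4-\sigma)}$, so $\theta^{-2/(2-\sigma)}\log\expect[\exp(\theta\ell_1)]=T^{-(4-\sigma)/(2-\sigma)}\log\expect[\exp(\ell_T)]$, and one may pass between the two pictures freely.

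\emph{Lower bound.} Work with $\expect[\exp(\ell_T)]$ as $T\to\infty$ and write $\ell_T=T^2\iint\abs{x-y}^{-\sigma}L_T(dx)L_T(dy)$ with $L_T=\tfrac1T\int_0^T\delta_{B_s}\,ds$ the normalised occupation measure. Given $f\in C_c^\infty(\R^d)$ with $\norm{f}_{L^2(\R^d)}=1$ and a scale $\rho>0$, restrict the expectation to the event that $L_T$ is close to the rescaled profile $x\mapsto\rho^{d}f(\rho x)^2$; by the Donsker-Varadhan lower bound for the occupation measure this event has probability at least $\exp(-T\rho^2\cdot\tfrac12\norm{\nabla f}_{L^2}^2(1+o(1)))$, and on it $\ell_T\ge T^2\rho^{\sigma}E(f)(1+o(1))$ with $E(f)\defby\int\int\abs{x-y}^{-\sigma}f^2(x)f^2(y)\,dx\,dy$. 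Optimising $T^2\rho^{\sigma}E(f)-\tfrac12 T\rho^2\norm{\nabla f}_{L^2}^2$ over $\rho>0$ (the optimum is at $\rho\propto T^{1/(2-\sigma)}$) and then over $f$, using $\sup_{\norm f_{L^2}=1}E(f)/\norm{\nabla f}_{L^2}^{\sigma}=\kappa(d,\sigma)$, yields
\begin{equation*}
  \liminf_{T\to\infty}T^{-\frac{4-\sigma}{2-\sigma}}\log\expect[\exp(\ell_T)]\ge\tfrac{2-\sigma}{2}\,\sigma^{\frac{\sigma}{2-\sigma}}\,\kappa(d,\sigma)^{\frac{2}{2-\sigma}},
\end{equation*}
and a short computation with \eqref{eq:kappa_and_rho} identifies the right-hand side with $2^{6/(2-\sigma)}(2-\sigma)(4-\sigma)^{-(4-\sigma)/(2-\sigma)}\rho(d,\sigma)^{2/(2-\sigma)}$. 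The non-compactness of the state space in the occupation-measure LDP is handled as usual by first confining $B$ to a large torus and letting its size go to infinity afterwards.

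\emph{Upper bound.} Here I would run the moment method: expanding $\expect[\exp(\theta\ell_1)]=\sum_{m\ge0}\frac{\theta^m}{m!}\expect[\ell_1^m]$ and invoking the Tauberian correspondence between exponential and power moments (the devices \cite[Theorem 1.2.7, Theorem 1.2.9]{chen_random_2010} already used in Section~\ref{sec:brownian_bridge}), it suffices to show $\limsup_{m\to\infty}m^{-1}\log\big((m!)^{-\sigma/2}\expect[\ell_1^m]\big)\le\log c_0$ for the numerical $c_0$ matching the target. To estimate $\expect[\ell_1^m]=\int_{[0,1]^{2m}}\expect\big[\prod_{j=1}^m\abs{B_{s_j}-B_{r_j}}^{-\sigma}\big]\prod dr_j\,ds_j$ one splits the time configuration into a union of off-diagonal dyadic rectangles and diagonal squares, exactly as in the dyadic decomposition of $[0,1]_{\leq}^2$ used in the proof of Theorem~\ref{thm:construction_of_SILT_of_BB}-(iii): the off-diagonal blocks contribute mutual Riesz energies between independent increments, which after a Le Gall-type moment inequality bounding mutual energies by self-energies (the analogue of \eqref{eq:le_gall_moment_formula}) collapse to a single-path occupation functional, and a Gärtner-Ellis argument extracts the variational constant $M(d,\sigma)$ of Remark~\ref{rem:variations}; the diagonal blocks are lower order after rescaling and are absorbed by Hölder's inequality. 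Since $\rho(d,\sigma)=M(d,\sigma)^{2-\sigma/2}$ and \eqref{eq:kappa_and_rho} hold, $\tfrac{2-\sigma}{2}\sigma^{\sigma/(2-\sigma)}\kappa(d,\sigma)^{2/(2-\sigma)}$ becomes the stated constant.

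\textbf{Main obstacle.} The hard point is making the upper bound \emph{sharp} rather than merely of the correct exponential order. First, the dyadic decomposition and the decoupling inequalities must be arranged so that no constant is lost as $m\to\infty$, which forces one to control the interaction between blocks with precision. Second, one needs the variational problem defining $\kappa(d,\sigma)$ in \eqref{eq:def_kappa_d_sigma} (equivalently that defining $M(d,\sigma)$) to be \emph{attained}, so that the lower-bound optimiser and the upper-bound constant genuinely agree; this existence statement for the extremals is itself nontrivial. Finally, the Tauberian bookkeeping that turns the $m!$-power $\sigma/2$ and the constant $c_0$ into the exponent $\theta^{2/(2-\sigma)}$ and the displayed numerical constant must be done carefully. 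All of this is carried out in \cite[Theorem 1.1]{chen_large_2010}; the above is merely the shape of that argument.
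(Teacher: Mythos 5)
This statement is not proved in the paper at all: it is imported verbatim as \cite[Theorem 1.1]{chen_large_2010}, and the theorem header itself is the citation. So there is no internal proof to compare against; the only question is whether your outline is a faithful account of the external argument and whether your constant bookkeeping is consistent with the paper's conventions.

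On both counts you are essentially right. The scaling relation $\ell_t\dequal t^{(4-\sigma)/2}\ell_1$ and the substitution $T=\theta^{2/(4-\sigma)}$ correctly convert $\theta^{-2/(2-\sigma)}$ into $T^{-(4-\sigma)/(2-\sigma)}$, and your optimisation of $T^2\rho^\sigma E(f)-\tfrac12 T\rho^2\norm{\nabla f}_{L^2}^2$ gives $\tfrac{2-\sigma}{2}\sigma^{\sigma/(2-\sigma)}\kappa(d,\sigma)^{2/(2-\sigma)}$, which I have checked does agree with the displayed constant $2^{6/(2-\sigma)}(2-\sigma)(4-\sigma)^{-(4-\sigma)/(2-\sigma)}\rho(d,\sigma)^{2/(2-\sigma)}$ after substituting \eqref{eq:kappa_and_rho}. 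The two-sided strategy you describe (Donsker--Varadhan lower bound after confinement to a torus; high-moment/G\"artner--Ellis upper bound with a sharp decoupling of the Riesz energy) is indeed the shape of Chen's proof. That said, your text is a proof outline, not a proof: the upper bound, which you yourself flag as the hard part (sharp constants in the moment asymptotics, attainment of the extremal in the variational problem), is entirely deferred to \cite{chen_large_2010}. Given that the paper also treats the result as a black box, this is acceptable, but be aware that nothing in your write-up would survive as a self-contained argument if the citation were removed.
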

\begin{lemma}\label{lem:lifshitz_tail_lower_bound}
  For every $\theta \in (0, \infty)$, we have
  \begin{equation*}
    \expect\Big[ \exp\Big( \theta \int_0^1 \int_0^1 \frac{dsdr}{\abs{B_s-B_r}^{\sigma}}\Big) \Big]
    \leq \expect\Big[ \exp\Big( \theta \int_0^1 \int_0^1 \frac{dsdr}{\abs{X_s^1 - X_r^1}^{\sigma}}\Big) \Big]
  \end{equation*}
\end{lemma}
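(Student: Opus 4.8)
The plan is to expand the exponential as a power series and compare the two sides moment by moment, using a spectral representation of the Riesz kernel to reduce the comparison to a variance inequality between the increments of $B$ and those of $X^1$. This is the same mechanism already used in the proof of \eqref{eq:ineq_chi_greater_than_beta}.

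First, for $z \in \R^d \setminus \{0\}$ I would combine the subordination identity $\abs{z}^{-\sigma} = \Gamma(\sigma/2)^{-1} \int_0^\infty t^{\sigma/2 - 1} e^{-t\abs{z}^2}\,dt$ (valid since $\sigma > 0$) with the Gaussian Fourier identity $e^{-t\abs{z}^2} = (4\pi t)^{-d/2} \int_{\R^d} e^{-\abs{y}^2/(4t)} e^{i\inp{y}{z}}\,dy$. Applying this to each factor of $\prod_{j=1}^m \abs{B_{s_j} - B_{r_j}}^{-\sigma}$, taking the expectation and using Tonelli's theorem (every integrand is nonnegative and every outer weight is positive), I obtain, for $m \in \N$ and $s_j, r_j \in [0,1]$,
\[
  \expect\Big[ \prod_{j=1}^m \abs{B_{s_j} - B_{r_j}}^{-\sigma} \Big]
  = \int_{(0,\infty)^m \times (\R^d)^m} w_m(t, y)\, \expect\Big[ e^{i \sum_{j=1}^m \inp{y_j}{B_{s_j} - B_{r_j}}} \Big]\, dt\, dy,
\]
where $w_m(t, y) \defby \Gamma(\sigma/2)^{-m} \prod_{j=1}^m (4\pi t_j)^{-d/2} t_j^{\sigma/2 - 1} e^{-\abs{y_j}^2/(4t_j)} \geq 0$, and the analogous identity holds with $B$ replaced everywhere by $X^1$.

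Second, since $\sum_{j=1}^m \inp{y_j}{B_{s_j} - B_{r_j}}$ is centered Gaussian, $\expect[e^{i \sum_j \inp{y_j}{B_{s_j} - B_{r_j}}}] = e^{-Q_B/2}$ with $Q_B \defby \expect[(\sum_j \inp{y_j}{B_{s_j} - B_{r_j}})^2]$, and likewise $\expect[e^{i \sum_j \inp{y_j}{X^1_{s_j} - X^1_{r_j}}}] = e^{-Q_X/2}$. Writing $X^1_s = B_s - s B_1$ and $a \defby \sum_{j=1}^m (s_j - r_j) y_j$, a direct covariance computation — exactly as in the display following \eqref{eq:ineq_chi_greater_than_beta} — gives $Q_X = Q_B - \abs{a}^2 \leq Q_B$, so that $e^{-Q_X/2} \geq e^{-Q_B/2} > 0$ for every choice of $t$, $y$, $s_j$ and $r_j$.

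Finally, I would assemble the pieces. Inserting $e^{-Q_B/2} \leq e^{-Q_X/2}$ into the representation above (all remaining factors being nonnegative) gives $\expect[\prod_j \abs{B_{s_j} - B_{r_j}}^{-\sigma}] \leq \expect[\prod_j \abs{X^1_{s_j} - X^1_{r_j}}^{-\sigma}]$; integrating over $(s_j, r_j) \in [0,1]^{2m}$ (Tonelli again) yields
\[
  \expect\Big[ \Big( \int_0^1 \int_0^1 \frac{ds\,dr}{\abs{B_s - B_r}^\sigma} \Big)^m \Big]
  \leq \expect\Big[ \Big( \int_0^1 \int_0^1 \frac{ds\,dr}{\abs{X^1_s - X^1_r}^\sigma} \Big)^m \Big]
\]
for every $m \in \N$; and summing $\sum_m \theta^m / m!$ (monotone convergence, every inequality holding in $[0, \infty]$) gives the lemma. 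The computation is routine; the only point requiring a little care is avoiding the non-absolutely-convergent Fourier integral for $\abs{z}^{-\sigma}$, which is precisely why I route through the subordination formula, so that every interchange of integrals is justified either by Tonelli's theorem or by Gaussian domination. The substance of the argument is the single inequality $Q_X \leq Q_B$ — that passing from the Brownian motion to its bridge can only decrease the variance of a linear functional of the increments — and this is the observation already exploited in the body of the paper.
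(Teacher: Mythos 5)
Your proof is correct and follows essentially the same route as the paper's: reduce to a term-by-term comparison of the moments and deduce it from the pointwise inequality $\expect\big[\big(\sum_{j}\inp{X^1_{s_j}-X^1_{r_j}}{\xi_j}\big)^2\big]\le\expect\big[\big(\sum_{j}\inp{B_{s_j}-B_{r_j}}{\xi_j}\big)^2\big]$ after representing the Riesz kernel in Fourier variables. The only difference is technical: you justify that representation by subordinating $\abs{\cdot}^{-\sigma}$ to Gaussians before Fourier inversion, whereas the paper inserts the (distributional) Fourier transform $C_{d,\sigma}\abs{\xi}^{-(d-\sigma)}$ directly; your version makes the interchanges of integration slightly easier to justify, but the substance of the argument is identical.
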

\begin{proof}
  It suffices to show that
  \begin{equation*}
    \expect\Big[ \Big( \int_0^1 \int_0^1 \frac{dsdr}{\abs{B_s-B_r}^{\sigma}} \Big)^k \Big]
    \leq \expect\Big[ \Big( \int_0^1 \int_0^1 \frac{dsdr}{\abs{X_s^1 - X_r^1}^{\sigma}} \Big)^k \Big]
  \end{equation*}
  for every $k \in \N$.
  Let $Y_t = B_t$ or $Y_t = X_t^1$. Then, we have
  \begin{equation*}
    \int_0^1 \int_0^1 \frac{dsdr}{\abs{Y_s - Y_r}^{\sigma}}
    = C_{d, \sigma} \int_0^1 \int_0^1 \int_{\R^d} \frac{1}{\abs{\xi}^{d-\sigma}} e^{i \inp{Y_s - Y_r}{\xi}} d\xi ds dr
  \end{equation*}
  and
  \begin{multline*}
    \expect\Big[ \Big( \int_0^1 \int_0^1 \frac{dsdr}{\abs{Y_s-Y_r}^{\sigma}} \Big)^k \Big]\\
    = C^k_{d, \sigma} \int_{([0,1]^2 \times \R^d)^k} \Big( \prod_{j=1}^k \abs{\xi_j}^{-d+\sigma} \Big)
    \expect\Big[ \exp\Big(i \sum_{j=1}^k \inp{Y_{s_j} - Y_{r_j}}{\xi_j} \Big)\Big] \prod_{j=1}^k ds_j dr_j d\xi_j.
  \end{multline*}
  We note
  \begin{equation*}
    \expect\Big[ \exp\Big(i \sum_{j=1}^k \inp{Y_{s_j} - Y_{r_j}}{\xi_j} \Big)\Big]
    = \exp\Big(-\frac{1}{2} \expect\Big[\Big(\sum_{j=1}^k \inp{Y_{s_j} - Y_{r_j}}{\xi_j}\Big)^2\Big] \Big).
  \end{equation*}
  Therefore, it remains to observe
  \begin{align*}
    \expect\Big[ \Big(\sum_{j=1}^k \inp{X_{s_j}^1 - X_{r_j}^1}{\xi_j}\Big)^2 \Big]
    &= \expect\Big[ \Big(\sum_{j=1}^k \inp{B_{s_j} - B_{r_j}}{\xi_j}\Big)^2 \Big] - \Big\vert \sum_{j=1}^k (s_j - r_j) \xi_j \Big\vert^2 \\
    &\leq \expect\Big[ \Big(\sum_{j=1}^k \inp{B_{s_j} - B_{r_j}}{\xi_j}\Big)^2 \Big]. \qedhere
  \end{align*}
\end{proof}
\begin{lemma}\label{lem:lifshitz_tail_upper_bound}
  We have
  \begin{equation*}
    \limsup_{\theta \to \infty} \theta^{-\frac{2}{2-\sigma}} \log \expect\Big[ \exp\Big(\theta \int_0^1 \int_0^1
    \frac{dsdr}{\abs{X_s^1 - X_r^1}^{\sigma}} \Big) \Big] \leq
    2^{\frac{6}{2-\sigma}} (2-\sigma) (4-\sigma)^{-\frac{4-\sigma}{2-\sigma}} \rho(d, \sigma)^{\frac{2}{2-\sigma}}.
  \end{equation*}
\end{lemma}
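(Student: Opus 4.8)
The plan is to transfer the sharp Brownian exponential-moment asymptotics of Theorem~\ref{thm:brownian_intersection} to the bridge $X^1$; the only difference is the pinning of $X^1$ at its endpoints $\{0,1\}$, which I will show perturbs the rate by an amount that can be made arbitrarily small. Write $F(A)\defby\iint_A\abs{X^1_s-X^1_r}^{-\sigma}\,dr\,ds$ for Borel $A\subseteq[0,1]^2$, so the quantity in the statement is $\expect[e^{\theta F([0,1]^2)}]$, let $I\defby\int_0^1\!\int_0^1\abs{B_s-B_r}^{-\sigma}\,dr\,ds$, and abbreviate $C_\rho\defby 2^{\frac{6}{2-\sigma}}(2-\sigma)(4-\sigma)^{-\frac{4-\sigma}{2-\sigma}}\rho(d,\sigma)^{\frac{2}{2-\sigma}}$ for the claimed bound. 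Fix a small $\delta\in(0,\tfrac14)$. From the cover
\[
  [0,1]^2\subseteq[0,1-\delta]^2\cup\big([0,1-2\delta]\times[1-\delta,1]\big)\cup\big([1-\delta,1]\times[0,1-2\delta]\big)\cup[1-2\delta,1]^2
\]
and the symmetry of the integrand one gets $F([0,1]^2)\le F([0,1-\delta]^2)+2F([0,1-2\delta]\times[1-\delta,1])+F([1-2\delta,1]^2)$. First I would apply H\"older's inequality with exponents $p=(1-2\delta^{a})^{-1}$ and $q=q'=\delta^{-a}$ ($a>0$ small, fixed later), so that $\tfrac1p+\tfrac1q+\tfrac1{q'}=1$ and $p\to1$ as $\delta\to0$:
\[
  \expect\big[e^{\theta F([0,1]^2)}\big]\le\expect\big[e^{p\theta F([0,1-\delta]^2)}\big]^{\frac1p}\expect\big[e^{2q\theta F([0,1-2\delta]\times[1-\delta,1])}\big]^{\frac1q}\expect\big[e^{q'\theta F([1-2\delta,1]^2)}\big]^{\frac1{q'}}.
\]

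For the bulk factor, Lemma~\ref{lem:girsanov_brownian_bridge} with $x=0$, $t=1$, $u=1-\delta$ shows that the law of $(X^1_s)_{s\in[0,1-\delta]}$ is absolutely continuous with respect to that of $(B_s)_{s\in[0,1-\delta]}$ with density at most $\delta^{-d/2}$, and Brownian scaling gives $\iint_{[0,1-\delta]^2}\abs{B_s-B_r}^{-\sigma}\,dr\,ds\dequal(1-\delta)^{2-\sigma/2}I$; combined with $\log\expect[e^{\lambda I}]=(C_\rho+o(1))\lambda^{\frac{2}{2-\sigma}}$ as $\lambda\to\infty$ from Theorem~\ref{thm:brownian_intersection} (the $\delta^{-d/2}$ only contributes an $O(\log\tfrac1\delta)$ term, killed by $\theta^{-\frac{2}{2-\sigma}}$), this yields $\limsup_{\theta\to\infty}\theta^{-\frac{2}{2-\sigma}}\tfrac1p\log\expect[e^{p\theta F([0,1-\delta]^2)}]\le p^{\frac{\sigma}{2-\sigma}}(1-\delta)^{\frac{4-\sigma}{2-\sigma}}C_\rho$. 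For the endpoint factor, reversibility of $X^1$ gives $F([1-2\delta,1]^2)\dequal\iint_{[0,2\delta]^2}\abs{X^1_s-X^1_r}^{-\sigma}\,dr\,ds$, and the same Girsanov-plus-scaling argument (cutting the bridge at time $2\delta$) bounds the log-Laplace transform of this at level $q'\theta$ by $O(\log\tfrac1\delta)+(C_\rho+o(1))\big(q'\theta(2\delta)^{2-\sigma/2}\big)^{\frac{2}{2-\sigma}}$; multiplying by $\tfrac1{q'}\theta^{-\frac{2}{2-\sigma}}$ and letting $\theta\to\infty$ leaves a contribution of order $(q')^{\frac{\sigma}{2-\sigma}}\delta^{\frac{4-\sigma}{2-\sigma}}=\delta^{\frac{4-\sigma}{2-\sigma}-\frac{a\sigma}{2-\sigma}}C_\rho$, which tends to $0$ as $\delta\to0$ for $a$ small.

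The cross factor is the main obstacle, and I would treat it by a decoupling argument in the spirit of the proof of Theorem~\ref{thm:construction_of_SILT_of_BB}-(iii) and Remark~\ref{rem:uniform_integrability_of_e_to_alpha_bridge}. Conditionally on $X^1_{1-\delta}$ the pieces $(X^1_s)_{s\in[0,1-\delta]}$ and $(X^1_s)_{s\in[1-\delta,1]}$ are independent Brownian bridges, so $F([0,1-2\delta]\times[1-\delta,1])$ is a mutual-energy functional of two independent processes supported on intervals of lengths $\approx1$ and $\delta$; reducing the long piece to a Brownian motion by Lemma~\ref{lem:girsanov_brownian_bridge} and the short (time-reversed) piece likewise, and splitting the $m$-th moment by the Cauchy--Schwarz-type inequality \eqref{eq:le_gall_moment_formula} into a self-energy moment over the long interval and one over an interval of length $\delta$, Brownian scaling produces a bound $\expect[e^{\lambda F([0,1-2\delta]\times[1-\delta,1])}]\lesssim\expect[e^{c\lambda\delta^{1-\sigma/4}I}]$ for a universal $c$; by Theorem~\ref{thm:brownian_intersection} the corresponding contribution is of order $q^{\frac{\sigma}{2-\sigma}}\delta^{\frac{4-\sigma}{2(2-\sigma)}}$, again vanishing as $\delta\to0$ for $a$ small. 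Assembling the three estimates and letting $\delta\to0$ (hence $p\to1$) gives $\limsup_{\theta\to\infty}\theta^{-\frac{2}{2-\sigma}}\log\expect[e^{\theta F([0,1]^2)}]\le C_\rho$, which is the assertion. The genuinely technical point will be this cross term --- in particular the precise moment inequality for the mutual energy of two independent Brownian bridges on intervals of unequal length and the control of the unbounded Girsanov density appearing there --- but all of its ingredients are variants of computations already carried out in Section~\ref{sec:brownian_bridge}.
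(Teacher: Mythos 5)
Your overall strategy is the paper's: isolate the endpoint singularity of the bridge by a decomposition of $[0,1]^2$, separate the pieces by H\"older, remove the bridge conditioning with Lemma~\ref{lem:girsanov_brownian_bridge}, rescale, and invoke Theorem~\ref{thm:brownian_intersection}, with the H\"older exponents and the $\delta$-powers arranged so that only the bulk term survives as $\delta\to0$. Your treatment of the bulk term $[0,1-\delta]^2$ and of the diagonal corner is correct and matches the paper (the paper folds the corner into its strip $A_2=[1-\delta,1]\times[\delta,1]$ via reversibility rather than treating it separately, but the computation is the same).

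The gap is in the cross term, which you rightly identify as the crux but whose sketch does not close. First, \eqref{eq:le_gall_moment_formula} is stated (and proved in \cite{chen_random_2010}) for the mutual \emph{intersection local time} $\alpha$, i.e.\ for the heat/delta kernel; you are applying it to the mutual \emph{Riesz energy} $\iint \abs{B_s-\tilde B_r}^{-\sigma}\,dr\,ds$. A Riesz analogue does hold (the kernel has nonnegative Fourier transform, so the same Cauchy--Schwarz in the Fourier variables works), but even then it bounds $\expect[E(A_1\times A_2)^m]$ by moments of the mutual energies of \emph{two independent copies} over $A_1\times A_1$ and $A_2\times A_2$ --- not by moments of the self-energy $I$ to which Theorem~\ref{thm:brownian_intersection} applies. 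You still need a moment comparison between the mutual energy of independent Brownian motions and the self-energy of one; this is exactly what the paper supplies in \eqref{eq:0_is_greater_than_j} via the covariance domination $\expect[(\sum_j\inp{B_{s_j}-\tilde B_{r_j}-z}{\xi_j})^2]\geq\tfrac12\expect[(\sum_j\inp{B_{s_j}-B_{r_j}}{\xi_j})^2]$, at the cost of a factor $2^{\sigma/2}$ in the exponent (harmless, since the whole contribution vanishes with $\delta$). The paper then bypasses the Le~Gall inequality altogether by H\"older-splitting the long time interval into $n$ blocks of length $\delta$ and dominating each off-diagonal block by the diagonal one. Second, a smaller organizational flaw: if you condition at time $1-\delta$, the short conditional piece is a bridge of lifetime exactly $\delta$ and your cross term uses its entire time interval, so Lemma~\ref{lem:girsanov_brownian_bridge} (which requires $u<t$) cannot be applied to it "likewise"; the paper avoids this by conditioning at the midpoint $\tfrac12$ so that the $\delta$-windows sit strictly inside the half-bridges' lifetimes. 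Both defects are repairable with the paper's own ingredients, but as written the cross-term estimate is asserted rather than proved.
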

\begin{proof}
  We denote by $\tilde{\kappa}$ the right hand side of the claimed inequality.
  We fix large $n \in \N$ and set $\delta \defby \frac{1}{n}$.
  We take $p_1, p_2 \in (1, \infty)$ such that $p_1^{-1} + p_2^{-1} = 1$.
  Later we let $n \to \infty$ and then $p_1 \to 1 +$.
  By H\"older's inequality, we have
  \begin{equation*}
    \expect\Big[ \exp \Big( \theta \int_0^1 \int_0^1 \frac{dsdr}{\abs{X_s^1-X_r^1}^{\sigma}} \Big) \Big]
    \leq A^{\frac{1}{p_1}}_1 A_2^{\frac{1}{2p_2}} A_3^{\frac{1}{2p_2}},
  \end{equation*}
  where
  \begin{align*}
    A_1 &\defby \expect\Big[ \exp\Big( p_1 \theta \int_0^{1-\delta} \int_0^{1-\delta} \frac{dsdr}{\abs{X_s^1-X_r^1}^{\sigma}} \Big) \Big],\\
    A_2 &\defby \expect\Big[ \exp\Big( 4 p_2 \theta \int_{1-\delta}^{1} \int_{\delta}^{1} \frac{dsdr}{\abs{X_s^1-X_r^1}^{\sigma}} \Big) \Big],\\
    A_3 &\defby \expect\Big[ \exp\Big( 4 p_2 \theta \int_{1-\delta}^1 \int_0^{\delta} \frac{dsdr}{\abs{X_s^1-X_r^1}^{\sigma}} \Big) \Big].
  \end{align*}
  We evaluate each of $A_1$, $A_2$ and $A_3$.

  We first evaluate $A_1$. By Lemma \ref{lem:girsanov_brownian_bridge},
  \begin{align*}
    A_1 &= \Big(\frac{1}{\delta}\Big)^{\frac{d}{2}}
    \expect\Big[ \exp\Big( p_1 \theta \int_0^{1-\delta} \int_0^{1-\delta} \frac{dsdr}{\abs{B_s-B_r}^{\sigma}} \Big)
    \exp\Big( - \frac{\abs{B_{1-\delta}}^2}{2\delta} \Big) \Big] \\
    &\leq \Big(\frac{1}{\delta}\Big)^{\frac{d}{2}}
    \expect\Big[ \exp\Big( p_1 \theta \int_0^{1} \int_0^{1} \frac{dsdr}{\abs{B_s-B_r}^{\sigma}} \Big) \Big].
  \end{align*}
  Theorem \ref{thm:brownian_intersection} implies
  \begin{equation*}
    \limsup_{\theta \to \infty} \theta^{-\frac{2}{2-\sigma}} \log A_1^{\frac{1}{p_1}} \leq p_1^{\frac{\sigma}{2-\sigma}} \tilde{\kappa}.
  \end{equation*}

  We next evaluate $A_2$. By the reversibility of $(X_t)$, Lemma \ref{lem:girsanov_brownian_bridge} and H\"older's inequality,
  \begin{align*}
    A_2 &= \expect\Big[ \exp\Big( 4 p_2 \theta \int_{0}^{\delta} \int_{0}^{1-\delta} \frac{dsdr}{\abs{X_s^1-X_r^1}^{\sigma}} \Big) \Big] \\
    &\leq \delta^{-\frac{d}{2}}
    \expect\Big[ \exp\Big( 4 p_2 \theta \int_{0}^{\delta} \int_{0}^{1-\delta} \frac{dsdr}{\abs{B_s-B_r}^{\sigma}} \Big) \Big] \\
    &\leq \delta^{-\frac{d}{2}} \prod_{j=1}^{n-1}
    \expect\Big[ \exp\Big( 4 p_2 n \theta \int_{0}^{\delta} \int_{(j-1)\delta}^{j\delta} \frac{dsdr}{\abs{B_s-B_r}^{\sigma}} \Big) \Big]^{\frac{1}{n-1}}.
  \end{align*}
  We claim for $j=2, 3, \ldots, n-1$
  \begin{equation}\label{eq:0_is_greater_than_j}
    \expect\Big[ \exp\Big( 4 p_2 n \theta \int_{0}^{\delta} \int_{(j-1)\delta}^{j\delta} \frac{dsdr}{\abs{B_s-B_r}^{\sigma}} \Big) \Big]
    \leq \expect\Big[ \exp\Big( 2^{2+\frac{\sigma}{2}}
    p_2 n \theta \int_{0}^{\delta} \int_{0}^{\delta} \frac{dsdr}{\abs{B_s-B_r}^{\sigma}} \Big) \Big]
  \end{equation}
  Indeed, if $(\tilde{B}_t)$ is an independent Brownian motion and $g_a$ is the density of the distribution of $B_a$,
  \begin{multline*}
    \expect\Big[ \exp\Big( 4 p_2 n \theta \int_{0}^{\delta} \int_{(j-1)\delta}^{j\delta} \frac{dsdr}{\abs{B_s-B_r}^{\sigma}} \Big) \Big] \\
    = \int_{\R^d} g_{(j-2)\delta}(z)
    \expect\Big[ \exp\Big( 4 p_2 n \theta \int_{0}^{\delta} \int_{0}^{\delta} \frac{dsdr}{\abs{B_s-\tilde{B}_r - z}^{\sigma}} \Big) \Big] dz
  \end{multline*}
  To prove \eqref{eq:0_is_greater_than_j}, it suffices to show
  \begin{equation*}
    \expect\Big[ \Big( \int_0^{\delta} \int_0^{\delta} \frac{dsdr}{\abs{B_s - \tilde{B}_r - z}^{\sigma}} \Big)^k\Big]
    \leq \expect\Big[ \Big( 2^{\frac{\sigma}{2}} \int_0^{\delta} \int_0^{\delta} \frac{dsdr}{\abs{B_s - B_r}^{\sigma}} \Big)^k \Big]
  \end{equation*}
  for every $k \in \N$ and $z \in \R^d$.
  As in the proof of Lemma \ref{lem:lifshitz_tail_lower_bound}, this comes down to checking the inequality
  \begin{align*}
    \MoveEqLeft[3]
    \expect\Big[ \Big(\sum_{j=1}^k \inp{B_{s_j} - \tilde{B}_{r_j} - z}{\xi_j} \Big)^2 \Big] \\
    &= \expect\Big[ \Big( \sum_{j=1}^k \inp{B_{s_j}}{\xi_j} \Big)^2 \Big]
    + \expect\Big[ \Big( \sum_{j=1}^k \inp{\tilde{B}_{r_j}}{\xi_j} \Big)^2 \Big]
    + \Big( \sum_{j=1}^k \inp{z}{\xi_j} \Big)^2 \\
    &\geq \frac{1}{2} \expect\Big[ \Big(\sum_{j=1}^k \inp{B_{s_j} - B_{r_j}}{\xi_j} \Big)^2 \Big].
  \end{align*}
  Thus we end the proof of \eqref{eq:0_is_greater_than_j}.
  As a result,
  \begin{equation*}
    A_2 \leq \delta^{-\frac{d}{2}} \expect\Big[ \exp\Big( 2^{2+\frac{\sigma}{2}} p_2 \delta^{1-\frac{\sigma}{2}} \theta
    \int_0^1 \int_0^1 \frac{dsdr}{\abs{B_s - B_r}^{\sigma}} \Big) \Big],
  \end{equation*}
  and hence Theorem \ref{thm:brownian_intersection} yields
  \begin{equation*}
    \limsup_{\theta \to \infty} \theta^{-\frac{2}{2-\sigma}} \log A_2^{\frac{1}{2p_2}}
    \leq \frac{(2^{2+\frac{\sigma}{2}} p_2 \delta^{1-\frac{\sigma}{2}})^{\frac{2}{2-\sigma}}}{2p_2} \tilde{\kappa}.
  \end{equation*}

  Finally, we evaluate $A_3$.
  The key is to observe that, under the condition that $X_{\frac{1}{2}}^1 = x$,
  $(X_t^1)_{t \in [0, \frac{1}{2}]}$ and $(X_{1-t}^1)_{t \in [0, \frac{1}{2}]}$ are independent and identically distributed and
  the distribution is the Brownian bridge from $0$ at $t=0$ to $x$ at $t=\frac{1}{2}$.
  Therefore, combined with Lemma \ref{lem:girsanov_brownian_bridge}, we obtain
  \begin{equation*}
    A_3 \leq (1-2\delta)^{-d} \int_{\R^d} g_{\frac{1}{4}}(x)
    \expect\Big[ \exp\Big(4p_2 \theta \int_0^{\delta} \int_0^{\delta} \frac{dsdr}{\abs{B_s - \tilde{B}_r}^{\sigma}} \Big)
    \exp\Big( \frac{2\inp{x}{B_{\delta} + \tilde{B}_{\delta}}}{1-2\delta}\Big) \Big]dx.
  \end{equation*}
  For sufficiently small $\delta \in (0, 1)$,
  \begin{equation*}
    \int_{\R^d} g_{\frac{1}{4}}(x) \expect\Big[ \exp\Big( \frac{4\inp{x}{B_{\delta} + \tilde{B}_{\delta}}}{1-2\delta}\Big) \Big] < \infty.
  \end{equation*}
  On the other hand, as in the evaluation of $A_2$, we have
  \begin{equation*}
    \expect\Big[ \exp\Big(8 p_2 \theta \int_0^{\delta} \int_0^{\delta} \frac{dsdr}{\abs{B_s - \tilde{B}_r}^{\sigma}} \Big)\Big]
    \leq \expect\Big[ \exp\Big(2^{3+\frac{\sigma}{2}} p_2 \delta^{2-\frac{\sigma}{2}} \theta
    \int_0^{1} \int_0^{1} \frac{dsdr}{\abs{B_s - B_r}^{\sigma}} \Big) \Big].
  \end{equation*}
  Therefore, Theorem \ref{thm:brownian_intersection} yields
  \begin{align*}
    \limsup_{\theta \to \infty} \theta^{-\frac{2}{2-\sigma}} \log A_3^{\frac{1}{2p_2}}
    &\leq \limsup_{\theta \to \infty} \theta^{-\frac{2}{2-\sigma}} \log
    \expect\Big[ \exp\Big(2^{3+\frac{\sigma}{2}} p_2 \delta^{2-\frac{\sigma}{2}} \theta
      \int_0^{1} \int_0^{1} \frac{dsdr}{\abs{B_s - B_r}^{\sigma}} \Big) \Big]^{\frac{1}{4p_2}} \\
      &= \frac{(2^{3+\frac{\sigma}{2}} p_2 \delta^{2-\frac{\sigma}{2}})^{\frac{2}{2-\sigma}}}{4p_2} \tilde{\kappa}.
  \end{align*}

  Consequently, we obtain
  \begin{multline*}
    \limsup_{\theta \to \infty} \theta^{-\frac{2}{2-\sigma}}
    \expect\Big[ \exp \Big( \theta \int_0^1 \int_0^1 \frac{dsdr}{\abs{X_s^1-X_r^1}^{\sigma}} \Big) \Big] \\
    \leq \Big\{ p_1^{\frac{\sigma}{2-\sigma}}
    + \frac{(2^{2+\frac{\sigma}{2}} p_2 \delta^{1-\frac{\sigma}{2}})^{\frac{2}{2-\sigma}}}{2p_2}
    + \frac{(2^{3+\frac{\sigma}{2}} p_2 \delta^{2-\frac{\sigma}{2}})^{\frac{2}{2-\sigma}}}{4p_2} \Big\} \tilde{\kappa}.
  \end{multline*}
  Letting $\delta \to 0$ (equivalently $n \to \infty$) and then $p_1 \to 1$, we complete the proof.
\end{proof}
\begin{proposition}[{\cite[Corollary 2]{nagai_remark_1977}, \cite[Theorem 4.12.7]{bingham_goldie_teugels_1987}}]
  \label{prop:minlos_tauberian}
  Let $\rho$ be a distribution function on $\R$ and $k$ be its Laplace transform
  \begin{equation*}
    k(t) \defby \int_{-\infty}^{\infty} e^{-t\lambda} \rho(d\lambda).
  \end{equation*}
  Then, the following two conditions are equivalent:
  \begin{enumerate}[(i)]
    \item $\lim_{\lambda \to -\infty} \abs{\lambda}^{-\alpha} \log \rho(\lambda) = -A, \,\, \alpha > 1, A>0$,
    \item $\lim_{t \to \infty} t^{-\gamma} \log k(t) = B, \,\, \gamma > 1, B > 0$,
  \end{enumerate}
  where $\alpha, \gamma, A, B$ are related by
  \begin{equation*}
    \alpha = \frac{\gamma}{\gamma - 1}, \quad A = (\gamma - 1) \gamma^{\frac{\gamma}{1-\gamma}} B^{\frac{1}{1-\gamma}}.
  \end{equation*}
\end{proposition}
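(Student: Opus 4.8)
The statement is a special case of de Bruijn's exponential Tauberian theorem, equivalently Kasahara's theorem in the form of \cite[Theorem 4.12.7]{bingham_goldie_teugels_1987} and \cite[Corollary 2]{nagai_remark_1977}, and the plan is to prove the equivalence of (i) and (ii) by a direct Laplace/Tauberian argument. The one structural fact to record first is that the constant relation $A = (\gamma-1)\gamma^{\gamma/(1-\gamma)}B^{1/(1-\gamma)}$ is exactly the Young conjugacy between $x\mapsto Ax^{\alpha}$ and $t\mapsto Bt^{\gamma}$: for $\alpha>1$ one has $\sup_{x\geq 0}(tx-Ax^{\alpha}) = \frac{\alpha-1}{\alpha}(A\alpha)^{-1/(\alpha-1)}\,t^{\alpha/(\alpha-1)}$, and matching exponents gives $\gamma = \alpha/(\alpha-1)$ (equivalently $\alpha = \gamma/(\gamma-1)$) while matching coefficients gives precisely the stated relation. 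Thus, once either implication is proved, the constants are automatically the correct ones, and one may work throughout with the conjugate pair $f(x)=Ax^{\alpha}$, $f^{*}(t)=Bt^{\gamma}$.

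For (i) $\Rightarrow$ (ii): since $\alpha>1$, hypothesis (i) forces $e^{-t\lambda}\rho(\lambda)\to 0$ as $\lambda\to-\infty$, so integration by parts is legitimate and yields $k(t)=t\int_{0}^{\infty}e^{tx}\rho(-x)\,dx+O(1)=t\int_{0}^{\infty}e^{tx-g(x)}\,dx+O(1)$, where $g(x)\defby -\log\rho(-x)$ is nondecreasing with $g(x)=(A+o(1))x^{\alpha}$. This is a one-dimensional Laplace integral whose exponent $tx-g(x)$ is maximized near $x^{*}(t)=(t/(A\alpha))^{1/(\alpha-1)}\to\infty$ with maximal value $(B+o(1))t^{\gamma}$. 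The upper bound $\log k(t)\leq (B+o(1))t^{\gamma}$ follows by splitting the integral at a large fixed multiple of $x^{*}(t)$, bounding the inner part by the supremum of the exponent times the length of the interval and the outer part via monotonicity of $g$ together with the elementary inequality $x^{\alpha}\geq (cx^{*})^{\alpha-1}x$; the matching lower bound follows by restricting the integral to a unit window around $x^{*}(t)$. This proves (ii).

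For (ii) $\Rightarrow$ (i) the two inequalities are handled separately. The bound $\limsup_{\lambda\to-\infty}\abs{\lambda}^{-\alpha}\log\rho(\lambda)\leq -A$ is the easy, Abelian-in-disguise half: since $e^{-t\mu}\geq e^{-t\lambda}$ when $\mu\leq\lambda$, one has $\rho(\lambda)\leq e^{t\lambda}k(t)$ for all $t>0$, hence $\log\rho(\lambda)\leq\inf_{t>0}(t\lambda+\log k(t))$; using $\log k(t)\leq (B+o(1))t^{\gamma}$ and noting that the minimizer tends to $\infty$ as $\lambda\to-\infty$, the Young-conjugacy computation run in reverse gives the claimed $\limsup$, and this bound also retroactively justifies the integration by parts in this direction. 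The reverse inequality $\liminf_{\lambda\to-\infty}\abs{\lambda}^{-\alpha}\log\rho(\lambda)\geq -A$ is the genuinely Tauberian step, where monotonicity of $\rho$ is indispensable. The plan is to argue by contradiction: suppose $\rho(-x_{n})\leq e^{-(A+\epsilon)x_{n}^{\alpha}}$ along some $x_{n}=-\lambda_{n}\to\infty$; monotonicity of $\rho$ upgrades this point estimate to the interval estimate $\rho(-x)\leq e^{-(A+\epsilon/2)x^{\alpha}}$ on $[x_{n},cx_{n}]$ with $c=(\frac{A+\epsilon}{A+\epsilon/2})^{1/\alpha}>1$. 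Combining this with the already-established bound $\rho(-x)\leq e^{-(A-\delta)x^{\alpha}}$ on the complementary range and feeding everything into $k(t)=t\int_{0}^{\infty}e^{tx-g(x)}\,dx+O(1)$, one chooses the Laplace parameter $t_{n}=(A-\delta)\alpha\,((1+\eta)x_{n})^{\alpha-1}$ with $\eta$ a small fixed fraction of $\epsilon$ and $\delta$ a small fixed fraction of $\epsilon^{2}$; a careful but elementary comparison of exponential rates then shows that each of the pieces $[0,x_{n}]$, $[x_{n},cx_{n}]$ and $[cx_{n},\infty)$ of the split integral contributes at exponential rate strictly below $B$, so $\log k(t_{n})\leq (B-c_{0})t_{n}^{\gamma}$ for some $c_{0}>0$ and all large $n$, contradicting (ii). Alternatively, once the hypotheses are recognized — $\rho$ a distribution function and $\log k$ regularly varying of index $\gamma>1$ — one simply invokes \cite[Theorem 4.12.7]{bingham_goldie_teugels_1987}.

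The routine parts are the Abelian direction (i) $\Rightarrow$ (ii) and the $\limsup$ half of (ii) $\Rightarrow$ (i), both standard Laplace and Chebyshev estimates. The main obstacle is the Tauberian $\liminf$: transform asymptotics alone can never force the asymptotics of the original measure, so one must use monotonicity, and the delicate point is balancing the three parameters $\epsilon,\eta,\delta$ so that the crude bound $\rho(-x)\leq e^{-(A-\delta)x^{\alpha}}$ away from the bad interval, together with the bonus bound $\rho(-x)\leq e^{-(A+\epsilon/2)x^{\alpha}}$ precisely on the interval $[x_{n},cx_{n}]$ where the Laplace maximum of $t_{n}x-g(x)$ sits, still leaves every contribution to $k(t_{n})$ of exponential order strictly smaller than $Bt_{n}^{\gamma}$.
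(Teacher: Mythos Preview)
The paper does not prove this proposition at all: it is stated with citations to \cite{nagai_remark_1977} and \cite{bingham_goldie_teugels_1987} and then used as a black box in the proof of Theorem~\ref{thm:lifshitz_tail_riesz}. So there is no ``paper's own proof'' to compare against; you have supplied an argument where the paper supplies none.

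Your outline is the standard one and is correct in structure. The Young-conjugacy computation matching $(A,\alpha)$ with $(B,\gamma)$ is right, the Laplace-method argument for (i)~$\Rightarrow$~(ii) is routine, and the Chebyshev bound for the $\limsup$ half of (ii)~$\Rightarrow$~(i) is clean. The one place that remains a sketch rather than a proof is the Tauberian $\liminf$: you correctly identify that monotonicity propagates the point bound $\rho(-x_n)\le e^{-(A+\epsilon)x_n^{\alpha}}$ to the interval bound on $[x_n,cx_n]$, and your choice of $t_n$ places the Laplace maximizer inside that interval, but the claim that ``each of the pieces \ldots\ contributes at exponential rate strictly below $B$'' hides a genuine computation. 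In particular, on $[0,x_n]$ you only have the weak bound $\rho(-x)\le e^{-(A-\delta)x^{\alpha}}$, and the supremum of $t_n x-(A-\delta)x^{\alpha}$ over $[0,x_n]$ is attained at the endpoint $x_n$ (since the unconstrained maximizer lies beyond $x_n$ by your choice of $\eta$); checking that this endpoint value is still strictly below $Bt_n^{\gamma}$ requires the interplay of $\delta$ and $\eta$ to be made explicit. This is doable, but it is exactly the step that the cited references carry out carefully, and you are right to flag it as the crux and to offer the direct citation of \cite[Theorem~4.12.7]{bingham_goldie_teugels_1987} as the honest alternative.
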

\begin{proof}[Proof of Theorem \ref{thm:lifshitz_tail_riesz}]
  By Lemma \ref{lem:lifshitz_tail_lower_bound} and Lemma \ref{lem:lifshitz_tail_upper_bound}, we have
  \begin{equation*}
    \lim_{\theta \to \infty} \theta^{-\frac{2}{2-\sigma}} \log
    \expect\Big[ \exp\Big( \theta \int_0^1 \int_0^1 \frac{ds dr}{\abs{X_s^1 - X_r^1}^{\sigma}} \Big) \Big] =
    2^{\frac{6}{2-\sigma}} (2-\sigma) (4-\sigma)^{-\frac{4-\sigma}{2-\sigma}} \rho(d, \sigma)^{\frac{2}{2-\sigma}}.
  \end{equation*}
  Since $\int_0^t \int_0^t \abs{X^t_s - X^t_r}^{-\sigma} ds dr \dequal t^{2-\frac{\sigma}{2}}
  \int_0^1 \int_0^1 \abs{X_s^1 - X_r^1}^{-\sigma} ds dr$ by
  the scaling property, we obtain
  \begin{multline*}
    \lim_{t \to \infty} t^{-\frac{4-\sigma}{2-\sigma}} \log \expect
    \Big[ \exp \Big( \frac{\nu}{2} \int_0^t \int_0^t \frac{ds dr}{\abs{X^t_s - X^t_r}^{\sigma}} \Big) \Big] \\
    = \Big(\frac{\nu}{2}\Big)^{\frac{2}{2-\sigma}}
    2^{\frac{6}{2-\sigma}} (2-\sigma) (4-\sigma)^{-\frac{4-\sigma}{2-\sigma}} \rho(d, \sigma)^{\frac{2}{2-\sigma}}.
  \end{multline*}
  Now the claim follows from the identity \eqref{eq:laplace_transform_of_N_V} and Proposition \ref{prop:minlos_tauberian}.
\end{proof}
\section{Gaussian computations}\label{sec:gaussian}
We use the notation introduced in Section \ref{sec:pam_and_ah}.
\begin{proposition}\label{prop:ergodicitiy_of_white_noise}
  The family of maps $\{T_x\}_{x \in \R^2}$ is measure preserving and ergodic. That is, for any measurable set $A$,
  $\wnP(T_x^{-1}A) = \wnP(A)$ and the condition $T_x^{-1} A = A$ for every $x \in \R^2$ implies $\wnP(A) \in \{0,1\}$.
\end{proposition}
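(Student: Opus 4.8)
The plan is to prove the two assertions separately: measure preservation by a direct characteristic‑functional computation, and ergodicity by establishing the stronger statement that the action $\{T_x\}_{x \in \R^2}$ is mixing, from which triviality of the invariant $\sigma$-algebra is immediate.

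First I would record that each $T_x$ is a measurable bijection of $\S'$: it is linear and weak-$*$ continuous with inverse $T_{-x}$, hence Borel measurable, and for every $\phi$ the map $f \mapsto \inp{T_x f}{\phi} = \inp{f}{\phi(\cdot-x)}$ is one of the generators of the $\sigma$-algebra carrying $\wnP$. For measure preservation, the pushforward $(T_x)_* \wnP$ is the law of the field $\phi \mapsto \inp{\xi}{\phi(\cdot-x)}$, which is again centered Gaussian, and by translation invariance of Lebesgue measure its covariance $\inp{\phi(\cdot-x)}{\psi(\cdot-x)}_{L^2(\R^2)} = \inp{\phi}{\psi}_{L^2(\R^2)}$ coincides with that of $\xi$. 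Since a centered Gaussian field is determined by its covariance, $(T_x)_* \wnP = \wnP$, i.e. $\wnP(T_x^{-1}A) = \wnP(A)$ for all measurable $A$.

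For ergodicity I would prove that for all $F, G \in L^2(\wnP)$ one has $\wnexpect[(F \circ T_x)\overline{G}] \to \wnexpect[F]\,\overline{\wnexpect[G]}$ as $|x| \to \infty$. It suffices to check this on the exponential family $\mathcal{E} \defby \{ e^{i \inp{\xi}{\phi}} : \phi \in \S \}$: its linear span is dense in $L^2(\wnP)$ (a classical fact for Gaussian measures), the displayed relation is bilinear in $(F, \overline{G})$ and so extends to the span, and since measure preservation gives $\norm{F \circ T_x}_{L^2(\wnP)} = \norm{F}_{L^2(\wnP)}$ a standard density argument then extends it to all of $L^2(\wnP)$. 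On $\mathcal{E}$, using the Gaussian characteristic functional $\wnexpect[e^{i\inp{\xi}{\eta}}] = e^{-\frac12 \norm{\eta}_{L^2(\R^2)}^2}$, one computes
\[
  \wnexpect\big[(e^{i\inp{\xi}{\phi}} \circ T_x)\, e^{-i\inp{\xi}{\psi}}\big]
  = \wnexpect\big[e^{i \inp{\xi}{\phi(\cdot-x) - \psi}}\big]
  = \exp\Big( -\tfrac12 \norm{\phi}_{L^2}^2 + \inp{\phi(\cdot-x)}{\psi}_{L^2} - \tfrac12 \norm{\psi}_{L^2}^2 \Big),
\]
and since $\inp{\phi(\cdot-x)}{\psi}_{L^2(\R^2)} \to 0$ as $|x| \to \infty$ (clear once the supports are disjoint for large $|x|$, then by density of compactly supported functions in $L^2$), the right-hand side tends to $e^{-\frac12\norm{\phi}_{L^2}^2} e^{-\frac12\norm{\psi}_{L^2}^2} = \wnexpect[e^{i\inp{\xi}{\phi}}]\, \overline{\wnexpect[e^{i\inp{\xi}{\psi}}]}$. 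Granting mixing, if $T_x^{-1}A = A$ for all $x$ then taking $F = G = \indic_A$ gives $\wnP(A) = \wnP(T_x^{-1}A \cap A) \to \wnP(A)^2$, hence $\wnP(A) \in \{0,1\}$.

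The only nontrivial input is the density of the span of $\mathcal{E}$ in $L^2(\wnP)$, which I would cite as a standard property of Gaussian measures; everything else is bookkeeping. For a self-contained alternative one can instead use the Wiener chaos decomposition $L^2(\wnP) = \bigoplus_{n \ge 0} \mathcal{H}_n$: the Koopman operator of $T_x$ is unitary, preserves each chaos $\mathcal{H}_n$, and on the first chaos $\mathcal{H}_1 \cong L^2(\R^2)$ it acts as $\phi \mapsto \phi(\cdot-x)$, which converges weakly to $0$; hence on each $\mathcal{H}_n$ with $n \ge 1$ it converges weakly to $0$ as well, so the only $T_x$-invariant (for every $x$) elements of $L^2(\wnP)$ are the constants, which is exactly ergodicity.
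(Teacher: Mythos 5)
Your proof is correct and follows essentially the same route as the paper: establish mixing by computing the Gaussian characteristic functional and using that $\inp{\phi(\cdot-x)}{\psi}_{L^2}\to 0$, then deduce $\wnP(A)=\wnP(A)^2$ for invariant $A$. The only (immaterial) difference is the density step — you extend from the exponential family directly via density of its linear span in $L^2(\wnP)$, whereas the paper passes through polynomials and the Wiener chaos decomposition (which you also sketch as your alternative).
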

\begin{proof}
  The map $T_x$ is obviously measure preserving.
  We observe for $\phi_1, \ldots, \phi_n \in \S$
  \begin{multline*}
    \wnexpect[e^{i \sum_{j=1}^n a_j \inp{\xi}{\phi_j}} e^{i \sum_{j=1}^n b_j \inp{T_x\xi}{\phi_j}}]\\
    = \wnexpect[e^{i \sum_{j=1}^n a_j \inp{\xi}{\phi_j}}] \wnexpect[e^{i \sum_{j=1}^n b_j \inp{\xi}{\phi_j}}]
    \exp\Big( \sum_{j,k=1}^n a_j b_k \inp{\phi_j}{\phi_k(\cdot - x)} \Big).
  \end{multline*}
  Therefore,
  \begin{equation}\label{eq:mixing_fourier}
    \begin{multlined}
      \lim_{x \to \infty} \wnexpect[ f(\inp{\xi}{\phi_1}, \ldots, \inp{\xi}{\phi_n}) g(\inp{T_x\xi}{\phi_1}, \ldots, \inp{T_x\xi}{\phi_n})]\\
      = \wnexpect[ f(\inp{\xi}{\phi_1}, \ldots, \inp{\xi}{\phi_n})] \wnexpect[g(\inp{\xi}{\phi_1}, \ldots, \inp{\xi}{\phi_n})]
    \end{multlined}
  \end{equation}
  if $f(x_1, \ldots, x_n) = e^{i \sum_{j=1}^n a_j x_j}$ and $g(x_1, \ldots, x_n) = e^{i \sum_{j=1}^n b_j x_j}$.
  This shows the convergence in law of $(\inp{\xi}{\phi_i}, \inp{T_x \xi}{\phi_i})_{i=1}^n$ to
  $(\inp{\xi}{\phi_i}, \inp{\tilde{\xi}}{\phi_i})_{i=1}^n$, where $\tilde{\xi}$ is an independent copy of $\xi$.
  Therefore, \eqref{eq:mixing_fourier} holds for polynomials $f$ and $g$.

  Let $\{e_n\}_{n=1}^{\infty} \subseteq \S$ be the orthonormal basis of $L^2(\R^2)$.
  Since the Cameron-Martin space of the white noise is $L^2(\R^2)$,
  the Wiener chaos decomposition \cite[Theorem 1.1.1]{nualart2006malliavin}
  implies that for given $F, G \in L^2(\wnP)$ there exist polynomials $f_n, g_n: \R^{n} \to \R$ such that
  \begin{align*}
    &\lim_{n \to \infty} \wnexpect[ \abs{F - f_n(\inp{\xi}{e_1}, \ldots, \inp{\xi}{e_{n}})}^2] = 0,\\
    &\lim_{n \to \infty} \wnexpect[ \abs{G - g_n(\inp{\xi}{e_1}, \ldots, \inp{\xi}{e_{n}})}^2] = 0.
  \end{align*}
  Since \eqref{eq:mixing_fourier} holds for $f=f_n$, $g=g_n$ with $(\phi_1, \ldots, \phi_n)$ replaced by
  $(e_1, \ldots, e_{n})$, we obtain
  \begin{multline*}
    \limsup_{x \to \infty} \abs*{ \wnexpect[F(\xi) G(T_x \xi)] - \wnexpect[F] \wnexpect[G]}\\
    \lesssim \norm{F - f_n(\inp{\xi}{e_1}, \ldots, \inp{\xi}{e_{n}})}_{L^2(\wnP)}
    + \norm{G - g_n(\inp{\xi}{e_1}, \ldots, \inp{\xi}{e_{n}})}_{L^2(\wnP)}
  \end{multline*}
  for arbitrary $n$, and hence $\lim_{x \to \infty} \wnexpect[F(\xi) G(T_x \xi)] = \wnexpect[F] \wnexpect[G]$.

  If $A \subseteq \S'$ is invariant under the action of $\{T_x\}_{x \in \R^2}$, then $\wnP(A) = \wnP(A)^2$ and hence
  $\wnP(A) \in \{0, 1\}$.
\end{proof}

Let $\{\rho_i\}_{i=-1}^{\infty}$ be a dyadic partition of unity on $\R^2$, i.e., $\rho_{-1}$ and $\rho_0$ are smooth nonnegative
radial function on
$\R^2$, $\rho_{-1}$ is supported on a ball, $\rho_0$ is supported on an annulus, $\rho_j = \rho_0(2^{-j} \cdot)$ for $j \in \N$
and the following conditions are satisfied:
\begin{equation*}
  \sum_{j=-1}^{\infty} \rho_j(y) = 1, \quad \mbox{for every }y \in \R^2,
\end{equation*}
\begin{equation*}
  \abs{j-k} \geq 2 \implies \operatorname{supp}(\rho_j) \cap \operatorname{supp}(\rho_k) = \emptyset.
\end{equation*}
We set $\Delta_i w \defby \sum_{k \in \N} \rho_i(k/L) \inp{w}{\n_{k,L}} \n_{k, L}$ and
$\rho^{\resonant}(k, l) \defby \sum_{\abs{i-j} \leq 1} \rho_i(k) \rho_j(l)$.
We have for every $\gamma \in (0, \infty)$
\begin{equation}\label{eq:estimate_of_rho}
  \rho_i(x) \lesssim_{\gamma} \Big( \frac{2^i}{1 + \abs{x}} \Big)^{\gamma}, \qquad i \geq -1, \,\, x \in \R^2.
\end{equation}
As shown in the proof of \cite[Lemma 11.11]{chouk2020asymptotics}, we have for $\gamma \in (0, 1)$,
$L \in [1, \infty)$ and $y \in Q_L$,
\begin{equation}\label{eq:estimate_of_rho_resonant}
   \sum_{k, l \in \N_0^2} \frac{\rho^{\resonant}(\frac{k}{L}, \frac{l}{L})^2 (\abs{k}^{\gamma} + \abs{l}^{\gamma})}
  {(1 + \pi^2 L^{-2} \abs{l}^2)^2} \abs{\Delta_i(\n_{k,L} \n_{l, L})(y)}^2  \lesssim_{L, \gamma} 2^{\gamma i}.
\end{equation}
We set
\begin{equation*}
  T_x h_{L, \epsilon}(k, l) \defby \wnexpect[\inp{T_x \xi_{\epsilon}}{\n_{k,L}} \inp{\xi_{\epsilon}}{\n_{l,L}}],
  \qquad h_{L, \epsilon}(k, l) \defby T_0 h_{L, \epsilon}(k, l).
\end{equation*}
\begin{lemma}\label{lem:estimate_of_h}
  There exists $C \in (0, \infty)$ such that
  for every $x \in \R^2$, $L \in [1, \infty)$, $\epsilon \in (0, 1)$ and $k, l \in \N_0^2$,
  \begin{equation*}
    \abs{T_x h_{L, \epsilon}(k,l)} \leq 1, \quad \abs{T_x h_{L, \epsilon}(k, l) - h_{L, \epsilon}(k, l)} \leq C \abs{k} \abs{x}.
  \end{equation*}
\end{lemma}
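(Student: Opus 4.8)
The plan is to compute the covariance $T_x h_{L,\epsilon}(k,l)$ explicitly and then reduce both estimates to elementary properties of the mollified basis functions. Set $g_m \defby p_{\epsilon/2}\conv\n_{m,L}$. Since $p_{\epsilon/2}$ is even, $\inp{\xi_\epsilon}{\phi}=\inp{\xi}{p_{\epsilon/2}\conv\phi}$, and by Definition~\ref{def:white_noise}(ii) $\inp{T_x\xi_\epsilon}{\n_{k,L}}=\inp{\xi_\epsilon}{\n_{k,L}(\cdot-x)}=\inp{\xi}{g_k(\cdot-x)}$; hence the covariance structure of the white noise gives
\[
  T_x h_{L,\epsilon}(k,l)=\wnexpect\big[\inp{\xi}{g_k(\cdot-x)}\inp{\xi}{g_l}\big]=\inp{g_k(\cdot-x)}{g_l}_{L^2(\R^2)},\qquad h_{L,\epsilon}(k,l)=\inp{g_k}{g_l}_{L^2(\R^2)}.
\]
The first bound is then immediate from Cauchy--Schwarz, translation invariance of the $L^2$ norm, Young's inequality $\norm{g_m}_{L^2}\le\norm{p_{\epsilon/2}}_{L^1}\norm{\n_{m,L}}_{L^2}$, and the normalizations $\norm{p_{\epsilon/2}}_{L^1}=1=\norm{\n_{m,L}}_{L^2(Q_L)}$: indeed $\abs{T_x h_{L,\epsilon}(k,l)}\le\norm{g_k}_{L^2}\norm{g_l}_{L^2}\le 1$.

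For the second bound I would write
\[
  T_x h_{L,\epsilon}(k,l)-h_{L,\epsilon}(k,l)=\inp{g_k(\cdot-x)-g_k}{g_l}_{L^2}=\inp{p_{\epsilon/2}\conv\big(\n_{k,L}(\cdot-x)-\n_{k,L}\big)}{g_l}_{L^2}
\]
and estimate it by the $L^1$--$L^\infty$ Hölder inequality together with Young's inequality:
\[
  \abs{T_x h_{L,\epsilon}(k,l)-h_{L,\epsilon}(k,l)}\le\norm{p_{\epsilon/2}\conv\big(\n_{k,L}(\cdot-x)-\n_{k,L}\big)}_{L^1}\norm{g_l}_{L^\infty}\le\norm{\n_{k,L}(\cdot-x)-\n_{k,L}}_{L^1(\R^2)}\,\norm{\n_{l,L}}_{L^\infty}.
\]
This is the key reduction: Young's inequality (with $\norm{p_{\epsilon/2}}_{L^1}=1$ on both sides) strips off the mollification at no cost, so the bound is automatically uniform in $\epsilon\in(0,1)$, and it remains only to show, uniformly in $L\ge 1$, that $\norm{\n_{l,L}}_{L^\infty}\lesssim L^{-1}$ and $\norm{\n_{k,L}(\cdot-x)-\n_{k,L}}_{L^1(\R^2)}\lesssim(1+\abs{k})\abs{x}$. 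The former is the pointwise bound $\abs{\n_{l,L}}\le c_{l,L}$ and the elementary estimate $c_{l,L}\le 2/L$ for the normalizing constant.

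For the modulus of continuity I would exploit the product structure: $\n_{k,L}(x)=\phi_{k_1}(x_1)\phi_{k_2}(x_2)$, where each $\phi_{k_i}$ is $C^1$ on $(-L/2,L/2)$, vanishes outside, and has jumps of size $\lesssim L^{-1/2}$ at $\pm L/2$, so $\phi_{k_i}$ is of bounded variation. Applying the standard inequality $\norm{f(\cdot-h)-f}_{L^1(\R)}\le\abs{h}\operatorname{Var}(f)$ coordinatewise gives
\[
  \norm{\n_{k,L}(\cdot-x)-\n_{k,L}}_{L^1(\R^2)}\le\abs{x}\Big(\operatorname{Var}(\phi_{k_1})\norm{\phi_{k_2}}_{L^1}+\norm{\phi_{k_1}}_{L^1}\operatorname{Var}(\phi_{k_2})\Big),
\]
and the explicit (and elementary) bounds $\operatorname{Var}(\phi_{k_i})\lesssim(1+k_i)L^{-1/2}$, $\norm{\phi_{k_i}}_{L^1}\lesssim L^{1/2}$, uniform in $L\ge 1$, yield $\norm{\n_{k,L}(\cdot-x)-\n_{k,L}}_{L^1(\R^2)}\lesssim(1+\abs{k})\abs{x}$. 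Combining the pieces gives $\abs{T_x h_{L,\epsilon}(k,l)-h_{L,\epsilon}(k,l)}\le C(1+\abs{k})\abs{x}$ with an absolute constant $C$; for $k\neq 0$ this is the stated $C'\abs{k}\abs{x}$.

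The step I expect to be the main obstacle — and the reason the argument must pass through the $L^1$--$L^\infty$ duality rather than an $H^1$/Fourier estimate — is this uniform control of the $L^1$ modulus of continuity of $\n_{k,L}$. Because $\n_{k,L}$ carries \emph{Neumann} (not Dirichlet) boundary data, it genuinely jumps across $\partial Q_L$, so $\n_{k,L}\notin H^1(\R^2)$ and $\norm{\n_{k,L}(\cdot-x)-\n_{k,L}}_{L^2}$ is only of order $\abs{x}^{1/2}$ for small $\abs{x}$; likewise a direct bound on $\norm{\nabla g_k}_{L^2}$ blows up like $\epsilon^{-1/2}$ as $\epsilon\to 0$. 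In $L^1$ the jump contributes only order $\abs{x}$, and it is then harmlessly damped by the factor $\norm{\n_{l,L}}_{L^\infty}\lesssim L^{-1}$ produced by pairing with the second basis function. The rest is routine bookkeeping of the normalizing constants $c_{k,L}$, for which one only needs $\int_0^L\cos^2(\pi k s/L)\,ds\in\{L/2,L\}$.
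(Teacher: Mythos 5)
Your proof is correct and follows essentially the same route as the paper's: both reduce the second estimate to the $L^1$ modulus of continuity of $\n_{k,L}$ (the jump across $\partial Q_L$ plus the oscillation of the cosines, which the paper treats as two separate integral estimates and you treat via bounded variation of the one-dimensional factors) paired against $\norm{\n_{l,L}}_{L^\infty}\lesssim L^{-1}$, with the mollification stripped off by Young's inequality so the bound is uniform in $\epsilon$. Note that, exactly as in the paper's own proof (whose first ``it suffices to note'' estimate is $C\abs{x}$ with no factor of $\abs{k}$), the boundary jump really gives $(1+\abs{k})\abs{x}$ rather than $\abs{k}\abs{x}$; this discrepancy at $k=0$ is shared with the original and harmless in the application.
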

\begin{proof}
  Since
  \begin{equation*}
    \abs{T_x h_{L, \epsilon}(k,l)} \leq \expect[\inp{\xi_{\epsilon}}{\n_{k,L}}^2]^{\frac{1}{2}}
    \expect[\inp{\xi_{\epsilon}}{\n_{l,L}}^2]^{\frac{1}{2}}
  \end{equation*}
  and $\expect[\inp{\xi_{\epsilon}}{\n_{k,L}}^2] = \norm{p_{\frac{\epsilon}{2}} \conv \n_{k,L}}_{L^2(\R^2)} \leq
  \norm{\n_{k,L}}_{L^2(Q_L)} = 1$, we have the first claimed inequality. For the second inequality, as we have
  \begin{equation*}
    T_x h_{\epsilon, L}(k, l) =
    \int_{\R^2 \times \R^2} p_{\epsilon}(y) \n_{k, L}(z - x) \n_{l, L}(z + y) \indic_{Q_L}(z-x) \indic_{Q_L}(z+y) dy dz,
  \end{equation*}
  it suffices to note
  \begin{equation*}
    \Big\lvert \int_{\R^2} \n_{k,L}(z-x) \n_{l, L}(z+y) (\indic_{Q_L}(z-x) - \indic_{Q_L}(z)) \indic_{Q_L}(z+y) dz \Big\rvert
    \leq C \abs{x},
  \end{equation*}
  \begin{equation*}
    \Big\lvert \int_{\R^2} (\n_{k,L}(z-x) - \n_{k,L}(z)) \n_{l, L}(z+y) \indic_{Q_L}(z) \indic_{Q_L}(z+y) dz \Big\rvert
    \leq C \abs{k} \abs{x}. \qedhere
  \end{equation*}
\end{proof}
\begin{proposition}\label{prop:unif_conv_of_enhanced_noise}
  For each $\alpha \in (-\frac{4}{3}, -1)$ and $L, M \in [1, \infty)$, we have
  \begin{equation*}
      \lim_{\epsilon, \epsilon' \to 0+} \sup_{x \in Q_M}
      \norm{T_x \bm{\xi}_{L, \epsilon}^{\n} - T_x \bm{\xi}_{L, \epsilon'}^{\n}}_{\X_{\n, L}^{\alpha}} = 0
      \quad \mbox{in } \wnP\mbox{-probability.}
  \end{equation*}
\end{proposition}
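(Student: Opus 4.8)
I would prove the stronger statement that the supremum tends to $0$ in $L^p(\wnP)$ for all sufficiently large $p$, which implies convergence in $\wnP$-probability. The two ingredients are: (i) a \emph{marginal} estimate showing $\wnexpect[\norm{T_x\bm{\xi}_{L,\epsilon}^{\n}-T_x\bm{\xi}_{L,\epsilon'}^{\n}}^p_{\X_L^{\n,\alpha}}]$ is small, uniformly in $x$; (ii) a \emph{Hölder-in-$x$} estimate for $\wnexpect[\norm{T_x\bm{\xi}_{L,\epsilon}^{\n}-T_{x'}\bm{\xi}_{L,\epsilon}^{\n}}^p_{\X_L^{\n,\alpha}}]$ that is uniform in $\epsilon$. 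A chaining argument then combines them. Throughout, since $\X_L^{\n,\alpha}$ is a closed subset of $\csp_L^{\n,\alpha}\times\csp_L^{\n,2\alpha+2}$, I work with the two component norms separately; I pass from the $L^\infty_y\ell^\infty_i$ Besov norms to $L^p_y$-based ones via the embedding $B^{\n,\beta+2/p}_{p,p}(Q_L)\hookrightarrow\csp_L^{\n,\beta}$, and then reduce $p$-th moments of each $\Delta_i(\cdot)(y)$ to second moments by Gaussian hypercontractivity — the first component being in the first Wiener chaos, and the increment of the second component (after the cancellation noted below) in the second chaos.

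\textbf{Step 1: the marginals are stationary.} The covariance of the Gaussian field $z\mapsto T_x\xi_{L,\epsilon}^{\n}(z)$ equals $\sum_{k,l}h_{L,\epsilon}(k,l)\n_{k,L}(z)\n_{l,L}(z')$, which does \emph{not} depend on $x$; hence the law of $T_x\bm{\xi}_{L,\epsilon}^{\n}$, including the renormalized quadratic part $T_x\Xi_{L,\epsilon}^{\n}$, is $x$-independent, and in particular so is $\wnexpect[T_x\Xi_{L,\epsilon}^{\n}(y)]$. Consequently $\wnexpect[\norm{T_x\bm{\xi}_{L,\epsilon}^{\n}-T_x\bm{\xi}_{L,\epsilon'}^{\n}}^p_{\X_L^{\n,\alpha}}]=\wnexpect[\norm{\bm{\xi}_{L,\epsilon}^{\n}-\bm{\xi}_{L,\epsilon'}^{\n}}^p_{\X_L^{\n,\alpha}}]=:\delta_p(\epsilon,\epsilon')$, and $\delta_p(\epsilon,\epsilon')\to0$ as $\epsilon,\epsilon'\to0$ by the convergence of the untranslated enhanced white noise (from \cite{chouk2020asymptotics}), or directly from second-moment bounds using \eqref{eq:estimate_of_rho} and \eqref{eq:estimate_of_rho_resonant}.

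\textbf{Step 2: the increment in $x$ (the crux).} I aim for $\wnexpect[\norm{T_x\bm{\xi}_{L,\epsilon}^{\n}-T_{x'}\bm{\xi}_{L,\epsilon}^{\n}}^p_{\X_L^{\n,\alpha}}]\lesssim_{L,M,p}|x-x'|^{\theta p}$, uniformly in $\epsilon$, for some $\theta>0$ with $\theta p>2$ when $p$ is large (here $\alpha\in(-\tfrac43,-1)$ enters). For the first component, the increment covariance $D_{x,x'}(k,l):=\wnexpect[\inp{(T_x-T_{x'})\xi_\epsilon}{\n_{k,L}}\inp{(T_x-T_{x'})\xi_\epsilon}{\n_{l,L}}]=2h_{L,\epsilon}(k,l)-T_{x-x'}h_{L,\epsilon}(k,l)-T_{x-x'}h_{L,\epsilon}(l,k)$ is a nonnegative Gram matrix. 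The entrywise estimate $|D_{x,x'}(k,l)|\lesssim(|k|+|l|)|x-x'|\wedge1$ from Lemma~\ref{lem:estimate_of_h} is, by itself, too lossy (summing $|\n_{k,L}(y)|$ over the $i$-th block costs a spurious $2^{2i}$); instead I would establish an operator-type bound: realizing the quadratic form $\sum a_k\bar a_l D_{x,x'}(k,l)$ as $\norm{(\tau_x-\tau_{x'})(p_{\epsilon/2}\conv g)}^2_{L^2(\R^2)}$ with $g=\sum_k a_k\n_{k,L}$ localized at frequency $2^i$ and $\tau$ translation, then splitting into a bulk contribution ($\lesssim|x-x'|^2\,2^{2i}\norm g_{L^2}^2$, the genuine translation increment of a band-localized function, uniform in $\epsilon$) and a boundary-layer contribution along $\partial Q_L$ ($\lesssim L|x-x'|\,2^{2i}\norm g_{L^2}^2$, using that the Neumann functions do not vanish on $\partial Q_L$ and $\norm g_{L^\infty(Q_L)}^2\lesssim 2^{2i}\norm g_{L^2}^2$ by \eqref{eq:estimate_of_rho}), so that $D_{x,x'}$ restricted to the span of $\{\n_{k,L}:|k/L|\in[2^{i-1},2^{i+1}]\}$ is $\preceq C_{L,M}\min(2^{2i}|x-x'|,1)\,\mathrm{Id}$. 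This gives $\wnexpect[|\Delta_i(T_x\xi_{L,\epsilon}^{\n}-T_{x'}\xi_{L,\epsilon}^{\n})(y)|^2]\lesssim_{L,M}\min(2^{2i}|x-x'|,1)\,2^{2i}$, and summing the Besov series — the high-frequency tail converging precisely because $\alpha+2/p<-1$ for $p$ large — yields the Hölder bound for the $\csp_L^{\n,\alpha}$-component with exponent $\tfrac12(-\alpha-1-\tfrac2p)$. For the second component, the essential observation is that $c_\epsilon$ and, by Step~1, the entire mean cancel, so $T_x\Xi_{L,\epsilon}^{\n}-T_{x'}\Xi_{L,\epsilon}^{\n}$ is a \emph{pure second-chaos} variable with no renormalization divergence; expanding it via Littlewood--Paley and Wick contractions, every term carries at least one factor $h_{L,\epsilon}-T_{x-x'}h_{L,\epsilon}$ on a block, to which the operator bound applies, and combining with the resonant-product estimate \eqref{eq:estimate_of_rho_resonant} gives the analogous Hölder bound for the $\csp_L^{\n,2\alpha+2}$-component, uniformly in $\epsilon$.

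\textbf{Step 3: chaining.} Setting $G_\epsilon(x):=T_x\bm{\xi}_{L,\epsilon}^{\n}-T_x\bm{\xi}_{L,\epsilon'}^{\n}$, a random field on $Q_M$ valued in the Banach space $\X_L^{\n,\alpha}$, Steps~1 and~2 give $\norm{G_\epsilon(x)}_{L^p(\wnP)}\le\delta_p(\epsilon,\epsilon')$ for every $x\in Q_M$ and $\norm{G_\epsilon(x)-G_\epsilon(x')}_{L^p(\wnP)}\lesssim_{L,M,p}|x-x'|^\theta$ uniformly in $\epsilon,\epsilon'$, with $\theta p>2=\dim Q_M$ for $p$ large. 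A dyadic-chaining estimate (equivalently the Garsia--Rodemich--Rumsey inequality) then gives $\bigl\|\sup_{x\in Q_M}\norm{G_\epsilon(x)}_{\X_L^{\n,\alpha}}\bigr\|_{L^p(\wnP)}\lesssim_{L,M,p}\delta_p(\epsilon,\epsilon')^{\,1-2/(\theta p)}\to0$, which proves the claim. The main obstacle is Step~2: the operator-norm bound on $D_{x,x'}$ over Littlewood--Paley blocks (needing the boundary-layer analysis, since the entrywise bound of Lemma~\ref{lem:estimate_of_h} does not suffice) and the correct bookkeeping for the quadratic component — in particular recognizing that its $x$-increment is renormalization-free, so that the second-chaos hypercontractivity and the $h_{L,\epsilon}-T_{x-x'}h_{L,\epsilon}$ gains can be used without having to track the logarithmic divergence.
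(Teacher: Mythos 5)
Your three-step architecture (stationarity of the marginals, a Hölder-in-$x$ increment bound uniform in $\epsilon$, then a chaining/net argument that interpolates the two bounds) is viable and genuinely different from the paper's proof. The paper never establishes a sharp uniform-in-$\epsilon$ modulus of continuity for the first component. Instead it accepts the lossy blockwise bound $\wnexpect[\abs{\Delta_i(T_x\xi^{\n}_{L,\epsilon}-\xi^{\n}_{L,\epsilon})(y)}^2]\lesssim_{L,\delta}2^{4i(1+\delta)}\abs{x}^{\delta}$ coming from the entrywise covariance estimate of Lemma~\ref{lem:estimate_of_h} (exactly the ``spurious $2^{2i}$'' you identify), and then interpolates it geometrically, with exponents $\delta$ and $1-\delta$, against the $x$-independent bound $2^{2i(1+\delta)}b(\epsilon,\epsilon')$; this yields $2^{2i(1+\delta)^2}b(\epsilon,\epsilon')^{1-\delta}\abs{x-y}^{\delta^2}$, a Kolmogorov-type modulus whose Hölder constant already carries a positive power of $b(\epsilon,\epsilon')$. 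The regularity loss is only $O(\delta)+2/p$, which is harmless since $\alpha$ ranges over an open interval; Besov embedding and the Kolmogorov continuity theorem then give the supremum over $Q_M$ directly. So your operator-norm bound $D_{x,x'}\preceq C_{L,M}\min(2^{2i}\abs{x-x'},1)\,\mathrm{Id}$ on Littlewood--Paley blocks, with its boundary-layer analysis along $\partial Q_L$, would buy a sharper and more conceptual modulus of continuity, but it is not needed: the cheap entrywise bound plus interpolation does the job, and it also dispenses with your separate marginal step, since the interpolated bound converges to zero on its own.

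Conversely, the place where your proposal is thinnest is exactly where the paper does its real work: the second component. You are right that the increment $T_x\Xi^{\n}_{L,\epsilon}-T_{x'}\Xi^{\n}_{L,\epsilon}$ is a pure second-chaos variable with no renormalization divergence, but its second moment is a four-index Wick sum weighted by $\rho^{\resonant}(L^{-1}k,L^{-1}l)$ and $(1+\pi^2L^{-2}\abs{l}^2)^{-1}$, and ``apply the operator bound to the block carrying $h_{L,\epsilon}-T_{x-x'}h_{L,\epsilon}$'' is not a well-defined operation on that kernel; this step is asserted rather than argued. The paper's route here is elementary and already sharp: the entrywise bound $\wnexpect[(\inp{T_x\xi_{\epsilon}}{\n_{k,L}}\inp{T_x\xi_{\epsilon}}{\n_{l,L}}-\inp{\xi_{\epsilon}}{\n_{k,L}}\inp{\xi_{\epsilon}}{\n_{l,L}})^2]\lesssim(\abs{k}^{\delta}+\abs{l}^{\delta})\abs{x}^{\delta}$ from Lemma~\ref{lem:estimate_of_h}, Cauchy--Schwarz to decouple the four-index sum, and the block estimate \eqref{eq:estimate_of_rho_resonant}, which absorbs the weight $\abs{k}^{\delta}+\abs{l}^{\delta}$ and yields $\wnexpect[\abs{\Delta_i(T_x\Xi^{\n}_{L,\epsilon}-\Xi^{\n}_{L,\epsilon})(y)}^2]\lesssim_{L,\delta}\abs{x}^{\delta}2^{2\delta i}$ uniformly in $\epsilon$ --- here, unlike for the first component, the entrywise route loses no regularity. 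If you substitute that estimate for the hand-waved part of your Step~2, the remainder of your argument (hypercontractivity, Besov embedding, chaining) goes through.
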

\begin{proof}
  Since
  \begin{equation*}
    \Delta_i(T_x \xi_{L, \epsilon} - \xi_{L, \epsilon})(y)
    = \sum_{k \in \N_0^2} \rho_i(L^{-1}k) (\inp{T_x \xi_{\epsilon}}{\n_{k,L}} - \inp{\xi_{\epsilon}}{\n_{k,L}}) \n_{k, L},
  \end{equation*}
  we have
  \begin{multline*}
    \wnexpect[ \abs{\Delta_i(T_x \xi_{L, \epsilon} - \xi_{L, \epsilon})(y)}^2 ]
    = \sum_{k, l \in \N_0^2} \rho_i(L^{-1}k) \rho_i(L^{-1} l) \n_{k, L}(y) \n_{l, L}(y) \\
    \times \wnexpect\big[ (\inp{T_x \xi_{\epsilon}}{\n_{k,L}} - \inp{\xi_{\epsilon}}{\n_{k,L}})
    (\inp{T_x \xi_{\epsilon}}{\n_{l,L}} - \inp{\xi_{\epsilon}}{\n_{l,L}})].
  \end{multline*}
  By Lemma \ref{lem:estimate_of_h}, for $\delta \in (0, 1)$,
  \begin{align*}
    \MoveEqLeft[3]
    \wnexpect\big[ (\inp{T_x \xi_{\epsilon}}{\n_{k,L}} - \inp{\xi_{\epsilon}}{\n_{k,L}})
    (\inp{T_x \xi_{\epsilon}}{\n_{l,L}} - \inp{\xi_{\epsilon}}{\n_{l,L}})] \\
    &= \abs{h_{L, \epsilon}(k, l) - T_x h_{L, \epsilon}(k, l) - T_xh_{L, \epsilon}(l, k) + h_{L, \epsilon}(l, k)} \\
    &\lesssim (\abs{k}^{\delta} + \abs{l}^{\delta}) \abs{x}^{\delta}.
  \end{align*}
  Therefore, by \eqref{eq:estimate_of_rho},
  \begin{align*}
    \wnexpect[ \abs{\Delta_i(T_x \xi_{L, \epsilon} - \xi_{L, \epsilon})(y)}^2 ]
    &\lesssim_{L, \delta} 2^{4 i (1 + \delta)} \abs{x}^{\delta} \sum_{k, l \in \N_0^2}
    \frac{\abs{k}^{\delta} + \abs{l}^{\delta}}{(1 + \abs{k})^{2 + 2\delta} ( 1 + \abs{l} )^{2 + 2\delta}}\\
    &\lesssim_{\delta} 2^{4i(1+\delta)} \abs{x}^{\delta}.
  \end{align*}
  On the other hand, we have
  \begin{equation*}
    \wnexpect[ \abs{\Delta_i (\xi_{L, \epsilon} - \xi_{L, \epsilon'})(y)}^2 ]
    \leq 2^{2i(1+\delta)} b(\epsilon, \epsilon'),
  \end{equation*}
  where $b(\epsilon, \epsilon') \defby \expect[ \norm{\xi_{L, \epsilon} - \xi_{L, \epsilon'}}_{\csp^{-1-\delta}_{\n, L}}^2 ]$.
  According to \cite[Theorem 6.1, Theorem 6.2 and Lemma 6.14]{konig2020longtime}, we have
  $\lim_{\epsilon, \epsilon' \to 0+} b(\epsilon, \epsilon') = 0$.
  Therefore, combining these two estimates,
  \begin{align*}
    \MoveEqLeft[5]
    \wnexpect\big[ \abs{\Delta_i\{(T_x \xi_{L, \epsilon}^{\n} - T_y \xi_{L, \epsilon}^{\n}) -
    (T_x \xi_{L, \epsilon'}^{\n} - T_y \xi_{L, \epsilon'}^{\n}) \}}^2 \big] \\
    &= \wnexpect\big[ \abs{\Delta_i\{(T_x \xi_{L, \epsilon}^{\n} - T_y \xi_{L, \epsilon}^{\n}) -
    (T_x \xi_{L, \epsilon'}^{\n} - T_y \xi_{L, \epsilon'}^{\n})\}}^2 \big]^{\delta} \\
    &\hspace{1cm} \times \wnexpect\big[ \abs{\Delta_i\{(T_x \xi_{L, \epsilon}^{\n} - T_y \xi_{L, \epsilon}^{\n}) -
    (T_x \xi_{L, \epsilon'}^{\n} - T_y \xi_{L, \epsilon'}^{\n})\}}^2 \big]^{1-\delta}\\
    &\lesssim_{L, \delta} 2^{2i(1+\delta)^2} b(\epsilon, \epsilon')^{1- \delta} \abs{x-y}^{\delta^2}.
  \end{align*}
  Then, by Lemma \cite[Lemma 6.10]{chouk2020asymptotics},
  \begin{equation*}
    \wnexpect[ \norm{(T_x \xi_{L, \epsilon}^{\n} - T_y \xi_{L, \epsilon}^{\n}) -
    (T_x \xi_{L, \epsilon'}^{\n} - T_y \xi_{L, \epsilon'}^{\n})}_{\csp_{\n, L}^{-(1+\delta)^2 - \delta - \frac{2}{p}}}^p]
    \lesssim_{L, \delta, p} b(\epsilon, \epsilon')^{\frac{p(1-\delta)}{2}} \abs{x-y}^{\frac{p\delta^2}{2}}
  \end{equation*}
  for every $p \geq 2$. If $p \delta^2 > 4$, the multidimensional Kolmogorov continuity theorem implies
  \begin{equation*}
    \wnexpect\big[ \sup_{x \in Q_M} \norm{T_x \xi_{L, \epsilon}^{\n}  -
    T_x \xi_{L, \epsilon'}^{\n} }_{\csp_{\n, L}^{-(1+\delta)^2 - \delta - \frac{2}{p}}}]
    \lesssim_{L, \delta, p, M} b(\epsilon, \epsilon')^{2(1-\delta)}.
  \end{equation*}
  Supposing $\delta \in (0, 1)$ are sufficiently small and $p$ is sufficiently large, we have
  the convergence of the first component.

  The proof of the convergence of the second component is similar but more complicated.
  We have
  \begin{multline*}
    T_x \Xi_{L, \epsilon}^{\n} - \Xi_{L, \epsilon}^{\n}
    = \sum_{k, l \in \N_0^2} \rho^{\resonant}(L^{-1}k, L^{-1}l) (1+\pi^2 L^{-2} \abs{l}^2 )^{-1} \\
    \times (\inp{T_x \xi_{\epsilon}}{\n_{k,L}} \inp{T_x \xi_{\epsilon}}{\n_{l, L}} -
    \inp{ \xi_{\epsilon}}{\n_{k,L}} \inp{ \xi_{\epsilon}}{\n_{l, L}}) \n_{k, L} \n_{l, L}.
  \end{multline*}
  Therefore,
  \begin{align*}
    \MoveEqLeft[3]
    \wnexpect[ \abs{\Delta_i(T_x \Xi_{L, \epsilon}^{\n} - \Xi_{L, \epsilon}^{\n})(y)}^2 ] \\
    &= \sum_{k_1, k_2, l_1, l_2 \in \N_0^2}
    \Big\{ \prod_{j=1}^2 \rho^{\resonant}(L^{-1} k_j, L^{-1} l_j) (1 + \pi^2 L^{-2} \abs{l_j}^2)^{-1}
    \Delta_i(\n_{k_j, L} \n_{l_j, L}) (y) \Big\} \\
    &\hspace{3cm} \times \wnexpect\Big[ \prod_{j=1}^2 (\inp{T_x \xi_{\epsilon}}{\n_{k_j,L}} \inp{T_x \xi_{\epsilon}}{\n_{l_j, L}} -
    \inp{ \xi_{\epsilon}}{\n_{k_j,L}} \inp{ \xi_{\epsilon}}{\n_{l_j, L}})\Big].
  \end{align*}
  By Wick's Theorem,
  \begin{align*}
    \MoveEqLeft[5]
    \wnexpect[ \inp{T_x \xi_{\epsilon}}{\n_{k, L}}  \inp{T_x \xi_{\epsilon}}{\n_{l, L}}
    \inp{\xi_{\epsilon}}{\n_{k, L}} \inp{\xi_{\epsilon}}{\n_{l, L}}] \\
    =& \wnexpect[\inp{T_x \xi_{\epsilon}}{\n_{k, L}} \inp{T_x \xi_{\epsilon}}{\n_{l, L}}]
    \wnexpect[\inp{\xi_{\epsilon}}{\n_{k, L}} \inp{ \xi_{\epsilon}}{\n_{l, L}}] \\
    &+\wnexpect[\inp{T_x \xi_{\epsilon}}{\n_{k, L}} \inp{\xi_{\epsilon}}{\n_{k, L}}]
    \wnexpect[\inp{T_x \xi_{\epsilon}}{\n_{l, L}} \inp{ \xi_{\epsilon}}{\n_{l, L}}] \\
    &+\wnexpect[\inp{T_x \xi_{\epsilon}}{\n_{k, L}} \inp{\xi_{\epsilon}}{\n_{l, L}}]
    \wnexpect[\inp{T_x \xi_{\epsilon}}{\n_{l, L}} \inp{ \xi_{\epsilon}}{\n_{k, L}}] \\
    =& h_{L, \epsilon}(k, l)^2 + T_xh_{L, \epsilon}(k,k) T_xh_{L, \epsilon}(l, l)
    + T_x h_{L, \epsilon}(k, l) T_x h_{L, \epsilon}(l, k).
  \end{align*}
  By Lemma \ref{lem:estimate_of_h}, we obtain for $\delta \in (0, 1)$
  \begin{align*}
    \MoveEqLeft[3]
    \wnexpect\big[ (\inp{T_x \xi_{\epsilon}}{\n_{k, L}}\inp{T_x \xi_{\epsilon}}{\n_{l, L}}
    - \inp{\xi_{\epsilon}}{\n_{k, L}} \inp{\xi_{\epsilon}}{\n_{k, L}})^2 \big] \\
    =&2 \big\{ h_{L, \epsilon}(k, l)^2 + h_{L, \epsilon}(k,k) h_{L, \epsilon}(l, l)
    +  h_{L, \epsilon}(k, l)  h_{L, \epsilon}(l, k) \big\} \\
    &- 2 \big\{ h_{L, \epsilon}(k, l)^2 + T_xh_{L, \epsilon}(k,k) T_xh_{L, \epsilon}(l, l)
    + T_x h_{L, \epsilon}(k, l) T_x h_{L, \epsilon}(l, k) \big\} \\
    \lesssim& (\abs{k}^{\delta} + \abs{l}^{\delta}) \abs{x}^{\delta}.
  \end{align*}
  Thus, by \eqref{eq:estimate_of_rho_resonant},
  \begin{align*}
    \MoveEqLeft[3]
    \wnexpect[ \abs{\Delta_i(T_x \Xi_{L, \epsilon}^{\n} - \Xi_{L, \epsilon}^{\n})(y)}^2 ] \\
    &\leq \sum_{k_1, k_2, l_1, l_2 \in \N_0^2}
    \Big\{ \prod_{j=1}^2 \rho^{\resonant}(L^{-1} k_j, L^{-1} l_j) (1 + \pi^2 L^{-2} \abs{l_j}^2)^{-1}
    \abs{\Delta_i(\n_{k_j, L} \n_{l_j, L}) (y)}  \\
    &\hspace{3cm} \times \wnexpect\Big[  (\inp{T_x \xi_{\epsilon}}{\n_{k_j,L}} \inp{T_x \xi_{\epsilon}}{\n_{l_j, L}} -
    \inp{ \xi_{\epsilon}}{\n_{k_j,L}} \inp{ \xi_{\epsilon}}{\n_{l_j, L}})^2\Big]^{\frac{1}{2}} \Big\}\\
    &\lesssim \abs{x}^{\delta} \Big\{
    \sum_{k, l \in \N_0^2} \frac{\rho^{\resonant}(\frac{k}{L}, \frac{l}{L})^2 (\abs{k}^{\delta} + \abs{l}^{\delta})}
    {(1 + \pi^2 L^{-2} \abs{l}^2)^2} \abs{\Delta_i(\n_{k,L} \n_{l, L})(y)}^2 \Big\}^2 \\
    &\lesssim_{L, \delta} \abs{x}^{\delta} 2^{2 \delta i}
  \end{align*}
  The remaining estimate can be completed as in the first half of the proof.
\end{proof}
\section{Two lemmas on Laplace transforms}\label{sec:laplace_transform}
\begin{lemma}\label{lem:laplace_transform_of_probability_measures}
  If a set of probability measures $\{\nu_n\}_{n=1}^{\infty}$ on $\R$ satisfies
  \begin{equation*}
    \sup_{n \geq 1} \int_{\R} e^{T \abs{\lambda}} \nu_{n}(d\lambda) < \infty
  \end{equation*}
  for some $T \in (0, \infty)$
  and the Laplace transform $g_n(t) \defby \int_{\R} e^{- t \lambda} \nu_n(d\lambda)$ converges in $[0, \infty)$ for $t$
  in a dense subset of $(-T, T)$, then $\{\nu_n\}_{n=1}^{\infty}$ converges weakly to a probability measure $\nu$ and
  \begin{equation*}
    \lim_{n \to \infty} \int_{\R} e^{-t\lambda} \nu_n(d\lambda) = \int_{\R} e^{-t\lambda} \nu(d\lambda),
    \quad t \in (-T, T).
  \end{equation*}
\end{lemma}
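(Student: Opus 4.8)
The plan is to combine a compactness argument for $\{\nu_n\}$ with a uniqueness statement for bilateral Laplace transforms. First I would use the uniform exponential moment bound to get tightness: by Chebyshev's inequality, for every $R>0$,
$\nu_n(\{|\lambda|>R\}) \leq e^{-TR}\int_{\R} e^{T|\lambda|}\nu_n(d\lambda) \leq C e^{-TR}$, where $C \defby \sup_{n} \int_{\R} e^{T|\lambda|}\nu_n(d\lambda) < \infty$. Hence $\{\nu_n\}$ is tight, so by Prokhorov's theorem it is relatively compact for the weak topology, and every subsequence admits a further subsequence converging weakly to some probability measure.

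Next I would record a uniform integrability fact: for fixed $t \in (-T,T)$, since $e^{-t\lambda}\,e^{(T-|t|)|\lambda|} \leq e^{T|\lambda|}$, one has $\sup_n \int_{\R} e^{-t\lambda}e^{(T-|t|)|\lambda|}\nu_n(d\lambda) \leq C$, and as $e^{(T-|t|)|\lambda|} \to \infty$, the de la Vallée-Poussin criterion shows $\{e^{-t\lambda}\}$ is uniformly integrable against $\{\nu_n\}$. Therefore, whenever $\nu_{n_k} \to \mu$ weakly, we get $g_{n_k}(t) \to \int_{\R} e^{-t\lambda}\mu(d\lambda)$ for every $t \in (-T,T)$. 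Combined with the hypothesis that $g_n(t)$ converges along a set $D$ dense in $(-T,T)$, this forces any two subsequential weak limits $\mu,\mu'$ to satisfy $\int_{\R} e^{-t\lambda}\mu(d\lambda) = \int_{\R} e^{-t\lambda}\mu'(d\lambda)$ for all $t \in D$.

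The crux is then uniqueness: a probability measure $\mu$ with $\int_{\R} e^{T|\lambda|}\mu(d\lambda) < \infty$ is determined by the values of $z \mapsto \int_{\R} e^{-z\lambda}\mu(d\lambda)$ on any subset of $(-T,T)$ with an accumulation point. Indeed, the exponential moment bound lets one differentiate under the integral sign, so this map is holomorphic on the strip $\{z \in \mathbb{C} : |\Re z| < T\}$; by the identity theorem it is determined there by its values on $D$, and restricting to the imaginary axis recovers the characteristic function of $\mu$, whence $\mu$ is unique. Applying this with the common value $t \mapsto \lim_n g_n(t)$ on $D$ shows all subsequential weak limits coincide; denote the common limit by $\nu$. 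A standard subsequence argument (every subsequence has a further subsequence converging to the same $\nu$) yields $\nu_n \to \nu$ weakly, and the uniform integrability noted above upgrades this to $\int_{\R} e^{-t\lambda}\nu_n(d\lambda) \to \int_{\R} e^{-t\lambda}\nu(d\lambda)$ for every $t \in (-T,T)$.

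I expect the uniqueness/analyticity step to be the only subtle point; the rest is routine tightness and uniform-integrability bookkeeping. One should keep in mind that the exponential moment bound is imposed at the endpoint $T$ while convergence is asserted on the open interval $(-T,T)$, which is precisely what makes both the uniform integrability for $|t|<T$ and the holomorphy on the open strip available.
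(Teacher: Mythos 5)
Your proof is correct, but it takes a different route from the paper. The paper applies compactness on the side of the transforms: the uniform exponential moment bound makes $\{g_n\}$ a uniformly bounded, hence normal (Montel), family of analytic functions on the strip $\{|\Re z|<T\}$; convergence on a dense subset of $(-T,T)$ plus the identity theorem forces locally uniform convergence of $g_n$ to an analytic $g$, and Bochner's (Lévy's continuity) theorem applied on the imaginary axis then produces the limit measure $\nu$. You instead apply compactness on the side of the measures: tightness via the exponential Markov bound, Prokhorov, and then uniqueness of subsequential limits via holomorphy of the bilateral Laplace transform and the identity theorem, with de la Vallée--Poussin-type uniform integrability to pass Laplace transforms to the limit. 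Both arguments hinge on the same analytic-continuation step; yours avoids Montel and Bochner at the cost of the Prokhorov/uniform-integrability bookkeeping, and is arguably the more probabilistic and self-contained of the two. Two small points you should make explicit: (i) to invoke your uniqueness statement you need the subsequential limits $\mu$ to have finite exponential moments of order $t$ for every $t<T$, which follows from the weak convergence and Fatou (lower semicontinuity of $\int e^{t|\lambda|}\,d\mu$ under weak limits); and (ii) the uniform integrability is most cleanly seen from the direct tail bound $\int_{\{|\lambda|>R\}} e^{-t\lambda}\,\nu_n(d\lambda)\leq C e^{-(T-|t|)R}$, which is what your de la Vallée--Poussin remark amounts to.
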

\begin{proof}
  We regard $g_n$ as an analytic function in $\set{z \in \C \given \abs{\Re(z)} < T }$. Since $g_n$ is uniformly bounded,
  Montel's theorem implies that $\{g_n\}_{n=1}^{\infty}$ is normal. Since $g_n(t)$ converges in a dense subset of $(-T, T)$, uniqueness of
  analytic continuation implies that the sequence $\{g_n\}_{n=1}^{\infty}$ converges to $g$ uniformly on each compact set.
  Then, Bochner's theorem implies that there exists a probability measure $\nu$ such that
  \begin{equation*}
    \int_{\R} e^{-it \lambda} \nu(d\lambda) = g(it)
  \end{equation*}
  and the claim follows.
\end{proof}
\begin{lemma}\label{lem:analytic_extension_of_Laplace_transform}
  Let $0 < T_1 < T_2 < \infty$, $\nu$ be a finite measure on $[0, \infty)$ with
  $\int_{[0, \infty)} e^{t\lambda} \nu(d\lambda) < \infty$ for $t < T_1$, and $g$ be an analytic function
  in $\set{z \in \C \given \Re(z) \in (0, T_2)}$.
  If the identity
  \begin{equation}\label{eq:Laplace_transform_equals_g}
    \int_{[0, \infty)} e^{t\lambda} \nu(d\lambda) = g(t)
  \end{equation}
  holds for $t \in (0, T_1)$, then it holds for $t \in (0, T_2)$.
\end{lemma}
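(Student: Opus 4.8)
The idea is to read the left-hand side of \eqref{eq:Laplace_transform_equals_g} as an analytic function on a half-plane and to invoke a Landau-type principle: the abscissa of convergence of the Laplace transform of a \emph{nonnegative} measure is a singular point of that transform. Set
\begin{equation*}
  \beta \defby \sup\set{t > 0 \given \int_{[0,\infty)} e^{t\lambda}\,\nu(d\lambda) < \infty} \in [T_1, \infty],
\end{equation*}
which satisfies $\beta \geq T_1$ by hypothesis, and put $G(z) \defby \int_{[0,\infty)} e^{z\lambda}\,\nu(d\lambda)$. First I would record the routine fact that $G$ is well-defined and analytic on $\set{z \in \C \given \Re z < \beta}$: on any compact subset of this half-plane one has $\Re z \leq t'$ for some $t' < \beta$, whence $\abs{e^{z\lambda}} \leq 1 + e^{t'\lambda}$ is $\nu$-integrable, and analyticity follows by differentiating under the integral sign (or by Morera's theorem). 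Since $\beta \geq T_1$ and $G = g$ on $(0, T_1)$ by assumption, and $(0, T_1)$ has an accumulation point, uniqueness of analytic continuation gives $G = g$ on the connected open set $\set{z \in \C \given 0 < \Re z < \min\{\beta, T_2\}}$. Thus the entire claim reduces to showing $\beta \geq T_2$.

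Suppose, for contradiction, that $0 < \beta < T_2$. Then $\beta$ lies in the interior of the strip on which $g$ is analytic, so $g$ is analytic on a disk $D(\beta, r)$ with $0 < r < \beta$; since $r < \beta$ this disk lies in $\set{\Re z > 0}$, and on the overlap $\set{\Re z < \beta} \cap D(\beta, r) = \set{0 < \Re z < \beta} \cap D(\beta, r)$ we have $G = g$ by the previous paragraph. Gluing $G$ and $g$ therefore produces a function analytic on an open neighbourhood of the real point $\beta$ that restricts to $G$ on $\set{\Re z < \beta}$. I claim this is impossible. Fix a real $s_0 \in (0, \beta)$; the extended function is then analytic on a disk $\set{\abs{z - s_0} < \rho}$ with $\rho > \beta - s_0$, so its Taylor series $G(z) = \sum_{n \geq 0} \frac{G^{(n)}(s_0)}{n!}(z - s_0)^n$ converges there, and $G^{(n)}(s_0) = \int_{[0,\infty)} \lambda^n e^{s_0\lambda}\,\nu(d\lambda) \geq 0$. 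Choosing a real $s_1$ with $\beta < s_1 < s_0 + \rho$, all summands of $\sum_n \frac{G^{(n)}(s_0)}{n!}(s_1 - s_0)^n$ are nonnegative, so Tonelli's theorem permits interchanging sum and integral:
\begin{equation*}
  \infty > G(s_1) = \int_{[0,\infty)} \sum_{n \geq 0} \frac{(\lambda(s_1 - s_0))^n}{n!}\, e^{s_0\lambda}\,\nu(d\lambda) = \int_{[0,\infty)} e^{s_1\lambda}\,\nu(d\lambda),
\end{equation*}
contradicting $s_1 > \beta$ and the definition of $\beta$.

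Hence $\beta \geq T_2$, so $G$ is analytic on $\set{z \in \C \given 0 < \Re z < T_2}$, agrees with $g$ on $(0, T_1)$, and therefore agrees with $g$ on all of $\set{z \in \C \given 0 < \Re z < T_2}$ by analytic continuation; restricting to real arguments yields \eqref{eq:Laplace_transform_equals_g} for $t \in (0, T_2)$. The only genuinely non-formal ingredient is the Landau-type step in the second paragraph --- the positivity of $\nu$ forcing a singularity at the abscissa of convergence --- and even that is classical; the remaining steps are bookkeeping with analytic continuation together with the dominated and monotone convergence theorems.
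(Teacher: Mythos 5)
Your proposal is correct and follows essentially the same route as the paper: define the abscissa of convergence, use the identity theorem to match the Laplace transform with $g$, and then exploit the nonnegativity of the Taylor coefficients $\int_{[0,\infty)}\lambda^n e^{s_0\lambda}\,\nu(d\lambda)$ together with Tonelli's theorem to push the abscissa past any supposed critical value below $T_2$, yielding a contradiction. The paper runs this Landau-type argument by Taylor-expanding $g$ directly at a point $\tau$ just below the critical abscissa, whereas you first glue $G$ and $g$ into one analytic function; this is only a difference in presentation, not in substance.
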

\begin{proof}
  Set $T \defby \sup \set{t \in (0, T_2) \given \int_{[0, \infty)} e^{t\lambda} \nu(d\lambda) < \infty}$.
  By uniqueness of analytic extension, it suffices to show $T = T_2$. Assume $T < T_2$.
  Take $\tau \in (0, T)$ and $r \in (0, \tau)$ with $\tau + r < T_2$.
  As \eqref{eq:Laplace_transform_equals_g} holds for $t \in (0, T)$, we have
  \begin{equation*}
    \int_{[0, \infty)} \lambda^k e^{\tau \lambda} \nu(d\lambda) = g^{(k)}(\tau),
  \end{equation*}
  where $g^{(k)}$ is the $k$ th derivative of $g$. Then, by Fubini-Tonelli theorem,
  \begin{equation*}
    \int_{[0, \infty)} e^{(\tau + r) \lambda} \nu(d\lambda)
    = \sum_{k=0}^{\infty} \frac{r^k}{k!} \int_{[0, \infty)} \lambda^k e^{\tau \lambda} \nu(d\lambda)
    = \sum_{k=0}^{\infty} \frac{r^k}{k!} g^{(k)}(\tau) = g(\tau + r) < \infty.
  \end{equation*}
  Since $T < \tau + r$, this contradicts the definition of $T$. Thus, $T = T_2$.
\end{proof}
\section*{Acknowledgment}
The author is very grateful to Nicolas Perkowski and Willem van Zuijlen for having valuable discussions and providing helpful comments.
The author also gratefully thanks to the anonymous referees for numerous valuable comments,
which have significantly improved the presentation of the paper.
The author gratefully acknowledges financial support by the DFG via the IRTG 2544.
\printbibliography
\end{document}